\documentclass[12pt,leqno]{amsart}
\usepackage{amsmath,amssymb,amsfonts, amscd,   
pifont, amsthm,  amsxtra, latexsym, 
eufrak}
\usepackage[all]{xy}
\usepackage{here}
\usepackage{verbatim}
\usepackage{graphicx}
\usepackage{psfrag}
\usepackage{amsmath}
\usepackage{amsthm}
\usepackage{amsfonts}
\usepackage{amssymb}
\usepackage{extarrows} 
\usepackage{colortbl} 
\usepackage{pstricks,pst-arrow} 
\usepackage{fancybox}

\numberwithin{equation}{section} %

\usepackage[colorlinks=true,linkcolor=blue,citecolor=blue,urlcolor=blue]{hyperref}

\newtheorem{theorem}{Theorem}[section]

\newtheorem{proposition}[theorem]{Proposition}
\newtheorem{lemma}[theorem]{Lemma}

\theoremstyle{definition}
\newtheorem{definition}[theorem]{Definition}
\newtheorem{example}[theorem]{Example}
\newtheorem{fact}[theorem]{Fact}

\newtheorem{setting}[theorem]{Setting}

\theoremstyle{remark}

\numberwithin{equation}{section} %

\usepackage[parfill]{parskip} 

\begin{document}
\author{Toshiyuki KOBAYASHI}
\thanks{
Graduate School of Mathematical Sciences,
The University of Tokyo,
3-8-1 Komaba, Tokyo 153-8914, Japan,
and
French-Japanese Laboratory in Mathematics and its Interactions,
FJ-LMI CNRS IRL2025, Tokyo, Japan.
\email{toshi@ms.u-tokyo.ac.jp}
}

\title
[Stability of Multiplicities in Symmetry Breaking]
{Stability of Multiplicities in Symmetry Breaking: 
The $\mathfrak{sl}_2$ Case}

\begin{abstract}
This expository paper explains, in the case of $\mathfrak{sl}_2$, the ideas introduced in the preprints (arXiv:2509.17007, 2604.22262), which develop a new framework for the study of multiplicities in branching laws of representations, with particular emphasis on their dependence on representation parameters.

Taking the Lie algebra $\mathfrak{sl}_2$ as a guiding example, we show that multiplicities, which are often computed via ad hoc, case-by-case arguments, are in fact governed by universal systems of linear inequalities.
To describe these inequalities, we introduce the notion of \emph{fences}, which encode the piecewise-linear boundaries of regions in parameter space on which multiplicities remain constant.

Within this framework, we give an explicit description of how multiplicities vary as parameters move inside \emph{reduced coherent families of representations}.
Our approach applies uniformly both to finite-dimensional representations and to admissible smooth Fr\'echet representations of real reductive Lie groups, and reveals a subtle and intrinsic interplay between the parameters of a group and those of its subgroup.

As an application of the general theory, we establish stability results and explicit formulas that clarify and unify a variety of classical phenomena, including the Pieri rule, $K$-type formulas, fusion rules, and tensor products of Verma modules.
In particular, the stability of fusion multiplicities provides a concrete manifestation of the theory.
More broadly, this framework suggests a unified approach to branching multiplicities extending beyond the $\mathfrak{sl}_2$ case.
\end{abstract}
\maketitle
\setcounter{tocdepth}{1}
\tableofcontents

\section{Introduction}
This article proposes a new perspective on multiplicities in branching laws of representations, with a particular emphasis on stability phenomena.
Our primary focus is the Lie algebra $\mathfrak{sl}_2$, where these features can be illustrated through classical examples that exhibit a common underlying structure.
In a variety of settings, multiplicities are traditionally obtained through detailed case-by-case analyses; however, as we demonstrate, they are in fact governed by universal systems of inequalities that control how multiplicities change as representation parameters vary.
Identifying and organizing these inequalities in a unified way is the main objective of the present work.

We begin by presenting four representative examples that share a common structural pattern.

\begin{enumerate}
\item[(1)] \textbf{(Pieri rule)}  
For the irreducible $(m+1)$-dimensional representations $V_m$ of $SU(2)$, the irreducible constituents in the tensor product decomposition are determined by a system of inequalities (see \eqref{eqn:Pieri}), together with the parity condition $m+n \equiv k \pmod{2}$:
\[
  [V_m \otimes V_n : V_k] \neq 0
  \iff
  |m-n| \le k \le m+n.
\]

\item[(2)] \textbf{($K$-type formula for discrete series)}  
For the discrete series representation $\Pi_{\lambda}$ of $SL(2,\mathbb{R})$, the $K$-type decomposition is governed by inequalities (see Example~\ref{ex:26031730}), together with the parity condition $\lambda \equiv n+1 \pmod{2}$:
\[
[\Pi_\lambda: \mathbb{C}_n] = 1 
\iff
n \ge \lambda + 1.
\]

\item[(3)] \textbf{(Fusion rule for discrete series (\cite{Mol79, Rep79}))}  
Let $\Pi_{\lambda_i}$, $i=1,2,3$, be discrete series representations of $SL(2,\mathbb{R})$. Then
\[
[\Pi_{\lambda_1} \otimes \Pi_{\lambda_2} : \Pi_{\lambda_3}] = 1
\iff
\lambda_3 \ge \lambda_1 + \lambda_2 + 1,
\]
together with the parity condition $\lambda_1 + \lambda_2 - \lambda_3 \in 2\mathbb{Z} + 1$.

\item[(4)] \textbf{(Tensor products of Verma modules)}  
Let $M(a)$, $M(b)$, and $M(c)$ be Verma modules over $\mathfrak{sl}(2,\mathbb{C})$. Generically, the multiplicity
\[
m(a,b,c)
=
\dim \operatorname{Hom}_{\mathfrak{g}_{\mathbb{C}}}
\bigl(M(c), M(a) \otimes M(b)\bigr)
\]
is $0$ or $1$.
However, the multiplicity jumps to $2$ precisely in cases characterized by explicit inequalities (\cite[Thm.\ 9.1]{KP16}, \cite{mura24};
 see also Example~\ref{ex:26032029}), together with the parity conditions $a,b,c \in \mathbb{Z}$ and $a+b-c \in 2\mathbb{N}$:
\[
a + b + c \le -2,
\qquad
|a-b| \le -c-2.
\]
\end{enumerate}
Rather than relying on ad hoc, case-by-case computations, our goal is to uncover a universal mechanism underlying these inequalities
and to formulate it in a representation-theoretic framework. More precisely, we formulate certain universal properties in branching laws
 in terms of infinitesimal characters,
 independent of the real form or the specific representations considered.

For infinite-dimensional representations, 
the notion of multiplicity requires a more careful formulation, as it depends on the topology of the representations. Let $G$ be a real reductive Lie group. In this article, we primarily consider the category $\mathcal{M}(G)$ of admissible smooth representations of $G$ of finite length and moderate growth, realized on Fr\'echet spaces (see, e.g., \cite[Chap.~11]{Wa92}). Let $\operatorname{Irr}(G)$ denote the set of irreducible objects in $\mathcal{M}(G)$.

Let $G \supset G'$ be a pair of real reductive groups, 
and let $\Pi \in \mathcal{M}(G)$ 
and $\pi \in \mathcal{M}(G')$.  
When $G'$ is compact, we have
\[
  \dim \operatorname{Hom}_{G'}(\pi, \Pi|_{G'})
  =
  \dim \operatorname{Hom}_{G'}(\Pi|_{G'}, \pi).
\]
However, this equality does not hold in general if $G'$ is noncompact.  

In the general setting where $G'$ is not necessarily compact, we consider
\begin{equation}
\label{eq:SBO}
  \operatorname{Hom}_{G'}(\Pi|_{G'}, \pi),
\end{equation}
the space of \emph{symmetry breaking operators}, namely, the space of $G'$-homomorphisms from $\Pi$ to $\pi$, that are continuous with respect to the Fr\'echet topologies.
Its dimension is called the \emph{multiplicity} and is denoted by
\[
   \text{$[\Pi|_{G'} : \pi]$, \quad or simply, \quad$[\Pi:\pi]$.}
\]

Our primary objective is to understand how multiplicities vary as the parameters vary when both a representation $\Pi$ of $G$ and a representation $\pi$ of its subgroup $G'$ are equipped with natural parameters. Families of representations parametrized in a compatible manner are called \emph{coherent families}. Although coherent families can be considered for $G$ and $G'$ separately, multiplicities in branching laws are governed by a subtle but highly nontrivial interaction between the parameters of the two groups, as illustrated by the four examples above. This interplay makes the problem considerably more intricate than the study of coherent families for a single group.

We give an explicit description, in the case of $\mathfrak{sl}_2$ and its direct sums, of how multiplicities change under parameter shifts. We identify relations between the parameters of the two groups that control these changes; these relations are expressed as systems of inequalities, whose boundaries we term \emph{fences} (Theorems~\ref{thm:260314_mahoroba}, \ref{thm:26031537}, and \ref{thm:26032009}). We also establish stability results for multiplicities 
 (Theorems~\ref{thm:26031818} and \ref{thm:26031806}).

This problem is naturally studied in settings where multiplicities are uniformly bounded.
Pairs of complex Lie algebras with this property were classified in \cite{xktoshima},
see also \cite{K12, xk22, xsunzhu}, and are limited to 
\[
(\mathfrak{s
l}_n, \mathfrak{gl}_{n-1})
\quad \text{ and } \quad 
(\mathfrak{o}_n, \mathfrak{o}_{n-1}).
\]
The former case has been generalized in recent preprints~\cite{HKS, xks26}, and the latter in a preprint~\cite{xk26}.

This provides a new conceptual framework for understanding branching laws beyond traditional case-by-case analysis.
 A particularly transparent manifestation of this framework appears in the study of
fusion rules for $\mathfrak{sl}_2$.
More precisely, a general translation formula for symmetry breaking operators
(Theorem~\ref{thm:26031537})
serves as a special case of our main results and leads directly to stability phenomena
for fusion multiplicities.
These stability properties are illustrated in concrete terms by two explicit examples
in Section~\ref{subsec:stability_ex},
which demonstrate how multiplicities remain constant within regions of the parameter space
bounded by finitely many linear inequalities.

\vskip 1pc
\par\noindent
{\bf{Acknowledgements.}}
\newline
The author is grateful to Professor Vladimir Dobrev for his warm hospitality 
 during the 16th International Workshop on
Lie Theory and its Applications
 in Physics, 
 held in Varna, Bulgaria, during June 16--22, 2025.  
This work was partially supported by the JSPS
 under the Grant-in Aid for Scientific Research (A) 
 (JP23H00084).  

\vskip 1pc
\par\noindent
\textbf{Convention.}\enspace
We write $\mathbb{N} := \{0,1,2,\dots\}$, 
$\mathbb{N}_+ := \{1,2,\dots\}$, and 
$\mathbb{N}_- := \{-1,-2,\dots\}$.

\section{Fundamental Examples: {${\mathfrak{sl}}(2,{\mathbb{C}}) \downarrow {\mathfrak{gl}}(1,{\mathbb{C}})$}}
To illustrate the general phenomena discussed in the introduction, 
we begin with the most elementary example of branching: 
the restriction from $\mathfrak{sl}_2$ to its Cartan subalgebra $\mathfrak{gl}_1$. 
Although this case is elementary, it already exhibits a key feature of multiplicity patterns: 
the relations governing multiplicities depend only on the complexified Lie algebra 
and the infinitesimal character, independently of the choice of real form and of whether the representations are finite- or infinite-dimensional.

This example serves as a foundation for the more detailed analyses that follow. 
At the same time, it provides a concrete and computationally transparent model 
for key concepts such as the diagonal action of the Casimir element on tensor products 
and symmetry breaking operators. 
It will also serve as a model for subsequent sections and future work, 
including the study of fusion rules for $\mathfrak{sl}_2$ and their extensions to higher-rank and infinite-dimensional settings.

\medskip
\subsection{Basic Setting
 for ${\mathfrak{sl}}(2,{\mathbb{C}}) \downarrow {\mathfrak{gl}}(1,{\mathbb{C}})$}
~~~\newline
We now formulate the problem in this simplest setting:
\[
   ({\mathfrak{g}}_{\mathbb{C}},{\mathfrak{g}}_{\mathbb{C}}')
  =({\mathfrak{sl}}(2,{\mathbb{C}}),{\mathfrak{gl}}(1,{\mathbb{C}})).
\]

We consider the standard basis of ${\mathfrak{g}}_{\mathbb{C}} = {\mathfrak{sl}}(2,{\mathbb{C}})$:
\begin{equation}
\label{eqn:sl2_tri}
  H:=\begin{pmatrix} 1 & 0 \\ 0 & -1 \end{pmatrix}, \quad
  X:=\begin{pmatrix} 0 & 1 \\ 0 & 0 \end{pmatrix}, \quad
  Y:=\begin{pmatrix} 0 & 0 \\ 1 & 0 \end{pmatrix}.
\end{equation}
Then $\{H, X, Y\}$ forms an ${\mathfrak{sl}}_2$-triple; that is,
\[
   [H,X]=2X, \quad [H, Y]=-2Y, \quad [X,Y]=H.
\]

Let ${\mathfrak{j}}_{\mathbb{C}} := {\mathbb{C}}H \subset {\mathfrak{g}}_{\mathbb{C}}$ 
be a Cartan subalgebra.  
We identify ${\mathfrak{j}}_{\mathbb{C}}^{\ast}$ with ${\mathbb{C}}$ via $\lambda \mapsto \lambda(H)$, so that the unique positive root in $\Delta({\mathfrak{g}}_{\mathbb{C}}, {\mathfrak{j}}_{\mathbb{C}})$ is $2$ and $\rho = 1$.

We use the normalization of the Casimir element $\Omega$ such that 
\[
   \Omega := H^2 + 2(XY + YX) \in U({\mathfrak{g}}_{\mathbb{C}}).
\]
Then the center ${\mathfrak{Z}}({\mathfrak{g}}_{\mathbb{C}})$
of the enveloping algebra $U({\mathfrak{g}}_{\mathbb{C}})$
is a polynomial ring generated by $\Omega$.  
We normalize the Harish-Chandra isomorphism as
\begin{equation}
\label{eqn:HC_sl2}
  \operatorname{Hom}_{{\mathbb{C}}\operatorname{-alg}}
  ({\mathfrak{Z}}({\mathfrak{g}}_{\mathbb{C}}), {\mathbb{C}})
  \simeq
  {\mathfrak{j}}_{\mathbb{C}}^{\ast}/W
  \simeq
  {\mathbb{C}}/\{\pm 1\},
\end{equation}
so that $\Omega$ acts on a module $\pi$ by the scalar $\lambda^2 - 1$
if $\pi$ has the ${\mathfrak{Z}}({\mathfrak{g}}_{\mathbb{C}})$-infinitesimal character $\lambda$.

For a $G$-module $V$ and 
\[
   \tau \in \operatorname{Hom}_{{\mathbb{C}}\operatorname{-alg}}
           ({\mathfrak{Z}}({\mathfrak{g}}_{\mathbb{C}}), {\mathbb{C}}),
\] 
let $P_{\tau}(V)$ denote the $\tau$-primary component of $V$ (in the sense of generalized eigenspaces):
\begin{equation*}
  P_{\tau}(V) := \bigcup_{k=1}^{\infty} \bigcap_{z \in {\mathfrak{Z}}({\mathfrak{g}}_{\mathbb{C}})} \operatorname{Ker}(z - \tau(z))^k.
\end{equation*}
Then we have the primary decomposition of $V \in {\mathcal{M}}(G)$:
\[
   V = \bigoplus_{\tau} P_{\tau}(V),
\]
and we denote the projection onto the $\tau$-primary component by the same letter $P_{\tau} \colon V \to V$.

We shall work in the following setting:
\begin{setting}
\label{set:sl_gl}
Let $G \supset G'$ be a pair of Lie groups 
with complexified Lie algebras
\begin{equation}
\label{eqn:sl2_gl1}
   ({\mathfrak{g}}_{\mathbb{C}}, {\mathfrak{g}}_{\mathbb{C}}')
  = ({\mathfrak{sl}}(2,{\mathbb{C}}), {\mathfrak{gl}}(1,{\mathbb{C}})),
\end{equation}
and let $\{H, X, Y\} \subset {\mathfrak{g}}_{\mathbb{C}}$ be
an ${\mathfrak{sl}}_2$-triple such that ${\mathfrak{g}}_{\mathbb{C}}' = {\mathbb{C}}H$.  

We denote by ${\mathbb{C}}_{\nu}$ the one-dimensional representation 
of ${\mathfrak{g}}_{\mathbb{C}}'$ given by 
\[
   {\mathfrak{g}}_{\mathbb{C}}' \ni t H \mapsto \nu t \in {\mathbb{C}}, 
\qquad t \in {\mathbb{R}}.
\]

For simplicity, we assume that $G'$ is connected, and use the same notation ${\mathbb{C}}_{\nu}$ for the representation lifted to $G'$.
\end{setting}

\begin{example}
\label{ex:sl_KA}
Let $G = SL(2,{\mathbb{R}})$.  

\noindent
{\rm (1)}\enspace
Let $G'$ be a maximal compact subgroup $K = SO(2)$, 
and let ${\mathbb{C}}_{\nu}$ denote the character of $K$ given by
\[
  \begin{pmatrix} \cos \theta & -\sin \theta \\ \sin \theta & \cos \theta \end{pmatrix} 
  \mapsto e^{i \nu \theta}, 
  \qquad \nu \in {\mathbb{Z}}.
\]

\noindent
{\rm (2)}\enspace
Let $G' = A$, where 
\[
A := \left\{\begin{pmatrix} e^x & 0 \\ 0 & e^{-x} \end{pmatrix} : x \in {\mathbb{R}} \right\} 
\subset G = SL(2, {\mathbb{R}}),
\]
and let ${\mathbb{C}}_{\nu}$ denote the character of $A$ defined by
\[
  \begin{pmatrix} e^x & 0 \\ 0 & e^{-x} \end{pmatrix} \mapsto e^{\nu x}, 
  \qquad \nu \in {\mathbb{C}}.
\]
\end{example}
\medskip
\subsection{Translation of Symmetry Breaking via Tensoring with ${\mathbb{C}}^2$}
\label{subsec:ts_C2}
~~~\newline
Let 
\[
f_+ := \begin{pmatrix} 1 \\ 0 \end{pmatrix}, \qquad
f_- := \begin{pmatrix} 0 \\ 1 \end{pmatrix},
\]
and write 
\[
{\mathbb{C}}^2 := {\mathbb{C}} f_+ \oplus {\mathbb{C}} f_-
\]
for the natural representation of ${\mathfrak{g}}_{\mathbb{C}}={\mathfrak{sl}}(2, {\mathbb{C}})$.

We investigate how symmetry breaking operators transform under tensor product
 with the standard representation $\mathbb{C}^2$, which provides a basic instance of translation of infinitesimal characters.

Suppose that $\Pi \in {\mathcal{M}}(G)$ has infinitesimal character 
$\lambda \in \mathbb{C}$ (with respect to the Harish-Chandra isomorphism \eqref{eqn:HC_sl2}).
 Then $\Pi \otimes {\mathbb{C}}^2$ admits the following primary decomposition
\[
\Pi \otimes {\mathbb{C}}^2 \simeq 
P_{\lambda+1}(\Pi \otimes {\mathbb{C}}^2) \oplus 
P_{\lambda-1}(\Pi \otimes {\mathbb{C}}^2).
\]

For $\delta \in \{1, -1\}$, let
\begin{equation}
\label{eqn:pr_pm}
\operatorname{pr}_{\lambda \to \lambda+\delta} \colon 
\Pi \otimes {\mathbb{C}}^2 \longrightarrow 
P_{\lambda+\delta}(\Pi \otimes {\mathbb{C}}^2)
\end{equation}
denote the natural projection.

\begin{theorem}
\label{thm:260314_mahoroba}
Assume that
\begin{equation}
\label{eqn:sl2_scalar}
\text{${\mathfrak{Z}}({\mathfrak{g}}_{\mathbb{C}})$ acts on each primary component $P_{\lambda \pm 1}(\Pi \otimes {\mathbb{C}}^2)$ by scalars.}
\end{equation}
Let $\Pi \in \mathcal{M}(G)$ have a nonsingular infinitesimal character $\lambda \neq 0$.  
Let $\varepsilon \in \{+, -\}$ and $\delta \in \{1, -1\}$.  
Let $T \colon \Pi \to {\mathbb{C}}_{\nu}$ be a symmetry breaking operator.  
Then, for every $u \in \Pi$, we have
\begin{equation}
\label{eqn:26031442}
\bigl(T \otimes \operatorname{pr}_{\varepsilon}\bigr) 
\circ \operatorname{pr}_{\lambda \to \lambda + \delta} (u \otimes f_{\varepsilon})
= \frac{\lambda + \delta(1 + \varepsilon \nu)}{2 \lambda} \,(T u) \otimes f_{\varepsilon}.
\end{equation}
\end{theorem}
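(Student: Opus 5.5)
The plan is to write the projection $\operatorname{pr}_{\lambda\to\lambda+\delta}$ explicitly as a polynomial in the Casimir element acting diagonally on $\Pi\otimes\mathbb{C}^2$, and then compute everything directly using the $\mathfrak{sl}_2$-triple. By assumption \eqref{eqn:sl2_scalar}, $\Omega$ acts on $P_{\lambda+\delta'}(\Pi\otimes\mathbb{C}^2)$ by the scalar $c_{\delta'}:=(\lambda+\delta')^2-1$ for $\delta'=\pm1$. These two scalars differ because $c_\delta-c_{-\delta}=4\delta\lambda\neq0$; this is exactly where $\lambda\neq0$ is used. Hence Lagrange interpolation gives
\[
\operatorname{pr}_{\lambda\to\lambda+\delta}=\frac{\Omega_{\mathrm{diag}}-c_{-\delta}}{4\delta\lambda}
\quad\text{on }\Pi\otimes\mathbb{C}^2 .
\]
The scalar hypothesis is essential here. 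Without it, $\Omega_{\mathrm{diag}}$ could have a nilpotent part on the primary components, and this linear formula for the projection would fail.

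Next I would expand the diagonal Casimir. Writing $\Delta$ for the coproduct,
\[
\Omega_{\mathrm{diag}}=\Omega\otimes1+1\otimes\Omega+2\,H\otimes H+4\,(X\otimes Y+Y\otimes X).
\]
On $\Pi$, $\Omega$ acts by $\lambda^2-1$. On $\mathbb{C}^2$, $\Omega$ acts by $3$, since $H^2=1$ and $XY+YX=1$ there. The triple acts on the basis by $Hf_\varepsilon=\varepsilon f_\varepsilon$, $Yf_+=f_-$, $Xf_-=f_+$ and $Xf_+=Yf_-=0$. Therefore
\[
\Omega_{\mathrm{diag}}(u\otimes f_\varepsilon)=(\lambda^2+2)\,u\otimes f_\varepsilon+2\varepsilon\,Hu\otimes f_\varepsilon+(\text{terms in }\Pi\otimes f_{-\varepsilon}).
\]

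Then I would apply $T\otimes\operatorname{pr}_\varepsilon$. This kills the cross terms coming from $X\otimes Y$ and $Y\otimes X$, because they lie in $\Pi\otimes f_{-\varepsilon}$. Since $T$ is a $\mathfrak{g}'_{\mathbb{C}}=\mathbb{C}H$-homomorphism into $\mathbb{C}_\nu$, we have $T(Hu)=\nu\,Tu$. The coefficient of $(Tu)\otimes f_\varepsilon$ is then
\[
\frac{\lambda^2+2+2\varepsilon\nu-(\lambda^2-2\delta\lambda)}{4\delta\lambda}=\frac{\delta\lambda+1+\varepsilon\nu}{2\delta\lambda}=\frac{\lambda+\delta(1+\varepsilon\nu)}{2\lambda},
\]
where the last step uses $\delta^2=1$. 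This is \eqref{eqn:26031442}.

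The only delicate point is the first step: justifying that the projection equals the interpolation polynomial in $\Omega_{\mathrm{diag}}$, which needs the scalar assumption together with $\lambda\neq0$. The rest is bookkeeping of signs and of normalizations ($\rho=1$, $\Omega=H^2+2(XY+YX)$).
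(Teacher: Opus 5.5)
Your proposal is correct and is essentially the paper's argument. The paper likewise uses the scalar hypothesis and $\lambda\neq0$ to write $\Omega-\lambda^2\pm2\lambda=\pm4\lambda\,\operatorname{pr}_{\lambda\to\lambda\pm1}$ (Lemma~\ref{lem:26031443}), expands the diagonal Casimir on $\Pi\otimes\mathbb{C}^2$ by the Leibniz rule (Lemma~\ref{lem:26031430}), and then applies $T\otimes\operatorname{pr}_{\varepsilon}$, discarding the $\Pi\otimes f_{-\varepsilon}$ terms and using $T(Hu)=\nu\,Tu$; your explicit coefficient computation fills in the final step that the paper leaves to the reader.
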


This identity \eqref{eqn:26031442} describes explicitly how symmetry breaking operators transform under shifts of the infinitesimal character.

We define the translation functor 
$\psi_{\lambda}^{\lambda \pm 1}$ by
\[
\psi_{\lambda}^{\lambda+\delta}(\Pi) := 
\operatorname{pr}_{\lambda \to \lambda+\delta} (\Pi \otimes \mathbb{C}^2).
\]

Under the assumption \eqref{eqn:sl2_scalar} and $\lambda \neq 0$, 
Theorem~\ref{thm:260314_mahoroba} immediately implies the following propagation property of multiplicities:

\begin{equation}
\label{eqn:up_down}
[\Pi : \mathbb{C}_{\nu}] \neq 0 
\;\;\Rightarrow\;\; 
[\psi_{\lambda}^{\lambda+\delta}(\Pi) : \mathbb{C}_{\nu+\varepsilon}] \neq 0,
\end{equation}
whenever $\lambda + \delta (1 + \varepsilon \nu) \neq 0$.

We now explain how Theorem~\ref{thm:260314_mahoroba} controls the multiplicities in a concrete setting involving finite-dimensional representations of $SU(2)$. In this case, assumption \eqref{eqn:sl2_scalar} is automatically satisfied due to the complete reducibility of representations of compact groups.

\begin{example}
\label{ex:26031506}
For every $n \in \mathbb{N}$, there exists a unique irreducible $(n+1)$-dimensional representation of $G = SU(2)$, denoted by $V_n$. It has $\mathfrak{Z}(G)$-infinitesimal character $n+1$.

We consider the weights of $V_n$ with respect to the maximal torus $G' := T \subset G$.
The irreducible decomposition (the Pieri rule)
\[
V_n \otimes \mathbb{C}^2 \simeq 
\begin{cases}
V_{n+1}, & \text{if } n = 0,\\
V_{n+1} \oplus V_{n-1}, & \text{if } n \ge 1,
\end{cases}
\]
implies that the family $\{V_n\}_{n \in \mathbb{N}}$ satisfies
\begin{equation}
\label{eqn:fd_coh}
\psi_{n}^{n+1}(V_{n-1}) \simeq V_{n}, \qquad
\psi_{n+1}^{n}(V_{n}) \simeq V_{n-1},
\end{equation}
for every $n \in \mathbb{N}_+$.
(This property is part of the definition of a {\emph{reduced coherent family}} of $G$-modules; see Definition~\ref{def:coherent} below.)

When $n = 0$, $V_0$ is the trivial one-dimensional representation, and hence
\begin{equation}
\label{eqn:trivial}
[V_0 : \mathbb{C}_{\nu}] \neq 0 \iff \nu = 0.
\end{equation}

By iterating \eqref{eqn:up_down}, we can determine the weights of $V_n$. Specifically,
\[
[V_n : \mathbb{C}_{\nu}] \neq 0 \iff \nu \in \{ n, n-2, \dots, -n \},
\]
for every $n \in \mathbb{N}$, starting from the trivial case \eqref{eqn:trivial} for $n = 0$.

\vskip 1pc
\noindent
$\Rightarrow$)\enspace Assume for some $n \in {\mathbb{N}}_+$ that
\[
  [V_{n-1} : \mathbb{C}_{\nu-1}] = 0 \quad
  \text{if } \nu \notin \{ n, n-2, \dots, -n \}.
\]
Applying \eqref{eqn:up_down} with $\delta=-1$, $\varepsilon=-$,
 and $\lambda =n+1$, we obtain
\[
[V_{n-1} : \mathbb{C}_{\nu-1}] = [\psi_{n+1}^{n}(V_{n}) : \mathbb{C}_{\nu-1}] = 0
\;\;\Rightarrow\;\; 
[V_{n} : \mathbb{C}_{\nu}] = 0
\]
if $(n+1)-(1-\nu) \neq 0$.  
Thus,
\[
[V_{n} : \mathbb{C}_{\nu}] = 0 \quad \text{if } \nu \notin \{ n, n-2, \dots, 2-n \} \cup \{-n\}.
\]

\vskip 1pc
\noindent
$\Leftarrow$)\enspace Assume for some $n \in {\mathbb{N}}_+$ that
\[
[V_{n-1} : \mathbb{C}_{\nu}] \neq 0 \quad \text{if } \nu \in \{ n-1, n-3, \dots, 1-n \}.
\]
Applying \eqref{eqn:up_down} with $\delta=1$, $\lambda = n$, we obtain
\[
[V_{n-1} : \mathbb{C}_{\nu}] \neq 0 \;\;\Rightarrow\;\;
[\psi_n^{n+1}(V_{n-1}) : \mathbb{C}_{\nu+\varepsilon}] = [V_{n} : \mathbb{C}_{\nu+\varepsilon}] \neq 0
\]
if $n+1 + \varepsilon \nu \neq 0$.  
Hence $V_{n}$ has weights
\begin{alignat*}{2}
& n-2, n-4, \dots, -n, && \quad \text{from the case } \varepsilon = -, \\
& n, n-2, \dots, 2-n, && \quad \text{from the case } \varepsilon = +.
\end{alignat*}
Thus the induction proceeds.
\end{example}

This example pertains to the finite-dimensional case. 
Although the result is classical, the method presented here is new: it reconstructs branching laws inductively from the trivial representation using translation. 
This method can also be applied to symmetry breaking for the infinite-dimensional representations.

\medskip
\subsection{Reduced Coherent Family of Representations}
\label{subsec:coherent_family}
~~~\newline
To extend the idea of Example~\ref{ex:26031506} 
to the infinite-dimensional setting, 
we recall the notion of a coherent family of representations.  
Although coherent families provide a powerful framework for studying wall-crossing phenomena, 
we shall only consider parameters lying within a fixed Weyl chamber.

In the framework of coherent continuation, 
the following result is particularly important 
(we do not recall the general definition here).  

\begin{fact}
\label{fact:coherent}
Let $G$ be a real linear Lie group 
contained in a connected complexified Lie group $G_{\mathbb{C}}$, 
and let $\Pi \in \operatorname{Irr}(G)$ have a nonsingular 
infinitesimal character.  
Then there exists a unique coherent family 
in $\operatorname{Irr}(G)$ passing through $\Pi$.  
\end{fact}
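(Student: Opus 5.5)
The plan is to construct the family explicitly from $\Pi$ by translation functors, and to derive uniqueness from the defining tensor-product identity of a coherent family by projecting onto primary components. Since $\lambda$ is nonsingular, we work within the open Weyl chamber $C$ containing $\lambda$; this is the setting the paper needs.

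\textbf{Construction.} Let $F_\mu$ be the finite-dimensional representation of $G_{\mathbb{C}}$, restricted to $G$, whose extreme weight is $\mu$, for each integral weight $\mu$ with $\lambda+\mu\in C$. This is where we use that $G\subset G_{\mathbb{C}}$ is linear and $G_{\mathbb{C}}$ is connected: every such $F_\mu$ is then a genuine $G$-module. Define
\[
\Theta(\lambda+\mu):=P_{\lambda+\mu}(\Pi\otimes F_\mu).
\]
The key input is Zuckerman's translation principle. For $\lambda$ and $\lambda+\mu$ in the same open chamber, the functor $P_{\lambda+\mu}(\,\cdot\,\otimes F_\mu)$ is an equivalence between the categories of modules with generalized infinitesimal characters $\lambda$ and $\lambda+\mu$, with quasi-inverse $P_\lambda(\,\cdot\,\otimes F_\mu^\ast)$. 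The reason is that the only weight $\nu$ of $F_\mu$ with $\lambda+\nu\in W(\lambda+\mu)$ is $\nu=\mu$. Hence $\Theta(\lambda+\mu)$ is irreducible and nonzero. Exactness and compatibility with the Fr\'echet globalization follow because tensoring with $F_\mu$ and taking a primary summand are both exact on $\mathcal{M}(G)$.

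\textbf{Coherence.} Next I would check that the character identity $\Theta(\gamma)\otimes F=\sum_{\nu}\Theta(\gamma+\nu)$ holds, the sum running over the weights $\nu$ of $F$ counted with multiplicity and read in the Grothendieck group. I would prove it on the dense regular set by Harish-Chandra's character formula. On a Cartan subgroup $H$, the character of $\Theta(\gamma)$ has the form $\sum_w c_w e^{w\gamma}/D$, with coefficients $c_w$ independent of $\gamma$ within $C$. This independence is exactly what the translation equivalence gives. Multiplying by $\operatorname{ch}F=\sum_\nu e^{\nu}$ and regrouping then yields the identity. Terms with $\gamma+\nu$ outside $C$ are taken to be the unique virtual characters defined by the same formula; the paper restricts to a single chamber, so it only needs the reduced version.

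\textbf{Uniqueness.} Suppose $\Theta'$ is another coherent family with $\Theta'(\lambda)=\Pi$. Apply $P_{\lambda+\mu}$ to the coherence identity $\Theta'(\lambda)\otimes F_\mu=\sum_\nu\Theta'(\lambda+\nu)$. By the same chamber argument as above, only the term $\nu=\mu$ survives, so $\Theta'(\lambda+\mu)=P_{\lambda+\mu}(\Pi\otimes F_\mu)=\Theta(\lambda+\mu)$.

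\textbf{Main obstacle.} I expect the hardest step to be the equivalence-of-categories statement, in particular the irreducibility of $\Theta(\lambda+\mu)$ when $\lambda+\mu$ lies close to a wall but still in $C$. For that I would follow Zuckerman's argument. The composite $P_\lambda\bigl(P_{\lambda+\mu}(\,\cdot\,\otimes F_\mu)\otimes F_\mu^\ast\bigr)$ is isomorphic to the identity, because the extremal-weight analysis leaves a single surviving summand. Given that the composite is the identity, exactness of both functors forces irreducibility of $\Theta(\lambda+\mu)$.
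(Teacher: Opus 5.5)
The paper does not prove this Fact. It is quoted from the theory of coherent continuation (Schmid, Zuckerman, Vogan), and afterwards only its single-chamber shadow, Definition~\ref{def:coherent}, is used. Measured against the Fact in its usual sense, your proposal has a gap exactly where the substance lies.

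\textbf{The gap off the chamber.} A coherent family is defined on all of $\lambda+\Lambda$ with values in virtual characters. Off the chamber its values are in general neither irreducible nor genuine: for $SU(2)$ and $\Pi=V_0$, i.e.\ $\lambda=1$, one has $\Theta(0)=0$ and $\Theta(-1)=-[V_0]$.

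Your sentence that out-of-chamber terms ``are taken to be the unique virtual characters defined by the same formula'' assumes that $\sum_w c_w e^{w\gamma}/D$ is the character of an element of the Grothendieck group for $\gamma\notin C$. Uniqueness of such an element is automatic, since characters are linearly independent. Its existence, however, is precisely the existence theorem.

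Classically, existence comes from writing $[\Pi]$ as a $\mathbb{Z}$-combination of standard modules. Each of these lies in an evident coherent family, by Zuckerman's tensoring identity for parabolic induction together with the coherent continuation of (limits of) discrete series. Within your own framework you could instead manufacture the missing values by translation to and from walls, certify them with your character computation, and induct over the chambers of the integral Weyl group. For the real forms of $SL(2,\mathbb{C})$ it suffices to run the recursion $\Theta(\gamma-1):=[\Theta(\gamma)\otimes\mathbb{C}^2]-\Theta(\gamma+1)$ (or its mirror image) out of the chamber.

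Your uniqueness step has the same limitation: it pins down $\Theta'$ only on $C$. Off $C$, projecting the coherence identity onto one infinitesimal character picks up several terms $\Theta'(w\gamma)$ at once. There you need the further fact that a coherent family vanishing at one regular parameter vanishes identically, e.g.\ via the character form $D\,\Theta'(\gamma)=\sum_w c_w e^{w\gamma}$ with $\gamma$-independent $c_w$.

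\textbf{Within a single chamber.} Here your argument is the standard Zuckerman translation principle, and it does give what the paper uses. Three points should be tightened.

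First, for nonintegral $\lambda$ the chamber must be taken with respect to the integral roots, not as $\{\operatorname{Re}\gamma>0\}$. For $\mathfrak{sl}_2$ and $\xi\notin\mathbb{Z}$ it is the whole coset $\xi+\mathbb{Z}=\Lambda(\xi)$. There your key lemma uses the irreducibility of $F_\mu$ essentially. If $\xi\in\tfrac12+\mathbb{Z}$, both $\gamma$ and $-\gamma$ lie in the coset, and the spurious solution $\nu=-2\gamma-\mu$ of $\gamma+\nu=-(\gamma+\mu)$ is excluded only because all weights of $F_\mu$ are congruent to $\mu$ modulo $2$.

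Second, one composite of translation functors being the identity does not by itself force $P_{\lambda+\mu}(\Pi\otimes F_\mu)$ to be irreducible. You also need the reverse translation to kill no nonzero module, i.e.\ the symmetric statement.

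Third, for the reduced definition the Harish-Chandra character detour is unnecessary. Uniqueness is forced step by step by $\Pi_{\lambda\pm1}\simeq P_{\lambda\pm1}(\Pi_\lambda\otimes\mathbb{C}^2)$. Existence reduces to the rank-one equivalence given by translation with $\mathbb{C}^2$. Its key input is that $\nu=\delta$ is the only weight of $\mathbb{C}^2$ with $\lambda+\nu\in\{\pm(\lambda+\delta)\}$, and $\nu=-\delta$ the only one with $\lambda+\delta+\nu\in\{\pm\lambda\}$. These hold precisely because $\lambda\neq0$ and $\lambda+\delta\neq0$.
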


In the present paper, 
however, 
we use only one aspect of coherent families in a special case.  
More precisely, we adopt the following simplified  definition.  

\begin{definition}[Reduced Coherent Family]
\label{def:coherent}
Let $G$ be a real form of $SL(2, \mathbb{C})$, 
and let $\Pi \in \mathcal{M}(G)$ have 
infinitesimal character $\xi \in \mathbb{C}$.  
Assume that $\xi$ is nonsingular, i.e., $\xi \neq 0$.  

Define the parameter set $\Lambda$ by 
\begin{equation}
\label{eqn:dom_xi}
\Lambda \equiv \Lambda(\xi) :=
\begin{cases}
\mathbb{Z} + \xi, & \text{if } \xi \notin \mathbb{Z},\\
\mathbb{N}_+, & \text{if } \xi \in \mathbb{N}_+,\\
\mathbb{N}_-, & \text{if } \xi \in \mathbb{N}_-.
\end{cases}
\end{equation}

A family of representations 
$\{\Pi_{\lambda} \in \mathcal{M}(G) : \lambda \in \Lambda\}$ 
is said to form a {\emph{reduced coherent family}} through $\Pi$ if the following conditions are satisfied:

\begin{itemize}
\item $\Pi_{\lambda} = \Pi$ \quad if $\lambda = \xi$;
\item $\Pi_{\lambda}$ has infinitesimal character $\lambda$;
\item For every $\delta \in \{1, -1\}$, whenever $\lambda$ and $\lambda+\delta$ both lie in $\Lambda$,
\begin{equation}
\label{eqn:disc_coh}
P_{\lambda+\delta}(\Pi_{\lambda} \otimes \mathbb{C}^2) \simeq \Pi_{\lambda+\delta}, \qquad
P_{\lambda}(\Pi_{\lambda+\delta} \otimes \mathbb{C}^2) \simeq \Pi_{\lambda}.
\end{equation}
\end{itemize}
\end{definition}

\begin{example}
Let $G = SU(2)$.  
Define $\Pi_{\lambda} := V_{\lambda-1}$ for $\lambda \in \mathbb{N}_+$.  
Then $\{\Pi_{\lambda}\}$ forms a reduced coherent family, 
where $V_m$ denotes the $(m+1)$-dimensional irreducible representation.
\end{example}

\begin{example}
\label{ex:ps_sl}
Let $G = SL(2, \mathbb{R})$, 
and let $P \subset G$ be the parabolic subgroup 
consisting of upper triangular matrices.  
For $\lambda \in \mathbb{C}$ and $\kappa \in \mathbb{Z}/2\mathbb{Z}$, 
let $\mathbb{C}_{\lambda, \kappa}$ denote 
the one-dimensional representation of $P$ defined by 
\[
  \chi_{\lambda, \kappa} \colon P \to \mathbb{C}^{\times}, 
\qquad
  \begin{pmatrix} a & b \\ 0 & -a \end{pmatrix}
  \mapsto 
  |a|^{\lambda} (\operatorname{sgn} a)^{\kappa}.  
\]
We define the normalized principal series representation of $G$ by
\[
 \Pi(\lambda, \kappa) := \operatorname{Ind}_P^G(\mathbb{C}_{\lambda+1, \kappa}).  
\]
Then, for any $\xi \in \mathbb{C} \setminus \{0\}$, 
the family
\[
   \{\Pi(\xi+m, \kappa+m) : m \in \mathbb{Z}\}
\]
forms a (reduced) coherent family. 
\end{example}

\begin{example}
\label{ex:disc}
Let $G = SL(2, \mathbb{R})$.  
Then the set of holomorphic discrete series representations
\[
  \{\Pi_{\lambda} : \lambda \in \mathbb{N}_+\}
\]
forms a reduced coherent family.  
Each representation $\Pi_{\lambda}$ has infinitesimal character $\lambda$, 
and its minimal $K$-type is given by
$\mathbb{C}_{\lambda+1} \in \widehat{K}$, where $K = SO(2)$.  
By our convention, we equip $\Pi_{\lambda}$ with the Fr{\'e}chet topology
of smooth vectors, rather than the Hilbert space topology.  
\end{example}

\medskip 
\subsection{Stability Theorem of Multiplicities in Symmetry Breaking}
\label{subsec:stability}
~~~\newline
As an application of Theorem~\ref{thm:260314_mahoroba}, 
we present a stability theorem 
for multiplicities in symmetry breaking, see \eqref{eq:SBO}.  

For subsets $\Lambda \subset \mathbb{C}$ and $H \subset \mathbb{C}$, 
we set $\Xi := \Lambda \times H$ and define
\begin{align*}
 \Xi^{\uparrow} &:= \{(\lambda, \nu) \in \Xi : 
  \operatorname{Re} \nu \ge |\operatorname{Re} \lambda| + 1 \}, 
\\
 \Xi^{\rightarrow} &:= \{(\lambda, \nu) \in \Xi : 
  \operatorname{Re} \lambda \ge |\operatorname{Re} \nu| + 1 \}, 
\\
 \Xi^{\downarrow} &:= \{(\lambda, \nu) \in \Xi : 
  \operatorname{Re} \nu \le -|\operatorname{Re} \lambda| - 1 \}, 
\\
 \Xi^{\leftarrow} &:= \{(\lambda, \nu) \in \Xi : 
  \operatorname{Re} \lambda \le -|\operatorname{Re} \nu| - 1 \}.  
\end{align*}
\begin{theorem}
[Stability of Multiplicities]
\label{thm:26031818}
Let $(G, G')$ be as in Setting~\ref{set:sl_gl}, 
 and let $\{\Pi_{\lambda}: \lambda \in \Lambda(\xi)\} \subset {\mathcal{M}}(G)$
 be the reduced coherent family 
 passing through $\Pi=\Pi_{\xi} \in {\mathcal{M}}(G)$
 with nonsingular infinitesimal character $\xi \ne 0$.

Let ${\mathbb{C}}_{\eta}$ be a character of $G'$
 for some $\eta \in {\mathbb{C}}$.

Set 
\[
  \Xi:=\Lambda(\xi) \times ({\mathbb{Z}}+\eta).  
\]
\par\noindent
{\rm{(1)}}\enspace
If $\xi-\eta \not\in 2{\mathbb{Z}}+1$, 
 then 
\[
   [\Pi_{\lambda}:{\mathbb{C}}_{\nu}]=[\Pi:{\mathbb{C}}_{\eta}]
\]
 for all $(\lambda, \nu) \in \Xi$.  
\par\noindent
{\rm{(2)}}\enspace
Suppose that $\xi-\eta \in 2{\mathbb{Z}}+1$.  
Then $(\xi, \eta) \in \Xi^s$
 for some $s \in \{\uparrow, \rightarrow, \downarrow, \leftarrow\}$.  
Moreover, 
\[
   [\Pi_{\lambda}:{\mathbb{C}}_{\nu}]
   =
   [\Pi_{\xi}:{\mathbb{C}}_{\eta}]
\]
for every $(\lambda, \nu) \in \Xi^s$.  
\end{theorem}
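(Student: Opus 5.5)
The plan is to upgrade Theorem~\ref{thm:260314_mahoroba} into a statement that multiplicities are \emph{invariant} under the diagonal moves
\[
(\lambda,\nu)\mapsto(\lambda+\delta,\nu+\varepsilon),\qquad \delta\in\{\pm1\},\ \varepsilon\in\{\pm\},
\]
as long as the move does not cross a ``fence''. Statement (1) and statement (2) should then both follow from a connectivity argument in $\Xi$.

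\textbf{Step 1 (one-step inequality).} Let $\lambda,\lambda+\delta\in\Lambda(\xi)$. Given $T\in\operatorname{Hom}_{G'}(\Pi_\lambda,\mathbb{C}_\nu)$, consider $T\otimes\operatorname{pr}_\varepsilon$. Since $\operatorname{pr}_\varepsilon\colon\mathbb{C}^2\to\mathbb{C}f_\varepsilon\simeq\mathbb{C}_{\varepsilon 1}$ is $G'$-equivariant, this is a $G'$-map $\Pi_\lambda\otimes\mathbb{C}^2\to\mathbb{C}_{\nu+\varepsilon}$. Restrict it to the summand $P_{\lambda+\delta}(\Pi_\lambda\otimes\mathbb{C}^2)\simeq\Pi_{\lambda+\delta}$, using \eqref{eqn:disc_coh}; call the result $T'$.

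Assumption \eqref{eqn:sl2_scalar} has to be checked here. Both primary components are isomorphic to members of the family, or vanish, and so carry an infinitesimal character. The case $\lambda+\delta=0$ never occurs, because $0\notin\Lambda$. Theorem~\ref{thm:260314_mahoroba} then gives
\[
T'\circ\operatorname{pr}_{\lambda\to\lambda+\delta}(u\otimes f_\varepsilon)=c\,Tu,\qquad c=\frac{\lambda+\delta(1+\varepsilon\nu)}{2\lambda}.
\]
Consequently $T\mapsto T'$ is an injective linear map whenever $c\neq0$, so that
\[
[\Pi_\lambda:\mathbb{C}_\nu]\le[\Pi_{\lambda+\delta}:\mathbb{C}_{\nu+\varepsilon}].
\]

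\textbf{Step 2 (reverse step).} Apply Step 1 to $(\lambda+\delta,\nu+\varepsilon)$ with the signs $(-\delta,-\varepsilon)$. The relevant numerator is
\[
(\lambda+\delta)-\delta\bigl(1-\varepsilon(\nu+\varepsilon)\bigr)=\lambda+\delta(1+\varepsilon\nu),
\]
which is the same quantity as before. Hence equality of the two multiplicities holds across every move with $\lambda+\delta(1+\varepsilon\nu)\neq0$. The forbidden moves are exactly those touching the fences
\[
\lambda\pm\nu=\pm1,
\]
that is, the moves between a point on one of the lines $\operatorname{Re}\lambda=\pm(\operatorname{Re}\nu\pm1)$, restricted to integral differences, and its neighbour.

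\textbf{Step 3 (combinatorics).}
\begin{itemize}
\item Fences can only occur when $\lambda\pm\nu$ is an odd integer. Diagonal moves preserve $\lambda-\nu \bmod 2\mathbb{Z}$, so when $\xi-\eta\notin2\mathbb{Z}+1$ no fence is ever met. Statement (1) then follows by connecting $(\xi,\eta)$ to any $(\lambda,\nu)$ by moves inside $\Lambda(\xi)$; for $\Lambda=\mathbb{N}_\pm$ one simply avoids stepping to $\lambda=0$.
\item When $\xi-\eta\in2\mathbb{Z}+1$, the real parts $|\operatorname{Re}\xi|$ and $|\operatorname{Re}\eta|$ differ by an odd integer, hence by at least $1$. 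This places $(\xi,\eta)$ in one of the four cones $\Xi^s$.
\item Each cone is bounded by fence lines. Any two of its points in the same parity class are joined by a staircase of diagonal moves that stays in the cone; concretely, first move along the cone's axis, then sideways. No move of such a staircase has a vanishing coefficient $c$, which gives statement (2).
\end{itemize}

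\textbf{Expected obstacle.} The main difficulty I anticipate lies in the precise bookkeeping of the parity classes. Diagonal moves generate only an index-two sublattice of $\Lambda\times(\mathbb{Z}+\eta)$, so points of $\Xi$ in the other parity class are not reached by Step 3. For these points I would need a separate argument, or an implicit restriction of $\Xi$ to the class of $(\xi,\eta)$. Should such a restriction be intended, the plan above applies verbatim. A secondary, more routine obstacle is verifying that the staircases stay inside $\Lambda(\xi)$ near the boundary $\lambda=\pm1$ in the integral case.
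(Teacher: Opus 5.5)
Your Steps 1 and 2 are the mechanism the paper intends. The theorem is stated, without further argument, as an application of Theorem~\ref{thm:260314_mahoroba}, i.e.\ of the propagation \eqref{eqn:up_down} run in both directions. Your check that the reverse move has the same numerator $\lambda+\delta(1+\varepsilon\nu)$ is right. In the coordinates $a=\lambda+\nu$, $b=\lambda-\nu$, the blocked moves are exactly those between $a=-1$ and $a=1$, or between $b=-1$ and $b=1$. The gaps are in Step 3.

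First, the parity problem you flag cannot be closed by a separate argument, because the statement is false off the class of $(\xi,\eta)$. Take $(SU(2),\mathbb{T})$ with $\Pi_\lambda=V_{\lambda-1}$. With $\xi=\eta=1$, part (1) would force $[V_0:\mathbb{C}_0]=[V_0:\mathbb{C}_1]=0$. With $\xi=1$, $\eta=0$, $s={\rightarrow}$, part (2) would force $[V_1:\mathbb{C}_0]=[V_0:\mathbb{C}_0]=1$, since $(2,0)\in\Xi^{\rightarrow}$. So $\Xi$ must be replaced by $\{(\lambda,\nu)\in\Xi:\lambda-\nu\in\xi-\eta+2\mathbb{Z}\}$, as in \S\ref{subsec:stability_f} and Theorem~\ref{thm:26031806}(1).

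Second, you track only $\lambda-\nu \bmod 2$, although Step 2 produced four fences. Moves also preserve $\lambda+\nu \bmod 2$, so the fences $\lambda+\nu=\pm1$ are met iff $\xi+\eta\in2\mathbb{Z}+1$. That is equivalent to $\xi-\eta\in2\mathbb{Z}+1$ only when $\eta\in\mathbb{Z}$. The same hidden assumption sits in your claim that $|\operatorname{Re}\xi|$ and $|\operatorname{Re}\eta|$ differ by an odd integer. For $G'=A\subset SL(2,\mathbb{R})$ and the principal series family of Example~\ref{ex:ps_sl} with $\xi=\tfrac12$, the character $\eta=-\tfrac12$ gives $\xi-\eta=1$, yet $(\xi,\eta)$ lies in none of the four $\Xi^s$.

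Hence your argument proves the theorem, on the parity class, exactly when $\eta\in\mathbb{Z}$; this is the setting of Theorem~\ref{thm:26031806} and of every compact $G'$. There each $\Xi^s$ is a quadrant in $(a,b)$, cut by the condition $\lambda\in\Lambda$, and your staircases do connect it.

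For $\eta\notin\mathbb{Z}$, the moves only give constancy on two half-planes. In case (1) with $\xi+\eta$ odd, these are $\lambda+\nu\ge1$ and $\lambda+\nu\le-1$. In case (2) they are $\lambda-\nu\ge1$ and $\lambda-\nu\le-1$. So (1) needs the extra hypothesis $\xi+\eta\notin2\mathbb{Z}+1$ (or a different argument). Part (2) must be amended, since its first assertion is false as written.

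A smaller point in Step 1 concerns $\Lambda=\mathbb{N}_\pm$ with $\lambda=\pm1$ and the move away from $0$. There the component $P_0(\Pi_\lambda\otimes\mathbb{C}^2)$ is in general neither a member of the family nor zero. For the holomorphic discrete series, $\Pi_1\otimes\mathbb{C}^2$ contains the $K$-type $1$, which $\Pi_2$ lacks. So your justification of \eqref{eqn:sl2_scalar} fails there.

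The hypothesis still holds. The Leibniz formula in the proof of Lemma~\ref{lem:26031430} gives $(\Omega-\lambda^2-2\lambda)(\Omega-\lambda^2+2\lambda)=0$ on $\Pi_\lambda\otimes\mathbb{C}^2$. Its roots are distinct because $\lambda\ne0$, so $\Omega$ acts semisimply. You need this fact, because the moves between $\lambda=\pm1$ and $\lambda=\pm2$ cannot be avoided.
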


When $\eta \in {\mathbb{Z}}$
 (for example, 
 when $G$ is a compact torus), 
we may reformulate Theorem~\ref{thm:26031818} as follows:
\begin{theorem}
[Stability of Multiplicities]
\label{thm:26031806}
Retain the setting of Theorem~\ref{thm:26031818}.  
In particular, 
 let $\Pi \in {\mathcal{M}}(G)$ have nonsingular infinitesimal character
 $\xi \in {\mathbb{C}} \setminus \{0\}$, 
 and let $\{\Pi_{\lambda}:\lambda \in \Lambda(\xi)\}$
 be the reduced coherent family passing through $\Pi$.  
Assume that $\eta \in {\mathbb{Z}}$ 
 so that the parameter set $H$ for ${\mathbb{C}}_{\nu}$
 is identified with ${\mathbb{Z}}$, 
 and set
\[
  \Xi:= \Lambda(\xi) \times {\mathbb{Z}}.  
\]
\par\noindent
{\rm{(1)}}\enspace
Suppose that $\xi \not\in {\mathbb{Z}}$.  
Then the multiplicity
\[
   [\Pi_{\lambda}:{\mathbb{C}}_{\nu}], 
\qquad 
 (\lambda, \nu) \in \Xi, 
\]
 depends only on the parity
 of $\lambda -\xi-\nu \in {\mathbb{Z}}$.  
\par\noindent
{\rm{(2)}}\enspace
Suppose that $\xi \in {\mathbb{N}}_+$.  
Then the parameter space
\[
  \Xi = \Lambda(\xi) \times {\mathbb{Z}}={\mathbb{N}}_+ \times {\mathbb{Z}}
\]
decomposes as a disjoint union 
\[
   \Xi=\Xi_{\operatorname{even}} \amalg \Xi_{\operatorname{odd}}.  
\]
according to the congruence condition 
\[
   \text{$\lambda \equiv \nu\, (\operatorname{mod} 2)$
 and $\lambda \equiv \nu+1\, (\operatorname{mod} 2)$, 
}
\]
 respectively.  
Furthermore, 
 we decompose
\[
      \Xi_{\operatorname{odd}}
     =
      \Xi_{\operatorname{odd}}^{\uparrow}
      \amalg 
      \Xi_{\operatorname{odd}}^{\rightarrow}
      \amalg
      \Xi_{\operatorname{odd}}^{\downarrow}
\]
 according to the conditions $\nu \ge \lambda+1$, 
 $\lambda \ge |\nu| +1$, and $\nu \le -\lambda-1$, 
respectively. 
Then the multiplicity $[\Pi_{\lambda}:{\mathbb{C}}_{\nu}]$ remains constant within each of the four regions
\[\Xi_{\operatorname{even}}, \quad
   \Xi_{\operatorname{odd}}^{\uparrow}, \quad
   \Xi_{\operatorname{odd}}^{\rightarrow}, \quad
   \Xi_{\operatorname{odd}}^{\downarrow}
\]
in the $(\lambda, \nu)$-plane defined above.

\end{theorem}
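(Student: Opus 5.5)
Theorem~\ref{thm:26031806} is a specialization of Theorem~\ref{thm:26031818} to the case $\eta\in\mathbb{Z}$, so the plan is to read off each part from that theorem and add only the bookkeeping needed to identify the regions. Throughout we take $\eta=0$ without loss of generality, so that $H=\mathbb{Z}+\eta=\mathbb{Z}$ and $\Xi=\Lambda(\xi)\times\mathbb{Z}$.

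\textbf{Part (1).} Suppose $\xi\notin\mathbb{Z}$. Then $\Lambda(\xi)=\mathbb{Z}+\xi$, and the parity of $\lambda-\xi-\nu\in\mathbb{Z}$ is well defined. Fix a class, and choose a base point $(\lambda_0,\nu_0)$ in it. Apply Theorem~\ref{thm:26031818} to the same coherent family, now viewed as passing through $\Pi_{\lambda_0}$; this is legitimate because $\Lambda(\lambda_0)=\Lambda(\xi)$, and we take $\eta=\nu_0$. Since $\xi\notin\mathbb{Z}$, the number $\lambda_0-\nu_0$ is not an integer, so in particular $\lambda_0-\nu_0\notin 2\mathbb{Z}+1$. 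Part~(1) of that theorem then says the multiplicity is constant on all of $\Xi$. This is stronger than the claim: it shows the multiplicity is the same for both parities.

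\textbf{Part (2).} Suppose $\xi\in\mathbb{N}_+$, so that $\Lambda(\xi)=\mathbb{N}_+$.

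The region $\Xi_{\operatorname{even}}$ is handled as in part~(1). Pick any $(\lambda_0,\nu_0)\in\Xi_{\operatorname{even}}$ and apply Theorem~\ref{thm:26031818}(1) with base $\Pi_{\lambda_0}$ and $\eta=\nu_0$. Since $\lambda_0-\nu_0$ is even, it is not in $2\mathbb{Z}+1$, and so the multiplicity on $\Xi$ equals $[\Pi_{\lambda_0}:\mathbb{C}_{\nu_0}]$ for all $(\lambda,\nu)\in\Xi$. In particular it is constant on $\Xi_{\operatorname{even}}$.

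For $\Xi_{\operatorname{odd}}$, we first check that it is the disjoint union of the three listed pieces. Take $(\lambda,\nu)$ with $\lambda\ge1$ and $\lambda-\nu$ odd. Then $|\nu|\neq\lambda$, so exactly one of the following holds:
\[
\nu\ge\lambda+1,\qquad |\nu|\le\lambda-1,\qquad \nu\le-\lambda-1.
\]
These are precisely the conditions defining $\Xi^{\uparrow}_{\operatorname{odd}}$, $\Xi^{\rightarrow}_{\operatorname{odd}}$ and $\Xi^{\downarrow}_{\operatorname{odd}}$. Since $\lambda>0$, the set $\Xi^{\leftarrow}$ does not meet $\Xi$, and for real parameters $\Xi^s\cap\Xi_{\operatorname{odd}}=\Xi^s_{\operatorname{odd}}$.

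Now fix $s\in\{\uparrow,\rightarrow,\downarrow\}$ and a base point $(\lambda_0,\nu_0)\in\Xi^s_{\operatorname{odd}}$. Apply Theorem~\ref{thm:26031818}(2) with base $\Pi_{\lambda_0}$ and $\eta=\nu_0$. Because the chambers are determined by strict inequalities, the $s$ produced there coincides with our $s$. The theorem gives
\[
[\Pi_\lambda:\mathbb{C}_\nu]=[\Pi_{\lambda_0}:\mathbb{C}_{\nu_0}]\quad\text{for all }(\lambda,\nu)\in\Xi^s.
\]
Since $\Xi^s_{\operatorname{odd}}\subset\Xi^s$, the multiplicity is constant on $\Xi^s_{\operatorname{odd}}$.

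\textbf{Main subtlety.} The one step needing care is the rebasing: Theorem~\ref{thm:26031818} is stated for the coherent family through $\Pi=\Pi_\xi$, and we apply it at other base points. This requires that the restricted family $\{\Pi_\lambda\}$ is again the reduced coherent family through $\Pi_{\lambda_0}$. That holds because $\Lambda(\lambda_0)=\Lambda(\xi)$ and the conditions in Definition~\ref{def:coherent} do not single out a base point. If one prefers to avoid rebasing, the same argument can be run directly from Theorem~\ref{thm:26031818} with base $\xi$ and arbitrary $\eta\in\mathbb{Z}$. One then connects any two points of a region by shifting $\eta$ within that region; this works because each region is a union of such shifts, and it may require checking that the regions are connected under the moves allowed there.
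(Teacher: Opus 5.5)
Your route is the paper's own: Theorem~\ref{thm:26031806} is introduced there simply as a reformulation of Theorem~\ref{thm:26031818} for $\eta\in\mathbb{Z}$, with no separate argument. Your rebasing step is harmless, since $\Lambda(\lambda_0)=\Lambda(\xi)$.

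There is, however, a genuine error. Your remark in Part~(1), that the multiplicity is ``the same for both parities'', is false, and it shows that you are invoking Theorem~\ref{thm:26031818} in a form that cannot hold. Take $G=SL(2,\mathbb{R})$, $G'=K$, and the coherent family $\Pi(\xi+m,\kappa+m)$ of Example~\ref{ex:ps_sl} with $\xi\notin\mathbb{Z}$. Since $[\Pi(\lambda,\kappa):\mathbb{C}_\nu]$ is $1$ if $\nu\equiv\kappa \pmod 2$ and $0$ otherwise, the multiplicity at $(\xi+m,\nu)$ is $1$ on one parity class of $m-\nu$ and $0$ on the other.

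The same overreach appears in Part~(2). From a base point in $\Xi_{\operatorname{even}}$ you assert constancy on all of $\Xi$. Yet for $SU(2)$ one has $[V_0:\mathbb{C}_1]=0$ while $[V_0:\mathbb{C}_0]=1$, and Example~\ref{ex:su2_so2} already contradicts this. Likewise, constancy on the whole of $\Xi^{\rightarrow}$, taken with both parities, fails at $(2,0)$ versus $(2,1)$, where the multiplicities are $0$ and $1$.

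The reason is that the only mechanism available is Theorem~\ref{thm:260314_mahoroba}, applied forward and then backward. It gives $[\Pi_\lambda:\mathbb{C}_\nu]=[\Pi_{\lambda+\delta}:\mathbb{C}_{\nu+\varepsilon}]$ for $\lambda,\lambda+\delta\in\Lambda(\xi)$ whenever $\lambda+\delta(1+\varepsilon\nu)\neq 0$; the reverse step has exactly the same obstruction. These diagonal moves preserve the parity of $\lambda-\nu$, so nothing ever links the two classes. Consequently, the set $\Xi$ in Theorem~\ref{thm:26031818} has to be understood as cut down to $(\xi,\eta)+(\mathbb{Z}^2)_{\operatorname{even}}$, as in the definition of $\Xi(\xi',\xi'')$ in Section~\ref{subsec:stability_f}. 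Read literally with the full product, as you did, it is contradicted by the examples above. That parity restriction is precisely what Theorem~\ref{thm:26031806} makes explicit.

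Your final conclusions survive, because each concerns a single parity class. But you must drop the ``stronger'' claim and either cite Theorem~\ref{thm:26031818} in its parity-restricted form or argue directly:
for $\xi\notin\mathbb{Z}$ and $\nu\in\mathbb{Z}$ the obstruction never occurs;
for $\lambda\in\mathbb{N}_+$ it reads $\lambda+\delta\varepsilon\nu=-\delta$, which forces $\lambda-\nu$ to be odd and means the step crosses one of the lines $\nu=\pm\lambda$.
It then remains to check, as your last paragraph anticipates, that each of the four regions is connected by unobstructed diagonal steps staying in $\lambda\ge 1$.
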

\begin{example}
\label{ex:su2_so2}
[Restatement of Example~\ref{ex:26031506}]
Let $(G,G')=(SU(2), {\mathbb{T}})$.  
Let $\lambda \in \mathbb{N}_+$, 
and let $\Pi_{\lambda} := V_{\lambda-1}$ be the irreducible
$\lambda$-dimensional representation of $G$. 
Then $\Xi=\mathbb N_+ \times \mathbb Z$ and
\[
  \begin{cases}
  0, & \text{if $(\lambda, \nu) \in \Xi_{\operatorname{even}} 
                     \cup \Xi_{\operatorname{odd}}^{\uparrow} 
                     \cup \Xi_{\operatorname{odd}}^{\downarrow}$},   \\
  1, & \text{if $(\lambda, \nu) \in \Xi_{\operatorname{odd}}^{\rightarrow}$}.
  \end{cases}
\]
\end{example}

The following example appears to be absent from the literature: despite the failure of general multiplicity-one results,
the multiplicity is not only uniformly bounded but identically equal to two,
independently of all parameters.

We emphasize that, in our setting, $(G,G')$ is a pair of real reductive groups
whose complexified Lie algebras are of the form
$(\mathfrak{sl}_n, \mathfrak{gl}_{n-1})$.
The general theory of uniform boundedness
(Kobayashi--Oshima~\cite{xktoshima}) still applies in this setting.
However, the multiplicity-one theorem of Sun--Zhu~\cite{xsunzhu}
does not apply, since the subgroup $A$ is connected.
\begin{example}
\label{ex:26032602}
    Let $G=SL(2, \mathbb{R})$, 
and let $\Pi(\lambda, \kappa)$ $(\lambda \in \mathbb{C}, \kappa \in {\mathbb Z}/2\mathbb Z)$
denote the principal series representation of $G$
as in Example~\ref{ex:ps_sl}. 
Let $\mathbb C_\nu$ ($\nu\in \mathbb C$) be a character of the subgroup $A\subset G$ as in Example~\ref{ex:sl_KA}.
Then 
\[
[\Pi(\lambda, \kappa):\mathbb C_\nu]=1 \quad \text{ for all } (\lambda, \kappa, \nu)
\in  \mathbb{C} \times {\mathbb Z}/2\mathbb Z \times \mathbb C.
\]
In particular, the multiplicity is independent of the parameters and no vanishing occurs.
The proof can be obtained by adapting the method of Kobayashi--Speh~\cite[Thm.~3.16]{KS15} to this special case.
\end{example}
\begin{example}
\label{ex:26031730}
Let $G=SL(2, \mathbb{R})$, 
and let $\Pi_{\lambda}$ $(\lambda \in \mathbb{N}_+)$
denote the discrete series representation of $G$
as in Example~\ref{ex:disc}.  

\par\noindent
{\rm (1)}\enspace $(G \downarrow K)$\enspace
Let $K=SO(2)$.  
Then, using only the following three $K$-types 
of $\Pi_{\lambda}$ for $\lambda = 1$:
\begin{equation}
\label{eqn:disc_1}
  [\Pi_{\lambda} : \mathbb{C}_0] = [\Pi_{\lambda} : \mathbb{C}_1] = 0, 
\qquad
  [\Pi_{\lambda} : \mathbb{C}_{\nu}] = 1,
\end{equation}
we can determine the $K$-type formula
for every $\Pi_{\lambda}$ $(\lambda \in \mathbb{N}_+)$;
that is, 
\[
     [\Pi_{\lambda} : \mathbb{C}_{\nu}]
     =
     \begin{cases}
     1, & \text{if $\nu \in \lambda+1 + 2\mathbb{N}$, } \\
     0, & \text{otherwise,}
    \end{cases}
\]
by Theorem~\ref{thm:260314_mahoroba}.
In the context of Theorem~\ref{thm:26031806},
 $\Xi= \mathbb N_+ \times \mathbb Z$ and
\[
     [\Pi_{\lambda} : \mathbb{C}_{\nu}]
     \iff (\lambda, \nu) \in \Xi_{\operatorname{odd}}^{\uparrow}.   
\]
\noindent
{\rm (2)}\enspace $(G \downarrow A)$\enspace
In contrast to the restriction $G \downarrow K$,
the following holds:
\begin{equation}
\label{eqn:26032605}
   [\Pi_{\lambda} : \mathbb{C}_{\nu}] = 1
   \qquad \text{for all $(\lambda, \nu) \in \mathbb N_+ \times \mathbb C$.}
\end{equation}
We divide the argument into two cases.

First, suppose that $\nu \notin \lambda + 2\mathbb{Z}+1$
or $|\nu|+1 > \lambda$.
Then the assertion follows from Example~\ref{ex:26032602}.
Indeed, the discrete series representation $\Pi_\lambda$
can be realized as a quotient of a principal series representation,
whose subrepresentation is finite-dimensional and, by weight considerations, does not contain $\mathbb C_\nu$ for such $\nu$ (this also follows from Example~\ref{ex:su2_so2} via Weyl's unitary trick).

Next, we consider the remaining case.
We restrict to $\Xi=\mathbb N_+ \times \mathbb Z$.
Using the special case
\[
   [\Pi_{\lambda} : \mathbb{C}_{\nu}] = 1
   \qquad \text{for $(\lambda, \nu) = (1,0)$,}
\]
and applying Theorem~\ref{thm:260314_mahoroba},
we obtain
\[
     [\Pi_{\lambda} : \mathbb{C}_{\nu}]=1
     \quad \text{for } (\lambda, \nu) \in \Xi_{\operatorname{odd}}^{\rightarrow}.
\]

Thus, in contrast to the compact case,
no vanishing occurs and the multiplicity is identically one.
\end{example}
The multiplicity also admits a natural geometric interpretation.  
We illustrate this using the $G \downarrow A$ 
case from the above example.  
For $\nu \in \mathbb{C}$, 
consider the $G$-equivariant line bundle 
\[
  \mathcal{L}_{\nu} = G \times_{A} \mathbb{C}_{\nu}
\]
over the anti-de Sitter space $G/A$.  
By Frobenius reciprocity, one has
\[
  [\Pi : \mathbb{C}_{\nu}]
  =
  \dim \operatorname{Hom}_{G}(\Pi, C^{\infty}(G/A, \mathcal{L}_{\nu})),  
\]
for every smooth admissible representation $\Pi$ of $G$
and every $\nu \in \mathbb{C}$.  
Consequently, we obtain the following result:

\begin{example}
Let $\Pi_{\lambda}$ $(\lambda \in \mathbb{N}_+)$ be the smooth representation 
 of a discrete series
representation of $G = SL(2, \mathbb{R})$.  
Then, for any character $\nu \in \mathbb{C}$, 
\[
  \dim \operatorname{Hom}_{G}(\Pi_{\lambda}, C^{\infty}(G/A, \mathcal{L}_{\nu})) = 1.  
\]
\end{example}

\medskip
\subsection{Proof of Theorem~\ref{thm:260314_mahoroba}}
~~~\newline
\par
We recall that $\Omega$ denotes the Casimir operator.  
Also, for $\delta \in \{1, -1\}$, 
let
\[
   \operatorname{pr}_{\lambda \to \lambda+\delta}
   \colon 
   \Pi \otimes \mathbb{C}^2
   \longrightarrow 
   P_{\lambda+\delta}(\Pi \otimes \mathbb{C}^2)
\]
denote the projection onto the corresponding primary component (see \eqref{eqn:pr_pm}).

Under the assumption \eqref{eqn:sl2_scalar}, 
the Casimir element $\Omega$ acts on the primary component
$P_{\lambda\pm1}(\Pi \otimes \mathbb{C}^2)$
by the scalar $(\lambda\pm1)^2 - 1 = \pm 2\lambda$.  
From this, we obtain the following lemma:

\begin{lemma}
\label{lem:26031443}
We set
\[
  \Omega_{\pm} := \Omega - \lambda^2 \pm 2\lambda.  
\]

Then we have
\[
   \Omega_{\pm} = \pm 4 \lambda \, \operatorname{pr}_{\lambda \to \lambda \pm 1}.  
\]
\end{lemma}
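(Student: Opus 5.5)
The plan is to use the primary decomposition $\Pi \otimes \mathbb{C}^2 = P_{\lambda+1}(\Pi\otimes\mathbb{C}^2) \oplus P_{\lambda-1}(\Pi\otimes\mathbb{C}^2)$ together with the assumption \eqref{eqn:sl2_scalar}. By this assumption, $\Omega$ (acting diagonally on $\Pi \otimes \mathbb{C}^2$) acts on $P_{\lambda\pm1}(\Pi\otimes\mathbb{C}^2)$ by the scalars recorded just before the lemma:
\[
(\lambda\pm1)^2-1=\lambda^2\pm 2\lambda.
\]
Hence, as an operator on $\Pi\otimes\mathbb{C}^2$,
\[
\Omega=(\lambda^2+2\lambda)\,\operatorname{pr}_{\lambda\to\lambda+1}+(\lambda^2-2\lambda)\,\operatorname{pr}_{\lambda\to\lambda-1},
\]
and the two projections sum to the identity.

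Substituting, we get
\[
\Omega_{+}=\Omega-\lambda^2+2\lambda=(\lambda^2+2\lambda-\lambda^2+2\lambda)\operatorname{pr}_{\lambda\to\lambda+1}+(\lambda^2-2\lambda-\lambda^2+2\lambda)\operatorname{pr}_{\lambda\to\lambda-1}=4\lambda\,\operatorname{pr}_{\lambda\to\lambda+1}.
\]
Similarly, $\Omega_{-}=\Omega-\lambda^2-2\lambda$ kills the $(\lambda+1)$-component and acts by $\lambda^2-2\lambda-\lambda^2-2\lambda=-4\lambda$ on the $(\lambda-1)$-component. This gives $\Omega_{-}=-4\lambda\,\operatorname{pr}_{\lambda\to\lambda-1}$.

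One point must be checked, and it is the only real content. The decomposition must consist of exactly the two summands $P_{\lambda\pm1}$, which are distinct because $\lambda\neq0$ (the nonsingular hypothesis). This is the primary decomposition stated in \S\ref{subsec:ts_C2}: the generalized infinitesimal characters of $\Pi\otimes\mathbb{C}^2$ lie in $\{\lambda\pm1\}$ by Kostant's theorem on tensoring with a finite-dimensional module. Without \eqref{eqn:sl2_scalar}, $\Omega$ would act only up to nilpotent parts, and the identity would fail.

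Note also the sign convention: $\Omega_{+}$ as defined contains the term $+2\lambda$, so it is built to annihilate the scalar $\lambda^2-2\lambda$ of the $(\lambda-1)$-component. The lemma's indexing of $\Omega_{\pm}$ with $\operatorname{pr}_{\lambda\to\lambda\pm1}$ is consistent with this.
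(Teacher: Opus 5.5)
Your proof is correct and matches the paper's argument, which deduces the lemma in one line from the fact that $\Omega$ acts on $P_{\lambda\pm1}(\Pi\otimes\mathbb{C}^2)$ by the scalar $(\lambda\pm1)^2-1=\lambda^2\pm2\lambda$ (the paper's display ``$=\pm2\lambda$'' is a slip; you use the correct value). One small correction to your closing aside: the theorem of Kostant you cite gives $\Omega_{+}\Omega_{-}=0$ on $\Pi\otimes\mathbb{C}^2$ whenever $\Omega$ acts on $\Pi$ by $\lambda^2-1$, so for $\lambda\neq0$ the hypothesis \eqref{eqn:sl2_scalar} holds automatically, and the identity cannot fail through nilpotent parts in the way you describe.
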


\begin{lemma}
\label{lem:26031430}
The diagonal action of $\Omega$ on $\Pi \otimes \mathbb{C}^2$
is given explicitly by
\[
   (\Omega - \lambda^2 - 2)(u_+ \otimes f_+ + u_- \otimes f_-)
  = 2 (H u_+ + 2 Y u_-) \otimes f_+
  + 2 (X u_+ - H u_-) \otimes f_-
\]
for all $u_+, u_- \in \Pi$.  
\end{lemma}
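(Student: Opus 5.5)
The plan is a direct computation of the diagonal Casimir on $\Pi\otimes\mathbb{C}^2$. I will expand $\Omega$ under the coproduct $Z\mapsto Z\otimes 1+1\otimes Z$ and then evaluate on the basis $f_\pm$ using the matrices \eqref{eqn:sl2_tri}. One caveat up front: carried out with those matrices, the computation gives $4Xu_+$ rather than $2Xu_+$ in the $f_-$-component. The last paragraph addresses this.

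\textbf{Expanding the diagonal Casimir.} Writing $\Delta(Z)=Z\otimes 1+1\otimes Z$, we have
\[
\Delta(H)^2=H^2\otimes 1+2H\otimes H+1\otimes H^2 .
\]
Similarly,
\[
\Delta(X)\Delta(Y)+\Delta(Y)\Delta(X)=(XY+YX)\otimes 1+1\otimes(XY+YX)+2(X\otimes Y+Y\otimes X).
\]
Hence
\[
\Delta(\Omega)=\Omega\otimes 1+1\otimes\Omega+2H\otimes H+4(X\otimes Y+Y\otimes X).
\]
On $\Pi$, $\Omega$ acts by $\lambda^2-1$. On $\mathbb{C}^2$, whose infinitesimal character is $2$, it acts by $2^2-1=3$. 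So the scalar part of $\Delta(\Omega)$ is exactly $\lambda^2+2$, and
\[
\Omega-\lambda^2-2 = 2H\otimes H+4(X\otimes Y+Y\otimes X)
\]
on $\Pi\otimes\mathbb{C}^2$.

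\textbf{Evaluating on $u_+\otimes f_++u_-\otimes f_-$.} From \eqref{eqn:sl2_tri}, the action on $\mathbb{C}^2$ is
\[
Hf_\pm=\pm f_\pm,\qquad Xf_-=f_+,\qquad Xf_+=0,\qquad Yf_+=f_-,\qquad Yf_-=0.
\]
Applying the operator above gives
\[
u_+\otimes f_+\mapsto 2Hu_+\otimes f_++4Xu_+\otimes f_-,\qquad
u_-\otimes f_-\mapsto -2Hu_-\otimes f_-+4Yu_-\otimes f_+ .
\]
Collecting terms, the $f_+$-component is $2(Hu_++2Yu_-)$, which agrees with the lemma as stated. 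The $f_-$-component is $4Xu_+-2Hu_-$.

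\textbf{The coefficient of $Xu_+$.} This is the only step that needs care. With the matrices \eqref{eqn:sl2_tri} and the normalization $\Omega=H^2+2(XY+YX)$, the coefficient of $Xu_+\otimes f_-$ comes out as $4$, not the printed $2$. Rescaling the basis of $\mathbb{C}^2$ cannot make the printed coefficient come out. The relation $[X,Y]=H$ forces the product of the scalars in $Xf_-$ and $Yf_+$ to be $1$, while the $f_+$-component $2(Hu_++2Yu_-)$ independently pins down $Yf_+=f_-$. So I would prove the identity with $f_-$-component $2(2Xu_+-Hu_-)$ and read the printed coefficient of $Xu_+$ as a misprint.

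This discrepancy is harmless for the intended application. In the proof of Theorem~\ref{thm:260314_mahoroba}, the lemma is applied only to the vectors $u\otimes f_+$ and $u\otimes f_-$, each followed by $T\otimes\operatorname{pr}_{\varepsilon}$ for the same sign $\varepsilon$. For $u\otimes f_+$ we have $u_-=0$ and take the $f_+$-component, which is $2Hu_+$. For $u\otimes f_-$ we have $u_+=0$ and take the $f_-$-component, which is $-2Hu_-$. The term $Xu_+\otimes f_-$ therefore never enters. Combining these components with Lemma~\ref{lem:26031443} and the relation $T(Hu)=\nu\,Tu$ yields \eqref{eqn:26031442}.
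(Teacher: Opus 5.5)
Your proposal is correct and follows the paper's proof: expand the diagonal Casimir by the Leibniz rule to get $\Omega-\lambda^2-2 = 2H\otimes H + 4(X\otimes Y+Y\otimes X)$ on $\Pi\otimes\mathbb{C}^2$, then evaluate with $Hf_\pm=\pm f_\pm$, $Xf_-=f_+$, $Yf_+=f_-$. Your flag on the coefficient is also right: carrying out the paper's own final step gives $4Xu_+\otimes f_-$, so the $f_-$-component should read $2(2Xu_+-Hu_-)$, and, as you note, that term is annihilated by $T\otimes\operatorname{pr}_{\varepsilon}$ in the proof of Theorem~\ref{thm:260314_mahoroba}, so nothing downstream is affected.
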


\begin{proof}
By the Leibniz rule, for any $u \in \Pi$ and $f \in \mathbb{C}^2$, we have
\[
   \Omega(u \otimes f)
   =
   (\Omega u) \otimes f
   +
   u \otimes \Omega f
   + 2(Hu \otimes Hf + 2 Xu \otimes Yf + 2 Yu \otimes Xf).
\]
Since the Casimir element $\Omega$ acts on $\Pi$ and $\mathbb{C}^2$
as scalar multiplication by $\lambda^2 - 1$ and $3$, respectively, 
we obtain
\[
  (\Omega - \lambda^2 - 2)(u \otimes f)
   =
   2 H u \otimes Hf
   + 4 (X u \otimes Y f + Y u \otimes X f).  
\]
The lemma then follows from the action of the $\mathfrak{sl}_2$ generators on the basis vectors $f_{\pm}$:
\[
   H f_{\pm} = \pm f_{\pm}, \quad
   X f_+ = Y f_- = 0, \quad
   X f_- = f_+, \quad
   Y f_+ = f_-.  
\]
\end{proof}
\medskip
\subsection{Translation of Symmetry Breaking via Tensoring with ${\mathbb{C}}^3$}
\label{subsec:ts_C3}
~~~\newline
We continue our analysis of the symmetry breaking 
$\mathfrak{sl}(2, \mathbb{C}) \downarrow \mathfrak{gl}(1, \mathbb{C})$, 
extending the results of the previous section to the tensor product of $\Pi$
with the three-dimensional irreducible representation 
$F := \mathbb{C}^3$ of $\mathfrak{g}_{\mathbb{C}} = \mathfrak{sl}(2, \mathbb{C})$.

We retain Setting~\ref{set:sl_gl}. In particular,
${\mathfrak{g}}_{\mathbb{C}}=\operatorname{span}_{\mathbb C}\{H, X, Y\}$ and ${\mathfrak{g}}_{\mathbb{C}}'=\mathbb C H$.
We realize $F$ as the adjoint representation of $\mathfrak{g}_{\mathbb{C}}$.
Under the identification $F \simeq \mathfrak{g}_{\mathbb{C}}$, the $\mathfrak{g}'$-invariant subspace $F' \subset F$ corresponds to $\mathbb{C} H \simeq \mathbb{C}$.
The projection
\[
   \mathfrak{g}_{\mathbb{C}} = \mathbb{C} X \oplus \mathbb{C} H \oplus \mathbb{C} Y
   \longrightarrow \mathbb{C} H
\]
thus induces a natural projection
\[
   \operatorname{pr}_{F\to F'} \colon F \longrightarrow \mathbb{C}.
\]
Suppose that $\Pi \in {\mathcal{M}}(G)$ has infinitesimal character 
$\lambda \in \mathbb{C}$.
Then $\Pi \otimes \mathbb{C}^3$ admits the following primary decomposition:
\[
\Pi \otimes \mathbb{C}^3 \simeq 
P_{\lambda+2}(\Pi \otimes \mathbb{C}^3) \oplus 
P_{\lambda}(\Pi \otimes \mathbb{C}^3)
\oplus 
P_{\lambda-2}(\Pi \otimes \mathbb{C}^3).
\]
For $\delta \in \{1, 0, -1\}$, let
\begin{equation}
\operatorname{pr}_{\lambda \to \lambda+2\delta} \colon 
\Pi \otimes \mathbb{C}^3 \longrightarrow 
P_{\lambda+2\delta}(\Pi \otimes \mathbb{C}^3)
\end{equation}
denote the natural projection onto the corresponding primary component.

Arguing as in Theorem~\ref{thm:260314_mahoroba} for the case of $\Pi \otimes \mathbb{C}^2$, we obtain the following.
\begin{theorem}
\label{thm:26030325}
Assume that the center $\mathfrak{Z}(\mathfrak{g}_{\mathbb{C}})$ acts by scalars on each primary component 
$P_{\lambda + 2\delta}(\Pi \otimes \mathbb{C}^3)$ for $\delta \in \{0, \pm 1\}$.

Let $\Pi \in \mathcal{M}(G)$ have infinitesimal character 
$\lambda \in \mathbb{C}$ with respect to the Harish--Chandra isomorphism \eqref{eqn:HC_sl2}, 
and let $\delta \in \{1, -1\}$ be such that 
$\lambda \notin \{0, -\delta\}$.  

Let $T \colon \Pi \to \mathbb{C}_{\nu}$
be a symmetry breaking operator.  
Then, for any $u \in \Pi$, we have
\[
  (T \otimes \operatorname{pr}_{F\to F'}) 
  \circ 
  P_{\lambda + 2 \delta}(u \otimes H)
  =
  \frac{(\lambda + \delta)^2 - \nu^2}{2 \lambda (\lambda + \delta)} \, T u.
\]
\end{theorem}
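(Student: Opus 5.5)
The plan is to imitate the proof of Theorem~\ref{thm:260314_mahoroba}: write the projection onto a primary component as a polynomial in the diagonal Casimir, and compute how $\Omega$ acts on vectors of the form $u \otimes Z$ with $Z \in F \simeq \mathfrak{g}_{\mathbb{C}}$.

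\textbf{Step 1: the projection as a polynomial in the Casimir.} By hypothesis $\mathfrak{Z}(\mathfrak{g}_{\mathbb{C}})$ acts by scalars on the three components $P_{\lambda+2\delta'}(\Pi\otimes\mathbb{C}^3)$, $\delta' \in \{0,\pm1\}$. Set
\[
A := \Omega - \lambda^2 - 7 .
\]
On $P_{\lambda+2\delta'}$ the operator $A$ acts by $a_{\delta'} := (\lambda+2\delta')^2-1-\lambda^2-7$, so
\[
a_1 = 4\lambda-4, \qquad a_0=-8, \qquad a_{-1}=-4\lambda-4 .
\]
Lagrange interpolation, in analogy with Lemma~\ref{lem:26031443}, gives
\[
P_{\lambda+2\delta} = \frac{(A-a_{0})(A-a_{-\delta})}{(a_\delta-a_0)(a_\delta-a_{-\delta})} .
\]
The denominator equals $(4\delta\lambda+4)\cdot 8\delta\lambda = 32\lambda(\lambda+\delta)$. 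It is nonzero exactly when $\lambda\notin\{0,-\delta\}$, which explains that hypothesis in the statement.

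\textbf{Step 2: the diagonal Casimir on $u\otimes Z$.} As in Lemma~\ref{lem:26031430}, the Leibniz rule gives
\[
\Omega(u\otimes Z)=\Omega u\otimes Z+u\otimes \Omega Z+2\bigl(Hu\otimes[H,Z]+2Xu\otimes[Y,Z]+2Yu\otimes[X,Z]\bigr).
\]
Here $\Omega$ acts by $8$ on the adjoint representation, so $A(u\otimes Z)$ equals the bracketed correction term. The commutation relations give
\[
A(u\otimes H)=8\,(Xu\otimes Y - Yu\otimes X),
\]
whose $H$-component is $0$. Applying $A$ once more, I only need the $H$-components. These come from the terms $YXu\otimes[X,Y]$ and $XYu\otimes[Y,X]$, so the $H$-component of $A^2(u\otimes H)$ is a constant multiple of $(XY+YX)u = \tfrac12(\Omega-H^2)u$.

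\textbf{Step 3: applying the symmetry breaking operator.} I apply $T\otimes\operatorname{pr}_{F\to F'}$ to $A^k(u\otimes H)$ for $k=0,1,2$. Because $T$ is a $\mathfrak{g}'$-homomorphism, $T(Hw)=\nu\,Tw$. Combined with $\Omega u = (\lambda^2-1)u$, this gives:
\begin{itemize}
\item $k=0$: the value $Tu$;
\item $k=1$: the value $0$;
\item $k=2$: a multiple $c\,(\lambda^2-1-\nu^2)\,Tu$.
\end{itemize}
Expanding the quadratic from Step 1 and substituting yields a numerator of the form $c(\lambda^2-1-\nu^2)+a_0a_{-\delta}$, to be divided by $32\lambda(\lambda+\delta)$. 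The target formula requires this numerator to equal $16\bigl((\lambda+\delta)^2-\nu^2\bigr)$.

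\textbf{Main obstacle.} The only delicate point is the bookkeeping of constants, namely the exact value of $c$ and the normalization of $\operatorname{pr}_{F\to F'}$ under the adjoint identification. A quick pass with my signs produced $(\lambda+2\delta)^2-1-\nu^2$ instead of $(\lambda+\delta)^2-\nu^2$ in the numerator. So I will recompute the second application of $A$ carefully, tracking the $H$-coefficients of $[X,Y]$ and $[Y,X]$ and the factor $2$ in $\Omega = H^2+2(XY+YX)$, until the numerator matches. The structure of the argument, with the denominators fixed by Step 1, does not change.
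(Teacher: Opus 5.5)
Your plan is correct and is essentially the paper's proof. The paper uses the same shift $\widetilde{\Omega}=\Omega-\lambda^2-7$ and the same quadratic in it; it normalizes that quadratic as $\phi_\lambda^{\lambda+2\delta}=\tfrac12\lambda(\lambda+\delta)\operatorname{pr}_{\lambda\to\lambda+2\delta}$ rather than in your Lagrange form. It also uses the same two facts: the $H$-components of $\widetilde{\Omega}(u\otimes H)$ and $\widetilde{\Omega}^2(u\otimes H)$ are $0$ and $16(\lambda^2-1-H^2)u$.

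That second value settles your open constant. Continuing your Step 2, the $H$-component of $A^2(u\otimes H)$ is $32(XY+YX)u=16(\Omega-H^2)u$, so $c=16$. Together with $a_0a_{-\delta}=32(1+\delta\lambda)$, the numerator is exactly $16\bigl((\lambda+\delta)^2-\nu^2\bigr)$, so the discrepancy in your quick pass was only an arithmetic slip.
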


\begin{proof}[Proof of Theorem~\ref{thm:26030325}]
The argument follows that of Theorem~\ref{thm:260314_mahoroba}, and we only sketch the main steps.

Define a polynomial in the Casimir operator $\Omega$ by
\[
  \phi_{\lambda}^{\lambda+2\delta} 
  := \frac{1}{64} (\Omega - \lambda^2 + 1)(\Omega - (\lambda - 2\delta)^2 + 1).
\]
Then $\phi_{\lambda}^{\lambda+2\delta}$ acts by the scalar 
$\frac{1}{2} \lambda (\lambda + \delta)$ on the primary component
$P_{\lambda+2\delta}(\Pi)$.  
Hence,
\[
   \phi_{\lambda}^{\lambda+2\delta} 
   = \frac{1}{2} \lambda (\lambda + \delta) \, \operatorname{pr}_{\lambda \to \lambda+2\delta}.
\]

Set $\widetilde{\Omega} := \Omega - \lambda^2 - 7$.  

A direct computation shows
\begin{align}
\label{eqn:26030320}
  (\operatorname{id} \otimes \operatorname{pr}_{F \to F'}) \, \widetilde{\Omega} (u \otimes H)
  &= 0,\\
\label{eqn:26030321}
  (\operatorname{id} \otimes \operatorname{pr}_{F \to F'}) \, \widetilde{\Omega}^2 (u \otimes H)
  &= 16 (\lambda^2 - 1) u \otimes H - 16 H^2 u \otimes H.
\end{align}

Writing
\[
   \phi_{\lambda}^{\lambda+2\delta} 
   = \frac{1}{64} (\widetilde{\Omega} + 8)(\widetilde{\Omega} + 4(1 + \delta \lambda)),
\]
we obtain
\[
  (\operatorname{id} \otimes \operatorname{pr}_{F \to F'}) \circ 
  \phi_{\lambda}^{\lambda+2\delta} (u \otimes H)
  = \frac{1}{4} (\lambda + \delta)^2 u \otimes H 
  - \frac{1}{4} H^2 u \otimes H.
\]

Applying the symmetry breaking operator $T$, we obtain
\[
  (T \otimes \operatorname{pr}_{F \to F'}) \circ 
  \phi_{\lambda}^{\lambda+2\delta} (u \otimes H)
  = \frac{1}{4} \bigl( (\lambda + \delta)^2 - \nu^2 \bigr) \, T u \otimes H.
\]
\end{proof}

\section{Symmetry Breaking for $\mathfrak{sl}(2,{\mathbb{C}}) \oplus \mathfrak{sl}(2,{\mathbb{C}}) \downarrow \mathfrak{sl}(2, {\mathbb{C}})$}
We now extend the translation principle for symmetry breaking operators to the setting of a pair $\widetilde G \supset G$  of real reductive groups 
whose complexified Lie algebras satisfy
\[
   (\widetilde{\mathfrak{g}}_{\mathbb{C}}, {\mathfrak{g}}_{\mathbb{C}})
   \simeq
   ({\mathfrak{sl}}(2, {\mathbb{C}}) \oplus {\mathfrak{sl}}(2, {\mathbb{C}}), 
    {\mathfrak{sl}}(2, {\mathbb{C}})).
\]

This framework naturally encompasses the fusion rules
 for tensor products of representations
 of ${\mathfrak{sl}}_2$, 
 and provides a uniform mechanism for describing
 how the associated multiplicities vary 
under translation 
 of infinitesimal characters.

Typical examples of such pairs $(\widetilde G, G)$ include the following,
covering compact, noncompact, and complex cases:
\begin{align*}
&(SU(2) \times SU(2), SU(2)),\\
&(SL(2, {\mathbb{R}}) \times SL(2, {\mathbb{R}}), SL(2, {\mathbb{R}})),\\
&(SL(2, {\mathbb{C}}), SU(2)),\\
&(SL(2, {\mathbb{C}}), SL(2, {\mathbb{R}})),\\
&(O(2,2), O(2,1)).
\end{align*}
The first two cases correspond precisely to the fusion rules
for tensor products of representations of $G$,
and serve as a guiding example for the general theory developed below.

Although the main results apply uniformly to all these settings,
we first present them in a form tailored to the study of fusion rules
(see Theorems~\ref{thm:26031537} and \ref{thm:26032032}).

In Section~\ref{sec:o4_o3}, 
we then reinterpret these results in a different geometric guise via the isomorphism
\[
  ({\mathfrak{sl}}(2, {\mathbb{C}}) \oplus {\mathfrak{sl}}(2,{\mathbb{C}}), 
   \operatorname{diag}({\mathfrak{sl}}(2, {\mathbb{C}})))
  \simeq
  ({\mathfrak{o}}(4, {\mathbb{C}}), {\mathfrak{o}}(3, {\mathbb{C}})),
\]
leading to Theorem~\ref{thm:26032009}.
\medskip
\subsection{Translating Fusion Rules}
~~~\newline
In this subsection, 
 we introduce a {\emph{translation principle}}
 for fusion rules
 associated with a simple Lie group $G$
 where complexified Lie algebra satisfies
 ${\mathfrak{g}}_{\mathbb{C}} \simeq {\mathfrak{sl}}(2,{\mathbb{C}})$.  
Our aim is to describe explicitly 
 how the multiplicities depend 
 on the parameters
 of the representations, 
 and how they transform
under translations
 of infinitesimal characters.

Explicit branching laws in this setting
 are classical in several important cases.
For finite-dimensional representations of $SU(2)$, they are given by the Pieri formula;
for irreducible unitary representations, they were determined independently 
by Molchanov \cite{Mol79} and Repka \cite{Rep79}.  
More recently, Kobayashi--Pevzner \cite{KP16} treated the fusion rules
 for non-unitary holomorphically induced representations, including negative parameters, 
and Clerc \cite{xCl17} studied non-unitary principal series representations.

In the non-unitary case, however,
 multiplicities may jump at certain parameter values,
 which highlights the need for a mechanism that controls these discontinuities.

The translation principle for fusion rules
 developed below provides an explicit and uniform criterion 
guaranteeing the stability of multiplicities, 
 culminating in Theorem~\ref{thm:26032032}.  

Let $\Pi'$ and $\Pi'' \in {\mathcal{M}}(G)$ have
${\mathfrak{Z}}(G)$-infinitesimal characters $\lambda'$ and $\lambda'' \in {\mathbb{C}}$, respectively.

As in \eqref{eqn:pr_pm}, let
\begin{equation}
\label{eqn:pr_pm_Pi}
 \operatorname{pr}_{\pm} \colon \Pi_{\lambda} \otimes {\mathbb{C}}^2
\to P_{\lambda\pm1}(\Pi_{\lambda} \otimes {\mathbb{C}}^2)
\end{equation}
denote the projections onto the corresponding primary components
for $\lambda=\lambda'$ and $\lambda''$.  

For $\delta, \varepsilon \in \{+, -\} \equiv \{1, -1\}$, 
we introduce the four projections
\[
   \operatorname{pr}_{\delta \varepsilon} \colon 
   (\Pi' \otimes {\mathbb{C}}^2) 
     \boxtimes 
   (\Pi'' \otimes {\mathbb{C}}^2)
    \to 
    P_{\lambda' + \delta}(\Pi' \otimes {\mathbb{C}}^2)
    \boxtimes
    P_{\lambda''+\varepsilon}(\Pi'' \otimes {\mathbb{C}}^2).
\]

Next, consider the projection $\operatorname{pr}_{F \to F'}$
associated with the fusion rule of the finite-dimensional representation 
of ${\mathfrak{sl}}(2,{\mathbb{C}})$:
\[
  F := {\mathbb{C}}^2 \otimes {\mathbb{C}}^2 
    \simeq {\mathbb{C}}^3 \oplus {\mathbb{C}}.
\]

Let $\{e_+, e_-\}$ and $\{f_+, f_-\}$
be the standard bases of the two copies of ${\mathbb{C}}^2$,  
so that 
\[
    {\mathbb{C}}^2 \otimes {\mathbb{C}}^2
    =
    \operatorname{Span}_{{\mathbb{C}}} \{e_+, e_-\}
    \otimes
    \operatorname{Span}_{{\mathbb{C}}} \{f_+, f_-\}.  
\]

Then the projection onto the trivial one-dimensional subrepresentation
\begin{equation}
\label{eqn:F_ef}
  F' := {\mathbb{C}} (e_+ \otimes f_- - e_- \otimes f_+) \simeq {\mathbb{C}}
\end{equation}
is given explicitly by 
\begin{equation}
\label{eqn:26022608}
   (a e_+ + b e_-) \otimes (c f_+ + d f_-) 
   \mapsto 
   \frac 1 2(ad-bc) (e_+ \otimes f_- -e_- \otimes f_+).   
\end{equation}
Finally, we define a linear map
\[
   H\colon \Pi' \otimes \Pi''
\to (\Pi' \otimes {\mathbb{C}}^2)
    \boxtimes
    (\Pi'' \otimes {\mathbb{C}}^2),
\] 
which realizes the embedding of $\Pi'\otimes\Pi''$ into the
$G$-submodule corresponding to the trivial component
$F' \simeq \mathbb C \subset \mathbb C^2 \otimes \mathbb C^2$
(i.e., the antisymmetric tensor component)
under the diagonal action.
Explicitly,
\begin{equation}
\label{eqn:H_uv}
   H(u,v) := 
   (u \otimes e_+) \otimes (v \otimes f_-) - 
   (u \otimes e_-) \otimes (v \otimes f_+),
\end{equation}
for $u \in \Pi'$ and $v \in \Pi''$.

With these definitions in place, 
the {\emph{translation of symmetry breaking}} from the pair of parameters
$(\lambda', \lambda'')$ to $(\lambda' + \delta, \lambda''+ \varepsilon)$
is explicitly controlled on the image of the map $H$, 
leading to the following foundational identity.
\begin{theorem}
\label{thm:26031537}
Let $G$ be a real reductive Lie group whose complexified Lie algebra satisfies
\[
\mathfrak g_{\mathbb C} \simeq \mathfrak{sl}(2,\mathbb C).
\]
Then there exist rational functions
\[
c_{\delta\varepsilon}(\lambda',\lambda'';\lambda'''),
\qquad (\delta,\varepsilon\in\{+,-\}),
\]
depending only on the parameters $\lambda',\lambda'',\lambda'''$, such that the
following holds.

Let $\Pi'$, $\Pi''$, and $\Pi'''$ be representations in $\mathcal M(G)$ with
infinitesimal characters $\lambda'$, $\lambda''$, and $\lambda'''$, respectively.
Assume that $\lambda',\lambda''\neq 0$, and that the primary components of
$\Pi'\otimes\mathbb C^2$ and $\Pi''\otimes\mathbb C^2$ are genuine eigenspaces of
$\mathfrak Z(\mathfrak g_{\mathbb C})$.

Let
\[
T\colon \Pi'\boxtimes \Pi'' \longrightarrow \Pi'''
\]
be a symmetry breaking operator, and let $\delta,\varepsilon\in\{+,-\}$.
Then, for every $u\in\Pi'$ and $v\in\Pi''$, we have
\begin{equation}
\label{eqn:26031537}
\bigl(T\otimes \operatorname{pr}_{F\to F'}\bigr)
\circ
\operatorname{pr}_{\delta\varepsilon}\bigl(H(u,v)\bigr)
=
c_{\delta\varepsilon}(\lambda',\lambda'';\lambda''')\, T(u\otimes v).
\end{equation}

Moreover, the functions $c_{\delta\varepsilon}(\lambda',\lambda'';\lambda''')$
are universal in the sense that they depend only on the infinitesimal characters
$\lambda'$, $\lambda''$, and $\lambda'''$, and are independent of the choice of
the real form $G$ and of the representations $\Pi'$, $\Pi''$, and $\Pi'''$.
Explicitly, they are given by
\begin{equation}
\label{eqn:c_formula}
c_{\delta\varepsilon}(\lambda',\lambda'';\lambda''')
=
\frac{
(\delta\lambda' + \varepsilon\lambda'' + 1 + \lambda''')
(\delta\lambda' + \varepsilon\lambda'' + 1 - \lambda''')
}{
8\,\lambda'\lambda''}.
\end{equation}
\end{theorem}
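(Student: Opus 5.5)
The strategy mirrors the proof of Theorem~\ref{thm:260314_mahoroba}. First I express each projection as a polynomial in a Casimir operator, then compute its action explicitly, and finally exploit the $G$-equivariance of $T$.

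\textbf{Step 1: projections as Casimir operators.} By Lemma~\ref{lem:26031443}, applied separately to $\Pi'\otimes\mathbb C^2$ and $\Pi''\otimes\mathbb C^2$, we have
\[
\operatorname{pr}_{\delta\varepsilon}=\frac{1}{16\,\delta\varepsilon\,\lambda'\lambda''}\,\Omega'_{\delta}\,\Omega''_{\varepsilon},
\]
where $\Omega'$ (respectively $\Omega''$) is the Casimir of the first (respectively second) factor $\mathfrak{sl}_2$ acting diagonally on $\Pi'\otimes\mathbb C^2$ (respectively $\Pi''\otimes\mathbb C^2$). Here $\Omega'_{\pm}=\Omega'-\lambda'^2\pm2\lambda'$, and similarly for $\Omega''_{\pm}$. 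This already accounts for the denominator $8\lambda'\lambda''$, up to the factor $1/2$ coming from \eqref{eqn:26022608}.

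\textbf{Step 2: explicit action on $H(u,v)$.} Write $A':=\Omega'-\lambda'^2-2$ and $A'':=\Omega''-\lambda''^2-2$. Then
\[
\Omega'_\delta\Omega''_\varepsilon=\bigl(A'+2+2\delta\lambda'\bigr)\bigl(A''+2+2\varepsilon\lambda''\bigr).
\]
Lemma~\ref{lem:26031430} gives $A'$ and $A''$ explicitly on $u\otimes e_\pm$ and $v\otimes f_\pm$. Applying the projection \eqref{eqn:26022608} to the $\mathbb C^2\otimes\mathbb C^2$ factor, I compute the four terms as follows.
\begin{itemize}
\item The constant term gives $2\cdot\tfrac12(u\otimes v)$, since $H(u,v)$ projects to $u\otimes v$ times the generator $e_+\otimes f_--e_-\otimes f_+$ of $F'$, with coefficient $\tfrac12(1\cdot1-(-1)(-1))$ suitably normalized.
\item The linear terms $A'H(u,v)$ and $A''H(u,v)$ should project to multiples of $Hu\otimes v$ and $u\otimes Hv$. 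I expect their contributions to cancel, in analogy with \eqref{eqn:26030320}, or to combine into the diagonal expression below.
\item The cross term $A'A''H(u,v)$ should project to an expression of the form
\[
c\,\bigl(Hu\otimes Hv+2Xu\otimes Yv+2Yu\otimes Xv\bigr),
\]
which equals $\tfrac{c}{2}\bigl(\Omega_{\rm diag}-\Omega'-\Omega''\bigr)$ acting on $u\otimes v\in\Pi'\otimes\Pi''$, where $\Omega_{\rm diag}$ is the Casimir of the diagonal action.
\end{itemize}

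\textbf{Step 3: applying $T$.} Since $T$ is a $G$-homomorphism into $\Pi'''$, we have
\[
T\circ\Omega_{\rm diag}=(\lambda'''^2-1)\,T,
\]
while $\Omega'$ and $\Omega''$ act on $\Pi'$ and $\Pi''$ by $\lambda'^2-1$ and $\lambda''^2-1$. Hence the whole expression becomes a scalar multiple of $T(u\otimes v)$, and that scalar is a polynomial in $\lambda',\lambda'',\lambda'''$ of degree $2$. The final check is that this polynomial factors as
\[
\bigl(\delta\lambda'+\varepsilon\lambda''+1\bigr)^2-\lambda'''^2 .
\]
Heuristically this is forced: the coefficient must vanish when $\lambda'''=\pm(\delta\lambda'+\varepsilon\lambda''+1)$, which is the expected wall for the translated triple. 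The scalar depends only on the infinitesimal characters, which gives the universality assertion.

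\textbf{Main obstacle.} The hardest part is the bookkeeping in Step~2: getting the signs right in Lemma~\ref{lem:26031430} for both factors, and checking that the linear terms combine correctly with the constants. I would first verify the computation on the generic test case of finite-dimensional $SU(2)$ Pieri data, where $c_{\delta\varepsilon}$ must vanish exactly on the Pieri boundary. This fixes the normalization before I carry out the general computation.
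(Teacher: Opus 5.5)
Your route is viable and genuinely different from the paper's. As written, however, Step~2 is a forecast rather than a computation, and your own computation does not produce \eqref{eqn:c_formula}.

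\textbf{Step 2, carried out.} Do not use Lemma~\ref{lem:26031430} as printed. Since $Yf_+=f_-$, the term $4Xu\otimes Yf$ in its proof gives $(\Omega-\lambda^2-2)(u\otimes f_+)=2Hu\otimes f_++4Xu\otimes f_-$, so the $f_-$-coefficient should read $2(2Xu_+-Hu_-)$. With the printed coefficient the cross term comes out as $-4(Hu\otimes Hv+Xu\otimes Yv+Yu\otimes Xv)$. That expression is not $\mathfrak g$-invariant, so $T$ does not reduce it to a scalar and Step~3 fails.

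With the correct action, let $\Phi$ denote $\operatorname{id}\otimes\operatorname{pr}_{F\to F'}$ followed by $e_+\otimes f_--e_-\otimes f_+\mapsto 1$. Then:
\begin{itemize}
\item $\Phi(H(u,v))=u\otimes v$, because $\operatorname{pr}_{F\to F'}$ fixes that generator. (Your expression $\tfrac12(1\cdot1-(-1)(-1))$ equals $0$.)
\item The linear terms vanish exactly: $\Phi(A'H(u,v))=Hu\otimes v-Hu\otimes v=0$, and likewise for $A''$.
\item The cross term is
\[
\Phi\bigl(A'A''H(u,v)\bigr)
=-4\bigl(Hu\otimes Hv+2Xu\otimes Yv+2Yu\otimes Xv\bigr)
=-2\bigl(\Omega_{\mathrm{diag}}-\Omega\otimes 1-1\otimes\Omega\bigr)(u\otimes v).
\]
\end{itemize}
Apply $T$ and add the constant term $(2+2\delta\lambda')(2+2\varepsilon\lambda'')\,T(u\otimes v)$. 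The total is $2\bigl((\delta\lambda'+\varepsilon\lambda''+1)^2-\lambda'''^2\bigr)\,T(u\otimes v)$, so the factorization you anticipated holds without any heuristic.

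\textbf{The sign.} Now divide by your Step~1 prefactor $16\,\delta\varepsilon\,\lambda'\lambda''$:
\[
c_{\delta\varepsilon}(\lambda',\lambda'';\lambda''')
=\frac{(\delta\lambda'+\varepsilon\lambda''+1+\lambda''')(\delta\lambda'+\varepsilon\lambda''+1-\lambda''')}{8\,\delta\varepsilon\,\lambda'\lambda''}.
\]
Your claim that Step~1 ``accounts for the denominator $8\lambda'\lambda''$'' silently drops the factor $\delta\varepsilon$. For $(+,-)$ and $(-,+)$ you actually obtain the negative of \eqref{eqn:c_formula}.

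That negative is the correct answer; the displayed formula is misprinted. Two checks confirm this:
\begin{itemize}
\item The paper's own $g_{+-}(m,n;k)=(n-m+k)(m-n+k+2)$ equals $-(\lambda'-\lambda''+1+\lambda''')(\lambda'-\lambda''+1-\lambda''')$ under \eqref{eqn:lmd_mnk}. For $(m,n,k)=(1,2,1)$ it gives $c_{+-}=1/12$, while \eqref{eqn:c_formula} gives $-1/12$.
\item Since $\sum_{\delta,\varepsilon}\operatorname{pr}_{\delta\varepsilon}=\operatorname{id}$ and $\Phi(H(u,v))=u\otimes v$, one needs $\sum_{\delta,\varepsilon}c_{\delta\varepsilon}=1$. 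The $\delta\varepsilon$-version satisfies this identically, while the printed one sums to $(\lambda'^2+\lambda''^2+1-\lambda'''^2)/(2\lambda'\lambda'')$.
\end{itemize}
Your proposed Pieri sanity check only sees the zero set, so it cannot detect this sign; the sum rule can. Your argument therefore proves the theorem with the corrected constant, not the statement as displayed.

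\textbf{Comparison.} The paper does not compute directly. It takes the existence of universal rational $c_{\delta\varepsilon}$ from a forthcoming general result. It then pins them down on finite-dimensional triples $V_m\otimes V_n\to V_k$, using the Rankin--Cohen-type operator $R^k_{m,n}$, the explicit projections of Lemma~\ref{lem:25050109}, and Zariski density of Pieri triples.

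Your Casimir computation works uniformly for arbitrary $\Pi',\Pi'',\Pi'''$. It uses exactly the stated hypotheses: genuine eigenspaces and $\lambda'\lambda''\neq0$ in Step~1, and the three infinitesimal characters in Step~3. It yields existence, universality and the explicit formula at once. It needs neither the external existence result nor the density argument, which presupposes that result.

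The paper's route is the one designed to scale to higher-rank pairs, where only the existence statement is structural and direct Casimir algebra becomes unwieldy. It also ties the coefficients to the classical Pieri rule.
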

\medskip
\subsection{Outline of Proof}
~~~\newline
In a forthcoming paper \cite{xk26}, we prove the existence
 of such a rational function 
for more general pairs of Lie algebras 
$(\mathfrak{g}_{\mathbb C},\mathfrak{g}'_{\mathbb C})$, 
in the spirit of Theorems~\ref{thm:260314_mahoroba} and \ref{thm:26030325}.

The main task here is therefore to determine the rational function 
$c_{\delta\varepsilon}(\lambda',\lambda'';\lambda''')$. 

We first reduce the problem to the compact case, 
where explicit computations of tensor product decompositions are available. 

Since a rational function in the parameters 
$(\lambda',\lambda'',\lambda''')$ 
is determined by its values on a Zariski dense subset, 
it suffices to compute $c_{\delta\varepsilon}$ 
on a convenient Zariski dense set of parameters. 

The computation is carried out in the following subsections 
using only the finite-dimensional representation theory of compact groups.

\medskip
\subsection{Analogue of Rankin--Cohen Brackets in Finite-Dimensional Representations}
~~~\newline

Let $m \in \mathbb{N}$, and set
\[
V_m := \operatorname{Pol}_m[z] \simeq \mathbb{C}^{m+1},
\]
the space of polynomials in $z$ of degree at most $m$. 
This space carries a natural action of $SL(2,\mathbb{C})$ defined by
\[
  (\varpi_{-m}(g)F)(z) = (cz+d)^{m} F\!\left(\frac{az+b}{cz+d}\right),
  \qquad \text{for } g^{-1} = \begin{pmatrix} a & b \\ c & d \end{pmatrix}.
\]

The Pieri rule for tensor products implies that
\begin{equation}
\label{eqn:Pieri}
  [V_m \otimes V_n : V_k] \neq 0
  \quad \text{if and only if} \quad
  k = m+n-2\ell
\end{equation}
for some integer $\ell$ with $0 \le \ell \le \min(m,n)$.

To determine the rational function $c_{\delta \varepsilon}(\lambda_1, \lambda_2; \nu)$ in Theorem~\ref{thm:26031537}, 
we work with finite-dimensional representations. 
For this purpose, 
we provide an explicit symmetry breaking operator 
that will allow us to compute the coefficients 
$c_{\delta\varepsilon}(\lambda',\lambda'';\lambda''')$ explicitly.

\begin{proposition}
\label{prop:RC_fd}
Let $k = m + n - 2\ell$ with $0 \le \ell \le \min(m, n)$. 
Define
\[
  R_{m,n}^k \colon V_m \otimes V_n \to V_k
\]
by
\begin{align}
\label{eqn:25050114}
  R_{m,n}^k 
  := & \, \operatorname{Rest}_{z=w} \circ 
  \sum_{j=0}^{\ell} (-1)^{\ell-j} 
  \frac{(m+j-\ell)! (n-j)!}{j!(m-\ell)! (\ell-j)! (n-\ell)!} 
  \frac{\partial^{\ell}}{\partial z^{\ell-j} \partial w^j} \\
\notag
  = & \, \operatorname{Rest}_{z=w} \circ 
    \frac{(-1)^{\ell}n!}{\ell !(n-\ell)!}
  \left(\frac{\partial^{\ell}}{\partial z^{\ell}} 
        - \frac{(m+1-\ell)\ell}{n} 
        \frac{\partial^{\ell}}{\partial z^{\ell-1} \partial w} + \cdots \right).
\end{align}
Then $R_{m,n}^k$ is a nonzero symmetry breaking operator.
\end{proposition}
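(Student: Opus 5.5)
To prove Proposition~\ref{prop:RC_fd}, we check equivariance infinitesimally, using the generators of the Lie algebra $\mathfrak{sl}(2,\mathbb{C})$. The group $SL(2,\mathbb{C})$ is connected, and every space involved is finite-dimensional, so $\mathfrak{sl}_2$-equivariance is enough.

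First we write the differential of $\varpi_{-m}$ on $V_m=\operatorname{Pol}_m[z]$. Up to an overall sign convention, the triple acts by
\[
d\varpi(X)= -\tfrac{\partial}{\partial z},\qquad d\varpi(H)= -2z\tfrac{\partial}{\partial z}+m,\qquad d\varpi(Y)= z^2\tfrac{\partial}{\partial z}-mz.
\]
On $V_m\otimes V_n\simeq \operatorname{Pol}_{m,n}[z,w]$ the diagonal action is the sum of these operators in $z$ (weight $m$) and in $w$ (weight $n$). Write $D=\sum_j a_j\,\partial_z^{\ell-j}\partial_w^{j}$ with the coefficients $a_j$ given in \eqref{eqn:25050114}. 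We must show that $\operatorname{Rest}_{z=w}\circ D$ intertwines the diagonal action with $d\varpi$ of weight $k=m+n-2\ell$ on $V_k$.

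For $X$ this is immediate. Constant-coefficient operators commute with $\partial_z+\partial_w$, and the chain rule gives $\operatorname{Rest}_{z=w}\circ(\partial_z+\partial_w)=\tfrac{d}{dz}\circ\operatorname{Rest}_{z=w}$.

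For $H$, the operator $D$ is homogeneous of degree $-\ell$. The Euler operators $z\partial_z+w\partial_w$ therefore shift by $\ell$ across $D$, and after restriction the constant $m+n$ becomes $m+n-2\ell=k$.

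The real content is the $Y$-equivariance. Its verification amounts to the standard Rankin--Cohen type identity, and I expect this to be the main obstacle. I would compute the commutator
\[
[\,z^2\partial_z+w^2\partial_w-mz-nw,\;\partial_z^{\ell-j}\partial_w^j\,].
\]
Using $[\partial_z^{a},z^2]=2az\partial_z^{a-1}+a(a-1)\partial_z^{a-2}$ and $[\partial_z^a,z]=a\partial_z^{a-1}$, restriction to $z=w$ splits the result into two kinds of terms. The first kind are terms proportional to $z\cdot(\text{order }\ell)$. These reproduce $(z^2\tfrac{d}{dz}-kz)\circ\operatorname{Rest}$, because on the diagonal $z^2\partial_z+w^2\partial_w$ restricts to $z^2\tfrac{d}{dz}$ and the coefficients $2(\ell-j)+2j=2\ell$ shift $m+n$ to $k$. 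The second kind are terms of order $\ell-1$ with constant coefficients. A monomial $\partial_z^{\ell-1-i}\partial_w^{i}$ collects contributions from $j=i$ and $j=i+1$, namely
\[
a_i\,(\ell-i)(\ell-i-1-m)\;+\;a_{i+1}\,(i+1)(i-n).
\]
The proof reduces to the vanishing of this expression for each $i$, i.e.\ to the two-term recurrence
\[
\frac{a_{i+1}}{a_i}=-\frac{(\ell-i)(m-\ell+i+1)}{(i+1)(n-i)}.
\]
I would check this directly against the closed form $a_j=(-1)^{\ell-j}\dfrac{(m+j-\ell)!\,(n-j)!}{j!\,(m-\ell)!\,(\ell-j)!\,(n-\ell)!}$. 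The factorial ratios give exactly the stated quotient, and the sign alternates as required. If my sign convention for $d\varpi$ is off, the recurrence changes only by an overall sign, and I would fix it against the case $\ell=1$.

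Non-vanishing is then easy. The second expression in \eqref{eqn:25050114} shows that the coefficient of $\partial_z^{\ell}$ is $(-1)^{\ell}\binom{n}{\ell}\neq0$, since $\ell\le n$. Now apply $R_{m,n}^k$ to $z^{\ell}\otimes 1$, which lies in $V_m\otimes V_n$ because $\ell\le m$. Only the $j=0$ term survives, and it gives the nonzero constant $(-1)^{\ell}\binom{n}{\ell}\,\ell!$. Hence $R_{m,n}^k\neq0$.
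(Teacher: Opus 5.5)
Your proof is correct, and it takes a genuinely different route from the paper's.

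The paper never checks equivariance directly. It proceeds in four steps:
\begin{itemize}
\item it starts from the classical Rankin--Cohen bracket $\varpi_{k_1}\otimes\varpi_{k_2}\to\varpi_{k_1+k_2+2\ell}$ for positive weights;
\item it rewrites the coefficients as Gamma quotients in $\lambda'=1-k_1$, $\lambda''=1-k_2$;
\item it uses the reflection formula to get an expression that continues to $\lambda'=m+1$, $\lambda''=n+1$, which gives a symmetry breaking operator $\varpi_{-m}\boxtimes\varpi_{-n}\to\varpi_{-k}$ on $\mathcal{O}(\mathcal{H})$;
\item it restricts this operator to the finite-dimensional subrepresentations.
\end{itemize}
That route explains where the formula comes from and fits the paper's theme of uniform dependence on infinitesimal characters. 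However, it leans on the classical covariance and on a continuation step it leaves implicit.

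Your infinitesimal computation is self-contained and elementary. Specifically:
\begin{itemize}
\item Your formulas for $d\varpi(X)$, $d\varpi(H)$, $d\varpi(Y)$ are exactly right for the convention $g^{-1}=\begin{pmatrix}a&b\\c&d\end{pmatrix}$, so the hedge about signs is unnecessary.
\item The $z$-proportional commutator terms do give $-(m+n)z+2\ell z=-kz$.
\item The closed form satisfies your two-term recurrence. The recurrence in fact determines every $a_j$ from $a_0$, so you derive the shape of the formula rather than only verify it.
\item Your value $R_{m,n}^k(z^\ell\otimes 1)=(-1)^\ell n!/(n-\ell)!$ is correct. The display \eqref{eqn:R_zl} omits the factor $\ell!$, though the paper uses the correct value later.
\end{itemize}

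One step you should make explicit. A priori, $\operatorname{Rest}_{z=w}\circ D$ maps $V_m\otimes V_n$ only into $\operatorname{Pol}_{m+n-\ell}[z]$, not into $V_k=\operatorname{Pol}_k[z]$. Your intertwining identities hold with $d\varpi_{-k}$ acting on all of $\mathbb{C}[z]$. Hence the image is a finite-dimensional subspace stable under $d\varpi_{-k}(H)$, so it is spanned by monomials. It is also stable under $d\varpi_{-k}(Y)$, which sends $z^j$ to $(j-k)z^{j+1}\neq 0$ for $j>k$. So no $z^j$ with $j>k$ can occur, and the image lies in $V_k$. In the paper's route this is the tacit fact that $V_k$ is the finite-dimensional subrepresentation of $\varpi_{-k}$.
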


The operator $R_{m,n}^k$ provides a finite-dimensional model of the Rankin--Cohen bracket.
Its construction is obtained via meromorphic continuation of the Rankin--Cohen bidifferential operator, 
which we now recall.

Let $\mathcal{H} := \{z \in \mathbb{C} : \operatorname{Im} z > 0\}$ be the upper half-plane. 
For $\lambda \in \mathbb{Z}$, define a representation $\varpi_{\lambda}$ of 
$G_{\mathbb{R}} = SL(2,\mathbb{R})$ on the space of holomorphic functions on $\mathcal{H}$,
$\mathcal{O}(\mathcal{H})$, by
\[
  (\varpi_{\lambda}(g)F)(z)
  =(cz+d)^{-\lambda}F\!\left(\frac{az+b}{cz+d}\right),
  \qquad
\text{for } g^{-1}=\begin{pmatrix} a & b \\ c & d\end{pmatrix}.
\]

The representation $\varpi_{\lambda}$ has infinitesimal character $\lambda - 1$. 
It contains a discrete series representation if $\lambda \ge 2$, and a finite-dimensional subrepresentation of dimension $1-\lambda$ if $\lambda \le 0$.

For positive integers $k_1, k_2$ and $\ell \in \mathbb{N}$, the Rankin--Cohen bracket
\[
   \mathcal{R} \colon \varpi_{k_1} \otimes \varpi_{k_2} \to \varpi_{k_1+k_2+2\ell}
\]
is given by the differential operator
\[
  \sum_{j=0}^\ell (-1)^j 
  \binom{k_1+\ell-1}{j} 
  \binom{k_2+\ell-1}{\ell-j}
  \frac{\partial^{\ell}}{\partial z^{\ell-j}\partial w^{j}},
\]
followed by the restriction $\operatorname{Rest}_{z=w}$;
 see \cite{xCo75, xRa56}.

To extend this operator beyond the discrete series range, that is, to negative parameters,
we rewrite it in terms of infinitesimal characters and apply meromorphic continuation. 
Set $\lambda' := 1 - k_1$ and $\lambda'' := 1 - k_2$. 
Then the above operator can be expressed as
\[
  \sum_{j=0}^{\ell}
   \frac{(-1)^j}{j!(\ell-j)!} 
   \frac{\Gamma(1-\lambda'+\ell)}{\Gamma(1-\lambda'+\ell-j)}
   \frac{\Gamma(1-\lambda''+\ell)}{\Gamma(1-\lambda''+j)}
  \frac{\partial^{\ell}}{\partial z^{\ell-j}\partial w^{j}}.
\]

Using the reflection formula
\[
  \Gamma(z)\Gamma(1-z)=\frac{\pi}{\sin \pi z},
\]
this expression can be rewritten as
\begin{equation}
\label{eqn:RC_fd}
   \sum_{j=0}^{\ell}
   \frac{(-1)^j}{j!(\ell-j)!} 
   \frac{\Gamma(\lambda'-\ell+j)\Gamma(\lambda''-j)}
        {\Gamma(\lambda'-\ell)\Gamma(\lambda''-\ell)}
  \frac{\partial^{\ell}}{\partial z^{\ell-j}\partial w^{j}}.  
\end{equation}
This expression is well-suited for meromorphic continuation in the parameters $\lambda', \lambda''$.

\begin{proof}[Proof of Proposition~\ref{prop:RC_fd}]
Set $m := \lambda' - 1$ and $n := \lambda'' - 1$. 
Then \eqref{eqn:RC_fd} is well-defined for $\lambda',\lambda'' \in \mathbb{Z}_{>0}$ provided that
\[
   \ell \le \min(m,n).
\]

This expression, followed by the restriction $\operatorname{Rest}_{z=w}$, 
defines a symmetry breaking operator
\[
  \varpi_{1-\lambda'} \boxtimes \varpi_{1-\lambda''} \to \varpi_{1-\lambda'''},
\]
where $\lambda''' := \lambda' + \lambda'' - 2\ell - 1$. 

Moreover, this operator coincides with $R_{m,n}^k$ defined in \eqref{eqn:25050114}. 

Restricting to finite-dimensional subrepresentations yields a symmetry breaking operator
\[
  V_m \boxtimes V_n \to V_k,
\]
with $k = m+n-2\ell$.

The restricted operator is nonzero, since
\begin{equation}
\label{eqn:R_zl}
   R_{m,n}^k(z^{\ell} \otimes 1)=    \frac{(-1)^{\ell}n!}{\ell !(n-\ell)!}
 \neq 0.
\end{equation}
Hence the symmetry breaking operator $R_{m,n}^k$ generates the one-dimensional space
$\operatorname{Hom}_G(V_m \otimes V_n, V_k)$.
\end{proof}

\medskip
\subsection{Determining the Rational Function $c_{\delta \varepsilon}(\lambda', \lambda'';\lambda''')$}
~~~\newline
In this section, we explicitly determine the rational function 
$c_{\delta \varepsilon}(\lambda', \lambda'';\lambda''')$
appearing in Theorem~\ref{thm:26031537},
thereby completing the proof of the main formula.

\begin{theorem}
\label{thm:250502}
The function $c_{\delta \varepsilon}(\lambda', \lambda'';\lambda''')$ is given by
\[
   c_{\delta \varepsilon}(\lambda', \lambda'';\lambda''')
   =
   \frac{(\delta \lambda' + \varepsilon \lambda'' + 1 + \lambda''')
         (\delta \lambda' + \varepsilon \lambda'' + 1 - \lambda''')}
        {8 \lambda' \lambda''}.  
\]
\end{theorem}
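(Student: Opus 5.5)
The plan is to take the existence and universality of $c_{\delta\varepsilon}(\lambda',\lambda'';\lambda''')$ from Theorem~\ref{thm:26031537} as given, and to compute it on a Zariski dense set of parameters coming from $SU(2)$. Take $\Pi'=V_m$, $\Pi''=V_n$, $\Pi'''=V_k$ with $k=m+n-2\ell$ and $0\le \ell\le\min(m,n)$, and take $T=R^k_{m,n}$ from Proposition~\ref{prop:RC_fd}. Then $(\lambda',\lambda'',\lambda''')=(m+1,n+1,m+n+1-2\ell)$. For each fixed $\ell$ these triples are Zariski dense in the plane $\lambda'''=\lambda'+\lambda''-1-2\ell$, and infinitely many such parallel planes form a Zariski dense subset of $\mathbb C^3$. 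The claimed formula is even in $\lambda'''$, which is consistent with $\lambda'''$ being defined only up to $W=\{\pm1\}$. So it suffices to verify
\[
8\lambda'\lambda''\,c_{\delta\varepsilon}=(\delta\lambda'+\varepsilon\lambda''+1)^2-\lambda'''^2
\]
on these triples.

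First I will make $\operatorname{pr}_{\lambda\to\lambda+\delta}$ on $\Pi\otimes\mathbb C^2$ explicit. By Lemma~\ref{lem:26031443} we have $\operatorname{pr}_{\lambda\to\lambda+\delta}=\frac{\delta}{4\lambda}\bigl((\Omega-\lambda^2-2)+2(1+\delta\lambda)\bigr)$. Combining this with Lemma~\ref{lem:26031430} gives
\begin{align*}
\operatorname{pr}_{\lambda\to\lambda+\delta}(u\otimes f_+)&=\tfrac{\delta}{2\lambda}\bigl[((1+\delta\lambda)u+Hu)\otimes f_+ + Xu\otimes f_-\bigr],\\
\operatorname{pr}_{\lambda\to\lambda+\delta}(u\otimes f_-)&=\tfrac{\delta}{2\lambda}\bigl[2Yu\otimes f_+ + ((1+\delta\lambda)u-Hu)\otimes f_-\bigr].
\end{align*}
I will apply this in each factor to $H(u,v)$, then apply $\operatorname{pr}_{F\to F'}$ via \eqref{eqn:26022608}, which keeps one half of the "$ad-bc$" coefficient. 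The result is
\[
\frac{\delta\varepsilon}{8\lambda'\lambda''}\,Q_{\delta\varepsilon}(u,v)\otimes(e_+\otimes f_--e_-\otimes f_+),
\]
where $Q_{\delta\varepsilon}(u,v)$ is an explicit element of $\Pi'\otimes\Pi''$. It is quadratic in the operators $H,X,Y$ acting separately on $u$ and on $v$, with coefficients $a=1+\delta\lambda'$ and $b=1+\varepsilon\lambda''$. The constant part should be $ab$. The purely quadratic part should assemble into a multiple of $Hu\otimes Hv+2(Xu\otimes Yv+Yu\otimes Xv)$. Applying $T$ turns this into a scalar via $\Omega_{\rm diag}-\Omega'-\Omega''$, namely $(\lambda'''^2-1)-(\lambda'^2-1)-(\lambda''^2-1)$.

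The main obstacle is the linear terms, which have the shape $\alpha\,Hu\otimes v+\beta\,u\otimes Hv$ with $\alpha\neq\beta$ in general. Invariance of $T$ only controls the combination $T(Hu\otimes v+u\otimes Hv)$, not each term separately. I will first check whether the antisymmetry of $H(u,v)$ makes the linear terms cancel or combine into the diagonal $H$. If they do not, I will use the general principle that $c_{\delta\varepsilon}$ exists and evaluate on a single good test vector.

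The fallback is to realize $V_m,V_n$ as polynomials, with $H=2z\partial_z-m$, $X=-\partial_z$ and $Y=z^2\partial_z-mz$ (signs to be fixed from $\varpi_{-m}$). Take $u=z^\ell$ and $v=1$, so that $T(u\otimes v)=R^k_{m,n}(z^\ell\otimes1)=\frac{(-1)^\ell n!}{\ell!(n-\ell)!}\neq0$ by \eqref{eqn:R_zl}. I will compute $R^k_{m,n}$ on the handful of monomial tensors $z^{\ell\pm1}\otimes z^{0,1}$ etc. that appear in $Q_{\delta\varepsilon}(z^\ell,1)$, using \eqref{eqn:25050114}. Only weight-compatible terms survive, namely those of total degree $\ell$ in a suitable sense. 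Dividing by $R^k_{m,n}(z^\ell\otimes1)$ then gives $c_{\delta\varepsilon}$. I expect the bookkeeping of these few Rankin--Cohen coefficients to be the laborious step. At the end I will factor the result as the difference of squares above and conclude by Zariski density.
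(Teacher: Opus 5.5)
Your first route (Lemma~\ref{lem:26031443} plus the diagonal Casimir) is correct and genuinely different from the paper's proof, and the obstacle you anticipate does not occur. However, the identity you set out to verify is false when $\delta\neq\varepsilon$: the printed statement has a sign misprint, so your final step has to be replaced by a corrected target.

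\textbf{The factor of 2.} The Leibniz rule gives $(\Omega-\lambda^2-2)(u\otimes f_+)=2Hu\otimes f_+ + 4Xu\otimes f_-$. Lemma~\ref{lem:26031430} as printed drops the $2$ in front of $Xu_+$. So the $f_-$-component of $\operatorname{pr}_{\lambda\to\lambda+\delta}(u\otimes f_+)$ is $\tfrac{\delta}{2\lambda}\,2Xu$, not $\tfrac{\delta}{2\lambda}Xu$. Without this factor the quadratic terms would not assemble into the Casimir.

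\textbf{Your route goes through.} Set $a=1+\delta\lambda'$ and $b=1+\varepsilon\lambda''$. Then
\[
(\mathrm{id}\otimes\operatorname{pr}_{F\to F'})\operatorname{pr}_{\delta\varepsilon}H(u,v)
=\frac{\delta\varepsilon}{8\lambda'\lambda''}\Bigl(2ab\,u\otimes v-2Hu\otimes Hv-4(Xu\otimes Yv+Yu\otimes Xv)\Bigr),
\]
because $(a+H)\otimes(b-H)+(a-H)\otimes(b+H)=2ab-2H\otimes H$. The linear terms cancel identically. Applying $T$ and using $T\circ\Omega_{\mathrm{diag}}=(\lambda'''^2-1)T$, the bracket becomes
\[
2ab+\lambda'^2+\lambda''^2-1-\lambda'''^2=(\delta\lambda'+\varepsilon\lambda''+1)^2-\lambda'''^2 .
\]

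\textbf{Comparison with the paper.} Your argument works verbatim for every real form, every $\Pi',\Pi'',\Pi'''$ satisfying the hypotheses, and every $T$. It also proves existence and universality of $c_{\delta\varepsilon}$ at the same time. So you need neither the rationality input from \cite{xk26}, nor Zariski density, nor the Rankin--Cohen fallback. The paper instead specializes to $SU(2)$ and computes the projections in the polynomial model (Lemma~\ref{lem:25050109}). It then evaluates $R^k_{m,n}$ on $H(z^\ell,1)$, writing out only the case $(+,+)$, and extends by density. That buys an explicit model check, not more generality.

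\textbf{The sign problem.} Your computation carries the prefactor $\delta\varepsilon$, and this factor is genuine. The correct result is
\[
c_{\delta\varepsilon}(\lambda',\lambda'';\lambda''')
=\frac{(\delta\lambda'+\varepsilon\lambda''+1+\lambda''')(\delta\lambda'+\varepsilon\lambda''+1-\lambda''')}{8\,(\delta\lambda')(\varepsilon\lambda'')}.
\]
Hence your announced step, verifying $8\lambda'\lambda''c_{\delta\varepsilon}=(\delta\lambda'+\varepsilon\lambda''+1)^2-\lambda'''^2$, cannot succeed for $\delta\neq\varepsilon$ by either route. Indeed, the paper's own $g_{+-}$ and $g_{-+}$ are the negatives of the printed numerators.

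Two independent checks confirm this:
\begin{itemize}
\item[(i)] Since $\sum_{\delta,\varepsilon}\operatorname{pr}_{\delta\varepsilon}=\mathrm{id}$ and $(T\otimes\operatorname{pr}_{F\to F'})H(u,v)=T(u\otimes v)$, one must have $\sum_{\delta,\varepsilon}c_{\delta\varepsilon}=1$. The corrected formula satisfies this. The printed formula sums to $(\lambda'^2+\lambda''^2+1-\lambda'''^2)/(2\lambda'\lambda'')$ instead.
\item[(ii)] Take $V_1\otimes V_1\to V_2$ with $u,v$ highest weight vectors. Then $\operatorname{pr}_{+-}H(u,v)=(u\otimes e_+)\otimes\tfrac12(v\otimes f_--Yv\otimes f_+)$. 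This gives $c_{+-}(2,2;3)=\tfrac14$, not $-\tfrac14$.
\end{itemize}

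So state and prove the corrected formula. The fence and stability results are unaffected, since they use only the zero set of $c_{\delta\varepsilon}$.
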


To this end, 
we define the functions
\begin{align*}
g_{++}(m,n;k)&:= (m+n-k+2)(m+n+k+4), \\
g_{+-}(m,n;k)&:= (n-m+k)(m-n+k+2), \\
g_{-+}(m,n;k)&:= (m-n+k)(n-m+k+2), \\
g_{--}(m,n;k)&:= (m+n-k)(m+n+k+2).
\end{align*}

\begin{proposition}
\label{prop:25050220}
Let $m, n, k \in {\mathbb{N}}$.  
Suppose that 
$
   [V_m \otimes V_n: V_k] \ne 0.  
$
Then, setting  
\begin{align}
\label{eqn:lmd_mnk}
  (\lambda', \lambda'';\lambda''')&=(m+1,n+1;k+1),
\\
\intertext{we have}
\notag
  8 \lambda' \lambda'' \, c_{\delta\varepsilon}(\lambda', \lambda'';\lambda''')
  &=g_{\delta\varepsilon}(m,n;k).  
\end{align}
\end{proposition}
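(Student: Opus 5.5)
The plan is to compute the left-hand side of \eqref{eqn:26031537} directly, using the Casimir description of the projections. Then I specialize to $(\lambda',\lambda'';\lambda''')=(m+1,n+1;k+1)$ with $T=R_{m,n}^k$ from Proposition~\ref{prop:RC_fd}, which is nonzero by \eqref{eqn:R_zl}.

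\textbf{Step 1 (projections as Casimir polynomials).} Since $V_m\otimes\mathbb{C}^2$ is completely reducible, assumption \eqref{eqn:sl2_scalar} holds. Lemma~\ref{lem:26031443} then gives, on each factor,
\[
\operatorname{pr}_{\lambda\to\lambda+\delta}=\frac{\delta}{4\lambda}\bigl((\Omega-\lambda^2-2)+2+2\delta\lambda\bigr).
\]
Hence
\[
\operatorname{pr}_{\delta\varepsilon}=\frac{\delta\varepsilon}{16\lambda'\lambda''}\bigl((\Omega'-\lambda'^2-2)+2+2\delta\lambda'\bigr)\otimes\bigl((\Omega''-\lambda''^2-2)+2+2\varepsilon\lambda''\bigr).
\]
I apply this to $H(u,v)$ from \eqref{eqn:H_uv} and expand with Lemma~\ref{lem:26031430}, using
\[
(\Omega-\lambda^2-2)(u\otimes e_+)=2Hu\otimes e_++2Xu\otimes e_-,\qquad
(\Omega-\lambda^2-2)(u\otimes e_-)=4Yu\otimes e_+-2Hu\otimes e_-.
\]
Then I apply $\operatorname{pr}_{F\to F'}$ through \eqref{eqn:26022608}, which only retains the antisymmetric coefficient $\tfrac12(ad-bc)$.

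\textbf{Step 2 (collecting terms).} The result is a combination of three kinds of terms. There is a constant multiple of $u\otimes v$. There are linear terms $Hu\otimes v$, $Xu\otimes v$, $u\otimes Hv$, and similar. There are bilinear terms $Hu\otimes Hv$, $Xu\otimes Yv$, $Yu\otimes Xv$. Applying $T$ and using that $T$ intertwines the diagonal Casimir, I get from the Leibniz rule (as in the proof of Lemma~\ref{lem:26031430})
\[
T\bigl(2(Hu\otimes Hv+2Xu\otimes Yv+2Yu\otimes Xv)\bigr)=(\lambda'''^2-\lambda'^2-\lambda''^2+1)\,T(u\otimes v).
\]
I expect the bilinear terms to appear in exactly this invariant combination, because $F'$ is the trivial summand of $\mathbb{C}^2\otimes\mathbb{C}^2$. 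This is the source of the $\lambda'''$-dependence.

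\textbf{Step 3 (linear terms and the main obstacle).} The delicate point is the linear terms. After projecting to $F'$, I expect them to assemble into the diagonal action $T((Z\otimes1+1\otimes Z)w)=Z\,T(w)$ of some $Z\in\mathfrak{g}_{\mathbb C}$. The coefficient of such a term would then have to vanish by antisymmetry of $F'$, since $\operatorname{pr}_{F\to F'}$ is $\mathfrak{g}$-invariant. I will check this sign bookkeeping carefully. If it does not close up, the fallback is to make the computation concrete: evaluate both sides on the test vector $z^\ell\otimes1$. There $R_{m,n}^k$ is known by \eqref{eqn:R_zl}, and the projections onto $V_{m\pm1}\otimes V_{n\pm1}$ can be written explicitly via the polynomial models $\operatorname{Pol}_{m\pm1}[z]$. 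In that approach, the composite $V_{m}\otimes V_n\to V_{m+\delta}\otimes V_{n+\varepsilon}\to V_k$ is a scalar multiple of $R_{m+\delta,n+\varepsilon}^k$ by multiplicity one, and the scalar is read off from leading coefficients.

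\textbf{Step 4 (identification).} Once the scalar is obtained, I factor the resulting quadratic in $\lambda'''$ as $(\delta\lambda'+\varepsilon\lambda''+1)^2-\lambda'''^2$. I then substitute $\lambda'=m+1$, $\lambda''=n+1$, $\lambda'''=k+1$ and match with $g_{\delta\varepsilon}(m,n;k)$. For example, $(++)$ gives $(m+n+k+4)(m+n-k+2)$, and $(+-)$ gives $(m-n+1+k+1)(m-n+1-k-1)$. I will check sign conventions against the listed $g_{\delta\varepsilon}$ here.
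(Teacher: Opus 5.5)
Your main route is sound and, once executed correctly, proves the statement. It is also genuinely different from the paper's. Three points in the execution need fixing.

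(i) Your formula $(\Omega-\lambda^2-2)(u\otimes e_+)=2Hu\otimes e_++2Xu\otimes e_-$ reproduces a misprint in the display of Lemma~\ref{lem:26031430}. The Leibniz identity $(\Omega-\lambda^2-2)(u\otimes f)=2Hu\otimes Hf+4(Xu\otimes Yf+Yu\otimes Xf)$ gives $2Hu\otimes e_++4Xu\otimes e_-$. With the coefficient $2$, the bilinear part comes out as $-4(Hu\otimes Hv+Xu\otimes Yv+Yu\otimes Xv)$. That is not the Casimir cross term, so Step~2 fails. With $4$, it is exactly $-4(Hu\otimes Hv+2Xu\otimes Yv+2Yu\otimes Xv)$, as you predicted.

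(ii) Step~3 is not an obstacle. Put $A':=\Omega'-\lambda'^2-2$. The $F'$-component of $(A'\otimes 1)H(u,v)$ is $Hu\otimes v-Hu\otimes v=0$, because the $X$- and $Y$-terms land in $e_\pm\otimes f_\pm$, which $\operatorname{pr}_{F\to F'}$ kills. The same holds for $1\otimes A''$. Conceptually, set $s=e_+\otimes f_--e_-\otimes f_+$; then $Z\mapsto\operatorname{pr}_{F\to F'}((Z\otimes 1)s)$ is a $\mathfrak{g}_{\mathbb C}$-invariant functional on $\mathfrak{g}_{\mathbb C}$, hence zero. So the linear terms vanish before $T$ is applied, and no fallback is needed.

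(iii) Keep the prefactor $\delta\varepsilon$ from Step~1. The constant term contributes $(2+2\delta\lambda')(2+2\varepsilon\lambda'')$, and the bilinear term contributes $-2(\lambda'''^2-\lambda'^2-\lambda''^2+1)$. Hence the scalar is
\[
\frac{\delta\varepsilon}{8\lambda'\lambda''}\Bigl((\delta\lambda'+\varepsilon\lambda''+1)^2-\lambda'''^2\Bigr).
\]
The factor $\delta\varepsilon$ is exactly what settles your sign worry in Step~4. For $(+,-)$ you get $-(m-n+k+2)(m-n-k)=g_{+-}(m,n;k)$. The case $(-,+)$ is similar, and $(\pm,\pm)$ match directly. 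Incidentally, this shows that \eqref{eqn:c_formula} as printed lacks the factor $\delta\varepsilon$ in the mixed cases. The Proposition, phrased via $g_{\delta\varepsilon}$, is the correct version, so do not try to force agreement with \eqref{eqn:c_formula}.

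Comparison with the paper: the paper proves the Proposition only inside finite-dimensional models, and your Step~3 fallback is essentially this route. It uses explicit projections on $\operatorname{Pol}_m[z]$ (Lemma~\ref{lem:25050109}) and the four resulting polynomial identities. It then evaluates the Rankin--Cohen-type operator $R^k_{m,n}$ on the test vector $z^\ell\otimes 1$ (Proposition~\ref{prop:R_c}). The general identity then rests on the a priori existence of a rational $c_{\delta\varepsilon}$, deferred to \cite{xk26}, together with Zariski density.

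Your computation needs only two inputs: the Casimir acts by scalars on $\Pi',\Pi'',\Pi'''$, and semisimply on $\Pi'\otimes\mathbb{C}^2$ and $\Pi''\otimes\mathbb{C}^2$. It therefore proves \eqref{eqn:26031537}, with the explicit scalar, for arbitrary such representations in one stroke: existence, universality, and formula. This is in the same spirit as the proofs of Theorems~\ref{thm:260314_mahoroba} and \ref{thm:26030325}. The Proposition is then a specialization: complete reducibility gives \eqref{eqn:sl2_scalar}, and the hypothesis $[V_m\otimes V_n:V_k]\ne 0$ is needed only to supply a nonzero $T$.

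The paper's route is less self-contained for $\mathfrak{sl}_2$. Its payoff is that its strategy (existence from general theory, values read off in compact models) is designed to extend to higher-rank pairs, and it produces explicit symmetry breaking operators along the way.
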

Since both sides are rational functions in 
$(\lambda',\lambda'';\lambda''')$,
and since the set of triples of infinitesimal characters
\begin{multline*}
  \{(m+1,n+1;k+1): [V_m \otimes V_n:V_k] \ne 0\}
\\
  = \{(m+1,n+1;k+1): m,n \in \mathbb{N},\ |m-n| \le k \le m+n,\ k\equiv m+n \!\!\!\pmod{2}\}
\end{multline*}
is Zariski dense in the $(\lambda',\lambda'';\lambda''')$-space, 
Theorem~\ref{thm:250502} follows immediately from Proposition~\ref{prop:25050220}.  
Consequently, 
the identity \eqref{eqn:26031537} in Theorem~\ref{thm:26031537} 
holds for arbitrary real forms $(G,G')$, 
with the rational function 
$c_{\delta \varepsilon}(\lambda', \lambda''; \lambda''')$ 
given by \eqref{eqn:c_formula}.

Retaining the notation in the previous subsection, 
suppose that $[V_m \otimes V_n:V_k] \ne 0$.  
Then $R_{m,n}^k \colon V_m \boxtimes V_n \to V_k$ defines a nonzero 
symmetry breaking operator.  

We now turn to the determination of the rational function 
$c_{\delta \varepsilon}(\lambda', \lambda'';\lambda''')$ 
using the finite-dimensional setting.

\begin{proposition} 
\label{prop:R_c} 
Let $\delta, \varepsilon \in \{+, -\}$, and define $H(u,v)$ as in \eqref{eqn:Hzw} for $u=z^{\ell}$ and $v=1$. Then
\[
  (R_{m,n}^k \otimes \operatorname{pr}_{F \to F'}) \circ \operatorname{pr}_{\delta \varepsilon}(H(u,v))
  = \frac{1}{8} \frac{g_{\delta \varepsilon}(m,n;k)}{(m+1)(n+1)} R_{m,n}^k \bigl(z^{\ell} \otimes 1\bigr).
\]
\end{proposition}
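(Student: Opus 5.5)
The plan is a direct computation in the polynomial model $V_m=\operatorname{Pol}_m[z]$, $V_n=\operatorname{Pol}_n[w]$. Both primary components of $V_m\otimes\mathbb C^2$ are genuine eigenspaces, by complete reducibility. So Lemma~\ref{lem:26031443} expresses the projections as polynomials in the Casimir operator:
\[
\operatorname{pr}_{\lambda\to\lambda+\delta}=\frac{\delta}{4\lambda}\,\bigl(\Omega-\lambda^2+\delta\cdot 2\lambda\bigr)
=\frac{\delta}{4\lambda}\,\bigl((\Omega-\lambda^2-2)+2(1+\delta\lambda)\bigr).
\]
Applying this with $\lambda=\lambda'=m+1$ on the first factor and $\lambda=\lambda''=n+1$ on the second factor gives
\[
\operatorname{pr}_{\delta\varepsilon}=\frac{\delta\varepsilon}{16\lambda'\lambda''}\bigl(A+2(1+\delta\lambda')\bigr)\boxtimes\bigl(B+2(1+\varepsilon\lambda'')\bigr),
\]
where $A$ and $B$ denote $\Omega-\lambda^2-2$ acting on the respective factors. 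Lemma~\ref{lem:26031430} gives $A$ and $B$ explicitly on vectors $u_+\otimes e_++u_-\otimes e_-$ in terms of $H,X,Y$. Expanding the product produces four terms: $1$, $A$, $B$, and $AB$.

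First I will apply these four terms to $H(z^\ell,1)=(z^\ell\otimes e_+)\otimes(1\otimes f_-)-(z^\ell\otimes e_-)\otimes(1\otimes f_+)$. Lemma~\ref{lem:26031430} turns each term into a combination of vectors $(p\otimes e_\pm)\otimes(q\otimes f_\pm)$. Here $p\in\{z^\ell,Hz^\ell,Xz^\ell,Yz^\ell,\dots\}$, and in the $AB$ term also second-order expressions such as $HXz^\ell$ or $YHz^\ell$; similarly $q$ is built from $1,H1,X1,Y1$.

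Next I will apply $\operatorname{pr}_{F\to F'}$ via \eqref{eqn:26022608}, which keeps only the coefficient $\tfrac12(ad-bc)$. This leaves an element of $V_m\otimes V_n$ tensored with the fixed vector $e_+\otimes f_--e_-\otimes f_+$. Only components of the form $e_+\otimes f_-$ and $e_-\otimes f_+$ survive, so just a few terms remain.

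In the model $\varpi_{-m}$, the operators $H,X,Y$ act by first-order differential operators. Up to sign conventions these are $H=\pm(2z\partial_z-m)$, with one of $X,Y$ equal to $\pm\partial_z$ and the other $\pm(z^2\partial_z-mz)$. I will fix these signs once from the formula for $\varpi_{-m}$. Then $Hz^\ell$ is a multiple of $z^\ell$, while $X$ and $Y$ shift the degree by $\pm1$. Consequently every surviving term is a multiple of $z^\ell\otimes1$, of $z^{\ell-1}\otimes w$, or (from $AB$) of $z^{\ell+1}\otimes w^{-1}$. The last monomial does not occur, because $\partial_w1=0$. By weight considerations all surviving terms have the same $\mathfrak h$-weight as $z^\ell\otimes1$.

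The final step is to evaluate $R_{m,n}^k$ on these monomials using \eqref{eqn:25050114}. In the sum, only $j=0$ contributes to $R(z^\ell\otimes1)$, giving \eqref{eqn:R_zl}. Only $j=1$ contributes to $R(z^{\ell-1}\otimes w)$, giving an explicit ratio to $R(z^\ell\otimes1)$, namely $-\ell(m+1-\ell)/n\cdot\ell^{-1}$ up to the factorial normalisation. This ratio can be read off from the second line of \eqref{eqn:25050114}. Substituting $\ell=(m+n-k)/2$ then expresses the total coefficient as a rational function of $m$, $n$, $k$.

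I expect the main obstacle to be the bookkeeping: the signs of $X$, $Y$, $H$ in $\varpi_{-m}$, the orientation of $F'$, and the expansion of the $AB$ term. I will check the result by verifying that the factor multiplying $R(z^\ell\otimes1)$ factorises as $g_{\delta\varepsilon}(m,n;k)/(8(m+1)(n+1))$. The four cases $(\delta,\varepsilon)$ should differ only through the substitutions $\lambda'\mapsto\delta\lambda'$ and $\lambda''\mapsto\varepsilon\lambda''$ in the constants $2(1+\delta\lambda')$ and $2(1+\varepsilon\lambda'')$. So I will carry out the computation once with symbolic $\delta$ and $\varepsilon$, and match the result against the four $g_{\delta\varepsilon}$. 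As a sanity check, the sum over $\delta,\varepsilon$ of $\operatorname{pr}_{\delta\varepsilon}$ is the identity. This forces $\sum_{\delta\varepsilon}g_{\delta\varepsilon}(m,n;k)/(8(m+1)(n+1))$ to equal the coefficient obtained from $\operatorname{pr}_{F\to F'}H(z^\ell,1)$ with no projection, which is $1$ by \eqref{eqn:26022608}. This identity catches most sign errors.
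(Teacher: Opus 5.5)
Your argument is correct, but it computes the projections differently from the paper.

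\textbf{How the two routes differ.} The paper stays in the polynomial model throughout. Lemma~\ref{lem:25050109} writes the splitting $V_m\otimes\mathbb{C}^2=V_{m+1}\oplus V_{m-1}$ explicitly. A determinant computation then gives the four bidifferential expressions for $2(m+1)(n+1)(\operatorname{id}\otimes\operatorname{pr}_{F\to F'})\circ\operatorname{pr}_{\delta\varepsilon}(H(F,G))$ for arbitrary $F,G$. Only at the end does it substitute $F=z^\ell$, $G=1$ and evaluate $R_{m,n}^k$ on $z^\ell\otimes1$ and $z^{\ell-1}\otimes w$; that last step coincides with yours.

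You instead write $\operatorname{pr}_{\delta\varepsilon}$ as a product of Casimir polynomials via Lemma~\ref{lem:26031443}. This is more conceptual, and it actually makes the coordinates unnecessary. Carrying out your expansion, the terms linear in $A$ and in $B$ cancel after $\operatorname{pr}_{F\to F'}$. With $a=2(1+\delta\lambda')$ and $b=2(1+\varepsilon\lambda'')$ one gets
\[
(\operatorname{id}\otimes\operatorname{pr}_{F\to F'})\bigl((A+a)\boxtimes(B+b)\bigr)H(u,v)=ab\,u\otimes v-2\bigl(\Omega_{\operatorname{diag}}-\Omega\otimes1-1\otimes\Omega\bigr)(u\otimes v).
\]
Consider composing with $R_{m,n}^k$, or with any symmetry breaking operator into a module of infinitesimal character $\lambda'''$. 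The result is the scalar $\delta\varepsilon\bigl[(\delta\lambda'+\varepsilon\lambda''+1)^2-\lambda'''^2\bigr]/(8\lambda'\lambda'')$ times the image of $u\otimes v$. At $(m+1,n+1;k+1)$ this equals $g_{\delta\varepsilon}(m,n;k)/(8(m+1)(n+1))$ in all four cases.

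\textbf{What each route buys.} Stopped at this point, your method proves Theorem~\ref{thm:26031537} for every real form at once. It needs no Rankin--Cohen operator and no Zariski-density step. It also shows that the prefactor $\delta\varepsilon$ is essential: it supplies the signs of $g_{+-}$ and $g_{-+}$, and it is missing from \eqref{eqn:c_formula} as printed. The paper's route, in exchange, gives explicit bidifferential formulas in the model for arbitrary $F,G$.

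\textbf{Two cautions on the bookkeeping.} First, the display in Lemma~\ref{lem:26031430} has dropped a factor of $2$. Its own proof gives $2Hu\otimes Hf+4(Xu\otimes Yf+Yu\otimes Xf)$, so the $f_-$-component should read $2(2Xu_+-Hu_-)\otimes f_-$. Used verbatim, the printed display gives a wrong $A\boxtimes B$ contribution.

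Second, your sum-over-$(\delta,\varepsilon)$ check is much weaker than you suggest. Since $\sum_{\delta}\frac{\delta}{4\lambda}\bigl(A+2(1+\delta\lambda)\bigr)=1$ for every operator $A$, it only tests the constants $2(1+\delta\lambda')$, $2(1+\varepsilon\lambda'')$ and the normalisation $\operatorname{pr}_{F\to F'}H(u,v)=u\otimes v$. It is blind to errors in $A$, $B$, in the differential operators, and in $R_{m,n}^k(z^{\ell-1}\otimes w)$, so it would not catch that typo. The real test is the match with $g_{\delta\varepsilon}$.

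A minor point: $A$ and $B$ act on different tensor factors, so compositions such as $HXz^\ell$ never occur. The $A\boxtimes B$ term involves only $Hz^\ell\otimes H1$, $Xz^\ell\otimes Y1$ and $Yz^\ell\otimes X1$.
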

This proposition provides the explicit evaluation needed to identify 
the scalar $c_{\delta\varepsilon}(\lambda', \lambda''; \lambda''')$ in Theorem~\ref{thm:26031537}, showing Proposition~\ref{prop:25050220}. To prove Proposition~\ref{prop:R_c}, we analyze the terms appearing there (or in Theorem~\ref{thm:26031537}) one by one. We begin with the projections onto the primary components, as defined in \eqref{eqn:pr_pm_Pi}.

For the finite-dimensional representations $V_m$ with infinitesimal character $\lambda = m+1$, the irreducible decomposition (Pieri rule) is
\begin{equation}
\label{eqn:Vm2}
  V_m \otimes \mathbb{C}^2 \simeq V_{m+1} \oplus V_{m-1},
\end{equation}
as $\mathfrak{sl}_2$-modules.

The projections onto the irreducible summands
\[
  \operatorname{pr}_{\pm} \colon V_m \otimes \mathbb{C}^2 \to V_{m\pm 1} \subset V_m \otimes \mathbb{C}^2
\]
correspond to the projections onto the primary components.

The following lemma gives explicit formulas for these projections.

\begin{lemma}
\label{lem:25050109}
The decomposition \eqref{eqn:Vm2} can be realized explicitly as follows. For $F \in V_m=\operatorname{Pol}_m[z]$,
\begin{align}
\label{eqn:25050109}
  (m+1)\begin{pmatrix} F \\ 0 \end{pmatrix}
  =& \begin{pmatrix} (m+1 - z \frac{d}{d z})F \\[1mm] \frac{d}{d z} F \end{pmatrix} 
    + \begin{pmatrix} z\frac{d}{d z} F \\[1mm] -\frac{d}{d z} F \end{pmatrix}, \\
\notag
  (m+1)\begin{pmatrix} 0 \\ F \end{pmatrix}
  =& \begin{pmatrix} (m+1 - z \frac{d}{d z})(z F) \\[1mm] \frac{d}{d z} (z F) \end{pmatrix} 
    + \begin{pmatrix} z(z \frac{d}{d z}-m)F \\[1mm] -(z \frac{d}{d z}-m) F \end{pmatrix}.
\end{align}
\end{lemma}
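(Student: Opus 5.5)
The plan is to read the column vectors in \eqref{eqn:25050109} as elements $F_+\otimes f_+ + F_-\otimes f_-$ of $V_m\otimes\mathbb{C}^2$, and then check two things. First, the two displayed identities hold as identities of vectors. Second, the first summand on the right of each line lies in the component $V_{m+1}$ and the second summand lies in $V_{m-1}$. Because \eqref{eqn:Vm2} is multiplicity-free, these two facts identify the summands with $\operatorname{pr}_{+}$ and $\operatorname{pr}_{-}$ of the left-hand side.

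The first check is routine. In each component the two terms simply add up to $(m+1)F$ and $0$ in the first line, and to $0$ and $(m+1)F$ in the second; for the second line one uses $\frac{d}{dz}(zF)=F+zF'$ to see the cancellation. I will also record the degree bookkeeping, since it defines the embeddings. For $G\in \operatorname{Pol}_{m+1}[z]$, the operator $(m+1-z\frac{d}{dz})$ kills the top coefficient of $G$, so $(m+1-z\frac{d}{dz})G$ has degree at most $m$, as does $G'$. For $F\in\operatorname{Pol}_m[z]$, the polynomial $(z\frac{d}{dz}-m)F$ has degree at most $m-1$. Accordingly I define
\[
\iota_+(G):=\bigl((m+1-z\tfrac{d}{dz})G,\ G'\bigr),\qquad \iota_-(K):=\bigl(zK,\ -K\bigr),
\]
for $G\in V_{m+1}$ and $K\in V_{m-1}$. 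The first line of \eqref{eqn:25050109} is then $\iota_+(F)+\iota_-(-F')$. The second line is $\iota_+(zF)+\iota_-\bigl((z\frac{d}{dz}-m)F\bigr)$.

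For the second check I would not verify equivariance of $\iota_\pm$ directly. Instead I use the Casimir criterion of Lemma~\ref{lem:26031430} with $\lambda=m+1$. By Lemma~\ref{lem:26031443}, a vector of $V_m\otimes\mathbb{C}^2$ lies in $P_{\lambda+1}=V_{m+1}$ exactly when $\Omega-\lambda^2-2$ acts on it by $2\lambda-2=2m$. It lies in $P_{\lambda-1}=V_{m-1}$ exactly when this operator acts by $-2\lambda-2=-2m-4$. To apply this, I differentiate $\varpi_{-m}$ to obtain first-order operators on $\operatorname{Pol}_m[z]$ representing $H$, $X$, $Y$. Up to the sign conventions coming from $g^{-1}$, these are $H=\pm(2z\frac{d}{dz}-m)$, one of $X,Y$ equal to $\pm\frac{d}{dz}$, and the other equal to $\pm(z^2\frac{d}{dz}-mz)$. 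The formula
\[
(\Omega-\lambda^2-2)(u_+\otimes f_++u_-\otimes f_-)=2(Hu_++2Yu_-)\otimes f_++2(Xu_+-Hu_-)\otimes f_-
\]
then reduces both eigenvalue claims to short polynomial identities in $G$ and $K$.

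The main obstacle is fixing the signs and normalizations of $H,X,Y$ on $\operatorname{Pol}_m[z]$ so that they are consistent with $Hf_\pm=\pm f_\pm$ and with the triple \eqref{eqn:sl2_tri}. A wrong sign would interchange the roles of $V_{m+1}$ and $V_{m-1}$. I would pin the conventions down by two sanity checks. The constant $1\in V_m$ should be a lowest or highest weight vector of weight $\mp m$. The vector $\iota_+(z^{m+1})=(0,(m+1)z^m)$ should be an extremal weight vector of weight $\pm(m+1)$, as a highest or lowest weight vector of $V_{m+1}$ must be. Once these conventions are fixed, the two eigenvalue computations are mechanical.
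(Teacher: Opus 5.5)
Your outline is reasonable, and a direct check is what is intended, since the paper states this lemma without proof. The componentwise identities and the degree bookkeeping are correct. One slip: the second summand of the first line is $\iota_-(F')$, not $\iota_-(-F')$. However, the step you call mechanical would not go through as you have set it up.

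The first reason is that the formula you quote from Lemma~\ref{lem:26031430} is misprinted. The cross term $4(Xu\otimes Yf+Yu\otimes Xf)$ applied to $u_+\otimes f_+$ produces $4\,Xu_+\otimes f_-$, so the $f_-$-coefficient is $2(2Xu_+-Hu_-)$. With the printed coefficient $2(Xu_+-Hu_-)$, the eigenvector test fails under every sign convention. For $\iota_+(G)$, the $f_+$-component forces $X=\frac{d}{dz}$, and the $f_-$-component then comes out as $2zG''$ rather than $2mG'$.

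The second reason is that your sanity checks only fix $H=m-2z\frac{d}{dz}$. They do not fix the sign of $X,Y$ on $V_m$ relative to $\mathbb{C}^2$, and that sign decides whether the lemma is true. Differentiating $\varpi_{-m}$ as defined gives $X=-\frac{d}{dz}$ and $Y=z^2\frac{d}{dz}-mz$. With your reading $F_+\otimes f_++F_-\otimes f_-$ (and $Xf_-=f_+$, $Yf_+=f_-$), the statement fails. For $m=1$, the vector $(z,-1)$ should span $V_0$, but $X(z\otimes f_+-1\otimes f_-)=-2\,(1\otimes f_+)\neq 0$; in fact it lies in $V_2$. In general (e.g.\ $m=2$, $F=z$) the summands lie in neither component, so the problem is not a mere swap of $V_{m\pm1}$.

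The reading that makes the lemma true is
$(F_1,F_2)\leftrightarrow F_1(z)+wF_2(z)\in\operatorname{Pol}_m[z]\otimes\operatorname{Pol}_1[w]$ with $\varpi_{-m}\otimes\varpi_{-1}$, i.e.\ $F_1\otimes f_+-F_2\otimes f_-$. Equivalently, keep $F_1\otimes f_++F_2\otimes f_-$ but use $X=\frac{d}{dz}$ and $Y=mz-z^2\frac{d}{dz}$ on $V_m$. In the latter form, with the corrected formula:
\begin{itemize}
\item $u_+=(m+1)G-zG'$, $u_-=G'$ give $Hu_++2Yu_-=mu_+$ and $2Xu_+-Hu_-=mu_-$;
\item $u_+=zK$, $u_-=-K$ give $Hu_++2Yu_-=-(m+2)u_+$ and $2Xu_+-Hu_-=-(m+2)u_-$.
\end{itemize}
These are exactly the eigenvalues $2m$ and $-2m-4$ you need.

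In the $\operatorname{Pol}_1[w]$ picture you can skip the Casimir entirely. The second summands are $(z-w)K(z)$, which is equivariant because $\frac{az+b}{cz+d}-\frac{aw+b}{cw+d}=\frac{z-w}{(cz+d)(cw+d)}$. The first summands are $(m+1)G(z)+(w-z)G'(z)$, which is $(m+1)$ times the polarization of $G\in V_{m+1}$.
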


The operator 
\[
  \operatorname{pr}_{\delta \varepsilon} \in \operatorname{End}((V_m \otimes \mathbb{C}^2) \otimes (V_n \otimes \mathbb{C}^2))
\]
is obtained by combining the projections
\[
  \operatorname{pr}_{\pm} \colon V_m \otimes \mathbb{C}^2 \to V_{m\pm 1}, 
  \qquad
  \operatorname{pr}_{\pm} \colon V_n \otimes \mathbb{C}^2 \to V_{n\pm 1}.
\]
Accordingly, we obtain four projections:
\begin{align*}
   \operatorname{pr}_{++} &: (V_m \otimes \mathbb{C}^2) \boxtimes (V_n \otimes \mathbb{C}^2) \to V_{m+1} \boxtimes V_{n+1}, \\
   \operatorname{pr}_{+-} &: (V_m \otimes \mathbb{C}^2) \boxtimes (V_n \otimes \mathbb{C}^2) \to V_{m+1} \boxtimes V_{n-1}, \\
   \operatorname{pr}_{-+} &: (V_m \otimes \mathbb{C}^2) \boxtimes (V_n \otimes \mathbb{C}^2) \to V_{m-1} \boxtimes V_{n+1}, \\
   \operatorname{pr}_{--} &: (V_m \otimes \mathbb{C}^2) \boxtimes (V_n \otimes \mathbb{C}^2) \to V_{m-1} \boxtimes V_{n-1}.
\end{align*}

Next, we compute the expression obtained by applying 
\[
  (\operatorname{id} \otimes \operatorname{pr}_{F \to F'}) \circ \operatorname{pr}_{\delta \varepsilon}
\]
to the element $H(u,v)$ defined in \eqref{eqn:H_uv}.

In the finite-dimensional setting, we take
\[
  u = F(z) \in V_m, 
  \qquad 
  v = G(w) \in V_n.
\]
Then $H(u,v)$ is a polynomial in $(z,w)$ with values in $\mathbb{C}^2 \otimes \mathbb{C}^2$, explicitly given by
\begin{equation}
\label{eqn:Hzw}
  H(u,v) := F(z)\, e_+ \otimes G(w)\, f_- - F(z)\, e_- \otimes G(w)\, f_+.
\end{equation}

On the other hand, the projection onto the trivial $G$-module,
\[
  \operatorname{pr}_{F \to F'} \colon \mathbb{C}^2 \otimes \mathbb{C}^2 \to \mathbb{C},
\]
is given explicitly in \eqref{eqn:F_ef}.

Thus, the computation reduces to evaluating each of the four cases corresponding to 
$(\delta,\varepsilon)\in\{+,-\}^2$:
\[
  (\operatorname{id} \otimes \operatorname{pr}_{F \to F'}) \circ \operatorname{pr}_{\delta \varepsilon}(H(z,w)),
\]
which we now carry out explicitly.
\begin{lemma}
Let $F \in V_m$ and $G \in V_n$. Then 
\[
  2(m+1)(n+1) (\operatorname{id} \otimes \operatorname{pr}_{F \to F'}) \circ \operatorname{pr}_{\delta \varepsilon}(H(z,w))
\]
is given by:
\begin{itemize}
\item For $(\delta, \varepsilon) = (+,+)$:
\begin{equation}
\label{eqn:25050409}
  (z-w)^2 F' G' + (z-w)(-m F G' + n F' G) + (m+n+2) F G,
\end{equation}
\item For $(\delta, \varepsilon) = (+,-)$:
\begin{equation}
\label{eqn:26030121}
  -(z-w)^2 F' G' + (z-w)(m F G' - n F' G) + (m+1)n F G,
\end{equation}
\item For $(\delta, \varepsilon) = (-,+)$:
\begin{equation}
\label{eqn:26030140}
  -(z-w)^2 F' G' + (z-w)(m F G' - n F' G) + m(n+1) F G,
\end{equation}
\item For $(\delta, \varepsilon) = (-,-)$:
\begin{equation}
\label{eqn:25050209}
  (w-z)^2 F' G' + (w-z)(-n F' G + m F G').
\end{equation}
\end{itemize}
\end{lemma}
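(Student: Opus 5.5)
We will compute the four expressions directly from the explicit projections of Lemma~\ref{lem:25050109}, using bilinearity. Write each vector of $V_m\otimes\mathbb C^2$ as a column $\begin{pmatrix} a(z)\\ b(z)\end{pmatrix}=a\,e_++b\,e_-$, and similarly $\begin{pmatrix} c(w)\\ d(w)\end{pmatrix}=c\,f_++d\,f_-$ in $V_n\otimes\mathbb C^2$.

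By \eqref{eqn:26022608}, the map $\operatorname{id}\otimes\operatorname{pr}_{F\to F'}$ sends such a pair to $\tfrac12\bigl(a(z)d(w)-b(z)c(w)\bigr)$, identified with a multiple of the generator of $F'$. Multiplying through by $2(m+1)(n+1)$ clears both the factor $\tfrac12$ and the denominators $m+1$, $n+1$ that appear in Lemma~\ref{lem:25050109}.

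First, Lemma~\ref{lem:25050109} gives the four vectors
\[
(m+1)\operatorname{pr}_\pm(F e_+),\qquad (m+1)\operatorname{pr}_\pm(F e_-),
\]
and the analogous four vectors $(n+1)\operatorname{pr}_\pm(G f_\pm)$ in the variable $w$ with $m$ replaced by $n$. For instance,
\begin{align*}
(m+1)\operatorname{pr}_-(Fe_+)&=\begin{pmatrix} zF'\\ -F'\end{pmatrix},\\
(m+1)\operatorname{pr}_-(Fe_-)&=\begin{pmatrix} z(zF'-mF)\\ -(zF'-mF)\end{pmatrix}.
\end{align*}

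Second, by \eqref{eqn:Hzw},
\[
\operatorname{pr}_{\delta\varepsilon}H(u,v)=\operatorname{pr}_\delta(Fe_+)\otimes\operatorname{pr}_\varepsilon(Gf_-)-\operatorname{pr}_\delta(Fe_-)\otimes\operatorname{pr}_\varepsilon(Gf_+).
\]
For each $(\delta,\varepsilon)$ we insert the two determinant-type expressions $ad-bc$ and subtract them. We then expand everything in monomials $F,F',G,G'$ with polynomial coefficients in $z,w$, and regroup these coefficients as polynomials in $z-w$.

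We do the $(-,-)$ case first, since it is the shortest. There every entry is a multiple of $F'$, $zF'-mF$, $G'$, or $wG'-nG$, and one checks directly that the result is $(w-z)^2F'G'+(w-z)(mFG'-nF'G)$. The mixed cases $(+,-)$ and $(-,+)$ should be exchanged by the symmetry $(m,z,F)\leftrightarrow(n,w,G)$. This symmetry also swaps $e_\pm\leftrightarrow f_\pm$ and so changes the sign of $H$; we will track this sign carefully, so that only one of the two mixed cases needs to be computed in full.

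For the last case we do not compute $(+,+)$ directly. Instead we use that $\sum_{\delta,\varepsilon}\operatorname{pr}_{\delta\varepsilon}=\operatorname{id}$. Applying $\operatorname{id}\otimes\operatorname{pr}_{F\to F'}$ to $H(u,v)$ itself gives $\tfrac12(1+1)FG=FG$, so the four expressions must sum to $2(m+1)(n+1)FG$. The $(+,+)$ expression is then obtained by subtraction. This also provides a consistency check: the constant terms satisfy $(m+n+2)+(m+1)n+m(n+1)=2(m+1)(n+1)$, and all the $(z-w)$-terms cancel in the sum.

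The main obstacle is purely bookkeeping. In the $(+,\cdot)$ projections the entries involve $(m+1-z\tfrac{d}{dz})(zF)$ and $\tfrac{d}{dz}(zF)=F+zF'$. Here the raw coefficients are polynomials in $z$ and $w$ separately, and the cancellation of every $z^2$, $w^2$ and $zw$ term into the form $(z-w)^2$, $(z-w)$ is not visible until the very end. To keep this manageable, we first expand only in the basis $\{FG,\,F'G,\,FG',\,F'G'\}$ and read off each coefficient. Each coefficient is then a polynomial of degree at most $2$ in $z,w$, and we verify that it equals the stated multiple of $(z-w)^j$ by evaluating at $z=w$ and comparing leading terms.
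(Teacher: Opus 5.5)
Your proposal is correct. Its core is the paper's own argument: expand $\operatorname{pr}_{\delta\varepsilon}H(u,v)$ using the explicit projections of Lemma~\ref{lem:25050109}, and evaluate $\operatorname{pr}_{F\to F'}$ as the pairing $ad-bc$. The paper writes this as a difference of two $2\times 2$ determinants, carries it out only for $(+,+)$, and calls the other cases analogous.

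You organize the computation differently, and both of your auxiliary facts are right:
\begin{itemize}
\item You compute $(-,-)$ and one mixed case by hand, and obtain the other mixed case by the swap symmetry.
\item You get $(+,+)$ from the completeness relation $\sum_{\delta,\varepsilon}\operatorname{pr}_{\delta\varepsilon}=\operatorname{id}$ together with $(\operatorname{id}\otimes\operatorname{pr}_{F\to F'})H(u,v)=FG$. Subtracting the other three formulas does reproduce the stated $(+,+)$ expression.
\end{itemize}
This halves the labor and explains why the constants add up to $2(m+1)(n+1)$ and the $(z-w)$-terms cancel. The price is that $(+,+)$ is then only as reliable as the other three cases. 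Your ``consistency check'' is therefore not independent; it is exactly the verification itself.

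One point in the symmetry step needs care. Write $\Phi^{(m,n)}_{\delta\varepsilon}(F,G)(z,w)$ for $2(m+1)(n+1)(\operatorname{id}\otimes\operatorname{pr}_{F\to F'})\circ\operatorname{pr}_{\delta\varepsilon}(H(u,v))$. The swap $(m,z,F,e_\pm)\leftrightarrow(n,w,G,f_\pm)$ changes the sign of $H$, as you note. It also changes the sign of $\operatorname{pr}_{F\to F'}$, because $ad-bc$ is antisymmetric under exchanging the two tensor factors. The two signs cancel, so the correct relation is
\[
\Phi^{(m,n)}_{\delta\varepsilon}(F,G)(z,w)=\Phi^{(n,m)}_{\varepsilon\delta}(G,F)(w,z),
\]
which indeed carries the $(+,-)$ formula to the $(-,+)$ formula. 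If you tracked only the sign of $H$, you would get $(-,+)$ with the wrong overall sign, and the subtraction step would carry that error into $(+,+)$.
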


\begin{proof}
We illustrate the case $(\delta, \varepsilon) = (+,+)$. By \eqref{eqn:26022608} and Lemma~\ref{lem:25050109}, the expression can be written as
\[
  \det \begin{pmatrix} (m+1)F - zF' & n w G - w^2 G' \\[1mm]
                        F' & G + w G'
       \end{pmatrix}
  -
  \det \begin{pmatrix} mzF - z^2 F' & (n+1)G - w G' \\[1mm]
                        F + zF' & G'
       \end{pmatrix}.
\]
A straightforward computation yields the formula given in \eqref{eqn:25050409}. 
The remaining cases follow by analogous computations.
\end{proof}
\begin{proof}[Proof of Proposition~\ref{prop:R_c}]
Let 
\[
  T = R_{m,n}^k \colon V_m \boxtimes V_n \to V_k
\]
be the symmetry breaking operator. Then it induces an $\mathfrak{sl}_2$-homomorphism
\[
  R_{m,n}^k \otimes \operatorname{pr}_{F \to F'} \colon 
  (V_m \otimes \mathbb{C}^2) \boxtimes (V_n \otimes \mathbb{C}^2) \to V_k \otimes \mathbb{C} \simeq V_k.
\]

We first consider the case $(\delta, \varepsilon) = (+,+)$ with $u = z^{\ell}$ and $v = 1$.

By \eqref{eqn:25050409}, we have
\begin{align*}
  & 2(m+1)(n+1) (R_{m,n}^k \otimes \operatorname{pr}_{F \to F'}) \circ \operatorname{pr}_{++}(H(u,v)) \\
  =\;& R_{m,n}^k \bigl( n \ell (z-w) z^{\ell-1} + (m+n+2) z^{\ell} \bigr).
\end{align*}

Applying the explicit formula \eqref{eqn:25050114} for $R_{m,n}^k$, this becomes

\begin{align*}
  & \frac{(-1)^\ell n!}{\ell!(n-\ell)!} 
    \Biggl(
      \frac{\partial^\ell}{\partial z^\ell} 
      - \frac{(m+1-\ell)\ell}{n} 
        \frac{\partial^\ell}{\partial z^{\ell-1} \partial w} +\dotsb
    \Biggr) \Big|_{w=z} \\
  & \qquad 
    \bigl( n \ell (z-w) z^{\ell-1} + (m+n+2) z^{\ell} \bigr) \\
  =\;& \frac{(-1)^\ell n!}{(n-\ell)!} (\ell+1)(m+n-\ell+2).
\end{align*}

By the definition of $g_{++}(m,n;k)$ with $k = m+n-2\ell$, this simplifies to
\[
  \frac{(-1)^\ell n!}{4 (n-\ell)!} g_{++}(m,n;k).
\]

On the other hand, from \eqref{eqn:R_zl},
\[
  R_{m,n}^k(z^\ell \otimes 1) = \frac{(-1)^\ell n!}{(n-\ell)!}.
\]

Combining these, we obtain
\begin{equation}
\label{eqn:26031906}
  (R_{m,n}^k \otimes \operatorname{pr}_{F \to F'}) \circ \operatorname{pr}_{++}(H(z^\ell,1))
  = \frac{(\ell+1)(m+n-\ell+2)}{2 (m+1)(n+1)} R_{m,n}^k(z^\ell \otimes 1).
\end{equation}

Hence, the constant $c_{++}(\lambda',\lambda'';\lambda''')$ in \eqref{eqn:26031537} is
\[
  c_{++}(\lambda',\lambda'';\lambda''') = \frac{g_{++}(m,n;k)}{8 (m+1)(n+1)}.
\]

Rewriting the right-hand side in terms of infinitesimal characters, as in \eqref{eqn:lmd_mnk}, yields the desired formula for $c_{++}(\lambda_1,\lambda_2;\nu)$.

The other cases $(+,-)$, $(-,+)$, and $(-,-)$ are treated in an analogous manner. 
Thus the explicit formula \eqref{eqn:c_formula} is obtained.
This completes the proof.
\end{proof}

\medskip 
\subsection{Stability Theorem of Multiplicities in Fusion Rule}
\label{subsec:stability_f}
~~~\newline

In this subsection, we formulate a stability theorem 
for multiplicities in fusion rules, 
describing regions in the parameter space 
on which the multiplicity remains constant.

For $\xi', \xi'' \ne0$, 
 we set
\[
   \Lambda(\xi', \xi''):=\Lambda(\xi') \times \Lambda(\xi''), 
\]
where $\Lambda(\xi) \subset \xi + {\mathbb{Z}}$ 
 is defined in \eqref{eqn:dom_xi}.  

Set
\[
  ({\mathbb{Z}}^2)_{\operatorname{even}}
  :=
  \{(k_1, k_2) \in {\mathbb{Z}}^2:
     k_1 \equiv k_2 \operatorname{mod} 2
  \}.  
\]
We define $\Xi$ by
\[
   \Xi \equiv \Xi(\xi', \xi'')
       :=\Lambda(\xi', \xi'') \cap ((\xi', \xi'')+({\mathbb{Z}}^2)_{\operatorname{even}}), 
\]
which serves as the natural parameter lattice compatible with parity conditions in the tensor product.

\begin{definition}
[Fence]
\label{def:fence}
Let $\xi', \xi''\in {\mathbb{C}} \setminus \{0\}$
 and $\mu \in {\mathbb{C}}$.  
For each $\alpha$, $\beta \in \{+, -\}$, 
 we define a subset
 ${\mathcal{D}}_{\alpha\beta} \equiv {\mathcal{D}}_{\alpha\beta}(\xi', \xi'', \mu)$
 of $\Xi \equiv \Xi(\xi', \xi'')$
 by 
\par\noindent
{\bf{Case 1.}}\enspace
If $\alpha \xi' + \beta \xi'' + \mu \not \in 2{\mathbb{Z}}+1$, 
 we set ${\mathcal{D}}_{\alpha\beta}:=\Xi$, 
\par\noindent
{\bf{Case 2.}}\enspace
If $\alpha \xi' + \beta \xi'' + \mu \in 2{\mathbb{Z}}+1$, 
\[
   {\mathcal{D}}_{\alpha\beta}
   =
   \{(\lambda', \lambda'') \in \Xi:
     \operatorname{sgn}(\alpha \lambda' + \beta \lambda''+\mu)
    =
     \operatorname{sgn}(\alpha \xi' + \beta \xi''+\mu)
   \}.  
\]
We set
\[
   {\mathcal{D}}(\xi', \xi'';\mu)
   :=
   \bigcap_{\alpha, \beta\in \{+, -\}}
   {\mathcal{D}}_{\alpha\beta}.  
\]
\end{definition}
The boundaries of the sets ${\mathcal{D}}_{\alpha\beta}$ may be regarded as 
``fences'' in the parameter space across which the multiplicity may jump; 
the region enclosed by these fences defines a domain where no such jumps occur.

\begin{example}
\label{ex:fence_mnk}
Fix $\mu \in {\mathbb{N}}_+$.  
Then there are five possibilities
 of the region ${\mathcal{D}}(\xi', \xi'';\mu)$
 for $\xi', \xi'' \in {\mathbb{N}}_+$, 
 and it gives a decomposition
 of ${\mathbb{N}}_+^2$:
\begin{align*}
 {\mathbb{N}}_+^2=\,&
({\mathbb{N}}_+^2)_{\operatorname{even}} \amalg ({\mathbb{N}}_+^2)_{\operatorname{odd}}
\\
 =\,&({\mathbb{N}}_+^2)_{\operatorname{even}}
  \amalg
  \bigcup_{s \in \{\uparrow, \nearrow, \rightarrow, \swarrow \}}
  ({\mathbb{N}}_+^2)_{\operatorname{odd}}^s, 
\end{align*}
where
\begin{align*}
 ({\mathbb{N}}_+^2)_{\operatorname{even}}
  :=\,&\{(\lambda',\lambda''):
          \lambda',\lambda'' \in {\mathbb{N}}_+,\,
          \lambda'+\lambda''+\lambda''' \in 2{\mathbb{Z}}\}, 
\\
  ({\mathbb{N}}_+^2)_{\operatorname{odd}}
  :=\,&\{(\lambda',\lambda''):
          \lambda',\lambda'' \in {\mathbb{N}}_+,\,
          \lambda'+\lambda''+\lambda''' \in 2{\mathbb{Z}}+1\}, 
\\
 ({\mathbb{N}}_+^2)_{\operatorname{odd}}^{\uparrow}
  :=\,&\{(\lambda',\lambda'')\in  ({\mathbb{N}}_+^2)_{\operatorname{odd}}:
          \lambda''- \lambda' \ge \lambda'''+1\}, 
\\
 ({\mathbb{N}}_+^2)_{\operatorname{odd}}^{\nearrow}
  :=\,&\{(\lambda',\lambda'')\in  ({\mathbb{N}}_+^2)_{\operatorname{odd}}:
        |\lambda'-\lambda''|\le \lambda'''-1, \lambda'+\lambda'' \ge \lambda'''+1\}, 
\\
 ({\mathbb{N}}_+^2)_{\operatorname{odd}}^{\rightarrow}
  :=\,&\{(\lambda',\lambda'')\in  ({\mathbb{N}}_+^2)_{\operatorname{odd}}:
          \lambda'- \lambda'' \ge \lambda'''+1\}, 
\\
 ({\mathbb{N}}_+^2)_{\operatorname{odd}}^{\swarrow}
  :=\,&\{(\lambda',\lambda'')\in  ({\mathbb{N}}_+^2)_{\operatorname{odd}}:
        |\lambda'-\lambda''| \le \lambda'''-1, \lambda'+\lambda''\le \lambda'''-1\}.  
\end{align*}
This decomposition illustrates how the stability region 
is divided into finitely many chambers.
\end{example}

As in the stability theorem for the branching corresponding to $\mathfrak{sl}(2, \mathbb C) \downarrow \mathfrak{gl}(1, \mathbb C)$ in Theorem~\ref{thm:26031818}, 
we deduce the following stability result 
as a direct consequence of Theorem~\ref{thm:26031537}.

\begin{theorem}[Stability of Multiplicities]
\label{thm:26032032}
Let $G$ be a simple Lie group whose complexified Lie algebra satisfies
\[
\mathfrak g_{\mathbb C}=\mathfrak{sl}(2,\mathbb C).
\]
Let
\[
\{\Pi'_{\lambda'} : \lambda' \in \Lambda(\xi')\},
\qquad
\{\Pi''_{\lambda''} : \lambda'' \in \Lambda(\xi'')\}
\subset \mathcal M(G)
\]
be reduced coherent families of representations passing through
$\Pi'=\Pi'_{\xi'}$ and $\Pi''=\Pi''_{\xi''}$, respectively, where
$\xi'$ and $\xi''$ are nonsingular infinitesimal characters.
Let $\Pi_{\lambda'''} \in \mathcal M(G)$ be a representation with
infinitesimal character $\lambda'''$.

Then the multiplicity
\[
\bigl[\Pi'_{\lambda'} \boxtimes \Pi''_{\lambda''} : \Pi_{\lambda'''}\bigr]
\]
is constant for all parameters
\[
(\lambda',\lambda'') \in \mathcal D(\xi',\xi'';\lambda''').
\]
In particular, the multiplicity is locally constant within the region
$\mathcal D(\xi',\xi'';\lambda''')$ of the parameter space, and may change
only when crossing its boundary.
\end{theorem}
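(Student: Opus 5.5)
The plan is to show that the multiplicity does not change under a single lattice step inside $\mathcal D(\xi',\xi'';\lambda''')$, and then to connect any two points of the region by such steps. Concretely, suppose $(\lambda',\lambda'')$ and $(\lambda'+\delta,\lambda''+\varepsilon)$ both lie in $\mathcal D:=\mathcal D(\xi',\xi'';\lambda''')$, with $\delta,\varepsilon\in\{+,-\}$. These diagonal steps are the ones that preserve the parity lattice $(\xi',\xi'')+(\mathbb Z^2)_{\operatorname{even}}$. I would construct an injective linear map
\[
\operatorname{Hom}_G(\Pi'_{\lambda'}\boxtimes\Pi''_{\lambda''},\Pi_{\lambda'''})\longrightarrow \operatorname{Hom}_G(\Pi'_{\lambda'+\delta}\boxtimes\Pi''_{\lambda''+\varepsilon},\Pi_{\lambda'''}),
\]
and a map in the reverse direction, so that the two dimensions agree.

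For the forward map, take a symmetry breaking operator $T$. By the reduced coherent family property \eqref{eqn:disc_coh}, $\Pi'_{\lambda'+\delta}\simeq P_{\lambda'+\delta}(\Pi'_{\lambda'}\otimes\mathbb C^2)$, and similarly for $\Pi''$. Define $\widetilde T$ as the restriction of $T\otimes\operatorname{pr}_{F\to F'}$ to $P_{\lambda'+\delta}(\Pi'_{\lambda'}\otimes\mathbb C^2)\boxtimes P_{\lambda''+\varepsilon}(\Pi''_{\lambda''}\otimes\mathbb C^2)$. This is $G$-equivariant, since $\operatorname{pr}_{F\to F'}$ is the projection onto the trivial summand of $\mathbb C^2\otimes\mathbb C^2$. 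By Theorem~\ref{thm:26031537}, we get $\widetilde T(\operatorname{pr}_{\delta\varepsilon}H(u,v))=c_{\delta\varepsilon}(\lambda',\lambda'';\lambda''')\,T(u\otimes v)$. So if $c_{\delta\varepsilon}\neq0$, then $T\neq0$ implies $\widetilde T\neq0$, and $T\mapsto\widetilde T$ is injective by linearity.

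Before applying Theorem~\ref{thm:26031537} I must check its hypothesis that the primary components are genuine eigenspaces. For $\mathfrak{sl}_2$ the generalized eigenvalues $(\lambda\pm1)^2-1$ are distinct when $\lambda\neq0$. I expect the scalar action to follow from the coherent family identification, since $P_{\lambda\pm1}(\Pi_\lambda\otimes\mathbb C^2)\simeq\Pi_{\lambda\pm1}$ has an infinitesimal character. The reverse map uses the same construction starting from $(\lambda'+\delta,\lambda''+\varepsilon)$ with signs $(-\delta,-\varepsilon)$, together with $P_{\lambda'}(\Pi'_{\lambda'+\delta}\otimes\mathbb C^2)\simeq\Pi'_{\lambda'}$. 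It requires $c_{-\delta,-\varepsilon}(\lambda'+\delta,\lambda''+\varepsilon;\lambda''')\neq0$. By \eqref{eqn:c_formula}, the numerator of this constant is
\[
(-\delta\lambda'-\varepsilon\lambda''-1+\lambda''')(-\delta\lambda'-\varepsilon\lambda''-1-\lambda''').
\]
Up to sign, this is again a product of the form $(\delta\lambda'+\varepsilon\lambda''+1\pm\lambda''')$, the same factors that appear in $c_{\delta\varepsilon}(\lambda',\lambda'';\lambda''')$. Hence injectivity in both directions is governed by the nonvanishing of the linear forms $\alpha\lambda'+\beta\lambda''+1+\mu$ with $\mu=\pm\lambda'''$.

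The key step, and the one I expect to be the main obstacle, is to show that these forms never vanish along a step that stays inside $\mathcal D$. Rewriting $\delta\lambda'+\varepsilon\lambda''+1\pm\lambda'''=0$ in terms of the fence forms $\alpha\lambda'+\beta\lambda''+\lambda'''$ with $(\alpha,\beta)=\pm(\delta,\varepsilon)$: along the step, the form $\alpha\lambda'+\beta\lambda''+\lambda'''$ moves by $\pm2$, from a value $t$ to $t\pm2$, and the bad case is exactly when it takes the values $\mp1$ at the two ends, i.e. when it crosses $0$. In Case~1 of Definition~\ref{def:fence}, $\alpha\xi'+\beta\xi''+\lambda'''\notin2\mathbb Z+1$. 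Since $\lambda'-\xi'$ and $\lambda''-\xi''$ are integers of equal parity on $\Xi$, the form never equals $\pm1$ there, so no vanishing occurs. In Case~2, the form is an odd integer throughout $\Xi$, and its sign is fixed on $\mathcal D_{\alpha\beta}$, so it cannot jump from $-1$ to $+1$ within $\mathcal D$. The care needed is in matching the four sign combinations $(\pm\delta,\pm\varepsilon)$ and $\pm\lambda'''$ exactly to the four fences $\mathcal D_{\alpha\beta}$; I would check this case by case.

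Finally, I need connectivity: any two points of $\mathcal D$ must be joined by a chain of diagonal steps that stays in $\mathcal D$. This holds because $\mathcal D$ is the intersection of $\Xi$ with half-planes bounded by lines of slope $\pm1$, and moving along the diagonal directions $(\pm1,\pm1)$ keeps us in a convex lattice region. One also has to handle the boundary of $\Lambda(\xi)$ when $\xi\in\mathbb Z$, where the values $\lambda=0$ are excluded. The fence forms there are the relevant ones, and I would argue connectivity by moving first along one diagonal direction and then along the other, checking that monotonicity of each linear form keeps the path inside every half-plane.
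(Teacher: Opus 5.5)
Your proposal is correct and is essentially the argument the paper has in mind. The paper only calls the theorem a direct consequence of the translation identity \eqref{eqn:26031537}, and you supply the details it leaves implicit: the injections $T\mapsto\widetilde T$ in both directions, the fact that $c_{\delta\varepsilon}(\lambda',\lambda'';\lambda''')$ and $c_{-\delta,-\varepsilon}(\lambda'+\delta,\lambda''+\varepsilon;\lambda''')$ vanish exactly when a step crosses a fence, and the connectivity of $\mathcal D(\xi',\xi'';\lambda''')$ under diagonal steps.

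One small repair is needed. Do not justify the genuine-eigenspace hypothesis via \eqref{eqn:disc_coh}, which says nothing about $P_0(\Pi_{\pm1}\otimes\mathbb C^2)$ at the edge of $\Lambda(\xi)$; instead note that it is automatic, since on $\Pi_\lambda\otimes\mathbb C^2$ the Casimir satisfies $(\Omega-\lambda^2)^2=4\lambda^2$, whose roots are distinct for $\lambda\neq0$.
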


In particular, multiplicities are constant inside the region ${\mathcal{D}}(\xi', \xi'';\lambda''')$ 
and may change only when crossing its boundaries.

\medskip 
\subsection{Examples for Stable Multiplicities in Fusion Rule}
\label{subsec:stability_ex}
~~~\newline
We present illustrative examples of the stability of multiplicities discussed in
Theorem~\ref{thm:26031818}.
\begin{example}
\label{ex:fence_Pieri}
We consider the multiplicity 
\[
[V_m \otimes V_n:V_k]
\]
 for the tensor product of finite-dimensional representations.  
The infinitesimal characters
 of $V_m$, $V_n$, $V_k$ are given 
by $\lambda'=m+1$, $\lambda''=n+1$, 
and $\lambda'''=k+1$, respectively.

Fixing $k$ and applying Theorem~\ref{thm:26032032} 
 together with Example~\ref{ex:fence_mnk}, 
 the multiplicity $[V_m \otimes V_n:V_k]$ is constant on each of the following $(m,n)$-parameter regions:
\begin{align*}
&\bullet \enspace
m+n \not \equiv k \operatorname{mod}2, 
\\
&\bullet \enspace
m+n \equiv k \operatorname{mod}2,\, n-m \ge k+2, 
\\
&\bullet \enspace
m+n \equiv k \operatorname{mod}2,\, |m-n| \le k,\, m+n \ge k, 
\\
&\bullet \enspace
m+n \equiv k \operatorname{mod}2,\, m-n \ge k+2,
\\
&\bullet \enspace
m+n \equiv k \operatorname{mod}2,\, |m-n| \le k,\, |m+n| \le k-2.  
\end{align*}
This fits with the Pieri rule asserting that
\[
  [V_m \otimes V_n:V_k]=1
\,\,\Leftrightarrow\,\,
  m+n \equiv k \operatorname{mod}2,\, |m-n| \le k,\, m+n \ge k.  
\]
\end{example}

\begin{example}
\label{ex:26032029}
Let $\mathfrak g_{\mathbb C}=\mathfrak{sl}(2,\mathbb C)$, and let
$\mathfrak b_{\mathbb C}=\mathbb{R}H+\mathbb{R}X\subset\mathfrak g_{\mathbb C}$
be a Borel subalgebra.
For $\mu\in\mathbb C$, let
\[
M(\mu):=
U(\mathfrak g_{\mathbb C})
\otimes_{U(\mathfrak b_{\mathbb C})}
\mathbb C_\mu
\]
denote the corresponding Verma module.
In our convention, $M(\mu)$ has infinitesimal character $\lambda=\mu+1$,
and admits the finite-dimensional irreducible quotient $V_m$
when $\mu=m\in\mathbb N$.

We consider the multiplicity
\[
m(\mu',\mu'';\mu''')
:=
\dim\operatorname{Hom}_{\mathfrak g_{\mathbb C}}
\bigl(
M(\mu'''),\,
M(\mu')\otimes M(\mu'')
\bigr)
\]
for parameters $\mu',\mu'',\mu'''\in\mathbb C$.
For generic triples $(\mu',\mu'',\mu''')\in\mathbb C^3$,
this multiplicity is either $0$ or $1$;
see \cite[Theorem~5.1]{K12} for the multiplicity-free theorem
in the general setting of restricting parabolic Verma modules
with respect to reductive symmetric pairs.

A remarkable multiplicity-two phenomenon was discovered in \cite{KP16}:
\[
m(\mu',\mu'';\mu''')=2
\]
if and only if
\[
\mu',\mu'',\mu'''\in\mathbb Z,\quad
\mu'+\mu''-\mu'''\in 2\mathbb N,\quad
\mu'+\mu''+\mu'''\le -2,\quad
\mu'-\mu''\le -\mu'''-2.
\]
In terms of infinitesimal characters
\[
\lambda'=\mu'+1,\qquad
\lambda''=\mu''+1,\qquad
\lambda'''=\mu'''+1,
\]
these conditions are equivalent to
\[
\lambda',\lambda'',\lambda'''\in\mathbb Z,\qquad
\lambda'+\lambda''-\lambda'''\in 2\mathbb N+1,
\]
together with
\[
\lambda'+\lambda''+\lambda'''\le 1,
\qquad
\lambda'-\lambda''\le -\lambda'''-1.
\]
Once again, these inequalities describe precisely one of the regions
identified in Theorem~\ref{thm:26032032}
on which the multiplicity is stable;
see also Example~\ref{ex:fence_mnk}.
\end{example}

\section{
Symmetry Breaking for ${\mathfrak{o}}(4, \mathbb C) \downarrow {\mathfrak{o}}(3, \mathbb C)$}
\label{sec:o4_o3}
Since the results in Theorem~\ref{thm:26031537} depend only on the pair of complexified Lie algebras 
$({\mathfrak{g}}_{\mathbb{C}}, {\mathfrak{g}}'_{\mathbb{C}})$, 
they admit an equivalent reformulation via the isomorphism
\begin{equation}
\label{eqn:sl2_o3}
  ({\mathfrak{sl}}(2, {\mathbb{C}}) \oplus {\mathfrak{sl}}(2,{\mathbb{C}}), 
   \operatorname{diag}({\mathfrak{sl}}(2, {\mathbb{C}})))
  \simeq
  ({\mathfrak{o}}(4, {\mathbb{C}}), {\mathfrak{o}}(3, {\mathbb{C}})).  
\end{equation}
Fix a standard basis of a Cartan subalgebra of ${\mathfrak{o}}(n,{\mathbb{C}})$ for $n=3,4$.
The Harish-Chandra isomorphisms are then given by
\begin{align*}
  \operatorname{Hom}_{{\mathbb{C}}\operatorname{-alg}}
  ({\mathfrak{Z}}({\mathfrak{o}}(4,{\mathbb{C}})), {\mathbb{C}})
  \simeq\,
  & {\mathbb{C}}^2/W(D_2), 
\\
  \operatorname{Hom}_{{\mathbb{C}}\operatorname{-alg}}
  ({\mathfrak{Z}}({\mathfrak{o}}(3,{\mathbb{C}})), {\mathbb{C}})
  \simeq\,
  & {\mathbb{C}}/W(B_1).  
\end{align*}

Let $F:={\mathbb{C}}^4$ denote
 the natural representation of ${\mathfrak{g}}_{\mathbb{C}}={\mathfrak{o}}(4, {\mathbb{C}})$.  
As a representation of ${\mathfrak{g}}_{\mathbb{C}}'$, 
 it decomposes
 as ${\mathbb{C}}^4={\mathbb{C}} \oplus {\mathbb{C}}^3 =:F'\oplus F''$.  
We fix a generator
 $f_0 \in F'$
 and identify $F'={\mathbb{C}}f_0$ with ${\mathbb{C}}$.  
Let 
\[
  \operatorname{pr}_{F} \colon F \to F' \simeq {\mathbb{C}}
\]
 denote the projection.  

\begin{theorem}
\label{thm:26032009}
Let $(G,G')$ be a pair of real reductive Lie groups whose complexified Lie algebras satisfy
\[
(\mathfrak g_{\mathbb C},\mathfrak g'_{\mathbb C})
\simeq
(\mathfrak{o}(4,\mathbb{C}),\mathfrak{o}(3,\mathbb{C})).
\]
Then there exist rational functions
\[
C_{i,\delta}(\lambda_1,\lambda_2;\nu),
\qquad
i\in\{1,2\},\ \delta\in\{+,-\},
\]
depending only on the parameters $\lambda_1,\lambda_2,$ and $\nu$, such that the following holds.

Let $\Pi_{\lambda}\in\mathcal{M}(G)$ be a representation with infinitesimal characters
$(\lambda_1,\lambda_2)$, and let $\pi_{\nu}\in\mathcal{M}(G')$ have infinitesimal character
$\nu\in\mathbb{C}$.
Assume that $\lambda_1\neq \pm\lambda_2$, and that the primary components of
$\Pi_{\lambda}\otimes\mathbb C^{4}$ are genuine eigenspaces of ${\mathfrak Z}(G)=U(\mathfrak{g}_\mathbb C)^G$.

Let
\[
T:\Pi_{\lambda}\longrightarrow\pi_{\nu}
\]
be a symmetry breaking operator.
Fix $i\in\{1,2\}$ and $\delta\in\{+,-\}$.
Then, for every $u\in\Pi_{\lambda}$, we have
\[
(T\otimes\operatorname{pr}_{F\to F'})
\circ
\operatorname{pr}_{\lambda\to\lambda+\delta e_i}(u\otimes f_0)
=
C_{i,\delta}(\lambda_1,\lambda_2;\nu)\,Tu.
\]

Moreover, the rational functions $C_{i,\delta}(\lambda_1,\lambda_2;\nu)$ are
\emph{universal} in the sense that they depend only on the infinitesimal characters
$(\lambda_1,\lambda_2)$ and $\nu$, and are independent of the choice of real forms
$(G,G')$ as well as of the representations $\Pi_{\lambda}$ and $\pi_{\nu}$.
Explicitly, they are given by
\begin{equation}
\label{eqn:26032009}
C_{i,\delta}(\lambda_1,\lambda_2;\nu)
=
\frac{
\bigl(\lambda_i+\nu+\tfrac12\delta\bigr)
\bigl(\lambda_i-\nu+\tfrac12\delta\bigr)
}{
2(\lambda_1^2-\lambda_2^2)
}.
\end{equation}
\end{theorem}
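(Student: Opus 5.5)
The plan is to deduce Theorem~\ref{thm:26032009} from Theorem~\ref{thm:26031537} by transporting everything through the isomorphism \eqref{eqn:sl2_o3}. Write $\mathfrak{o}(4,\mathbb{C})\simeq \mathfrak{sl}(2,\mathbb{C})\oplus\mathfrak{sl}(2,\mathbb{C})$. Under this isomorphism the natural representation $F=\mathbb{C}^4$ becomes $\mathbb{C}^2\boxtimes\mathbb{C}^2$, and the $\mathfrak{o}(3,\mathbb{C})$-invariant line $F'=\mathbb{C}f_0$ becomes the antisymmetric line \eqref{eqn:F_ef}. So I will normalize $f_0:=e_+\otimes f_--e_-\otimes f_+$, which makes $u\otimes f_0$ literally the element $H(u,\cdot)$ of \eqref{eqn:H_uv}. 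On infinitesimal characters, the $W(D_2)$-orbit of $(\lambda_1,\lambda_2)$ corresponds to the pair of $\mathfrak{sl}_2$-parameters
\[
\lambda'=\lambda_1+\lambda_2,\qquad \lambda''=\lambda_1-\lambda_2,
\]
and the $\mathfrak{o}(3)$-parameter $\nu$ corresponds to $\lambda'''=2\nu$. I will confirm this scaling by comparing Casimirs on $\mathbb{C}^4$ and $\mathbb{C}^3$. Two consequences follow. First, the hypothesis $\lambda_1\neq\pm\lambda_2$ is exactly $\lambda',\lambda''\neq0$. Second, the shifts $\lambda\mapsto\lambda+\delta e_1$ and $\lambda\mapsto\lambda+\delta e_2$ correspond to $(\lambda',\lambda'')\mapsto(\lambda'+\delta,\lambda''+\delta)$ and $(\lambda'+\delta,\lambda''-\delta)$ respectively. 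Hence $\operatorname{pr}_{\lambda\to\lambda+\delta e_1}=\operatorname{pr}_{\delta\delta}$ and $\operatorname{pr}_{\lambda\to\lambda+\delta e_2}=\operatorname{pr}_{\delta,-\delta}$.

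The one genuine gap is that $\Pi_\lambda$ is a module over $\mathfrak{sl}_2\oplus\mathfrak{sl}_2$ but need not be an outer tensor product $\Pi'\boxtimes\Pi''$, whereas Theorem~\ref{thm:26031537} is phrased for outer tensor products. I will redo its universality step directly, as in the proofs of Theorems~\ref{thm:260314_mahoroba} and \ref{thm:26030325}.
\begin{itemize}
\item Each projection $\operatorname{pr}_{\delta\varepsilon}$ on $\Pi_\lambda\otimes\mathbb{C}^4$ is a polynomial in the two Casimirs $\Omega_1,\Omega_2$ of the two factors; it is an interpolation polynomial, well defined because $\lambda',\lambda''\neq0$ and the primary components are genuine eigenspaces.
\item Expanding $\Omega_j(u\otimes f_0)$ by the Leibniz rule, as in Lemma~\ref{lem:26031430}, and then applying $\operatorname{id}\otimes\operatorname{pr}_{F\to F'}$, turns everything into expressions $Zu\otimes f_0$. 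Here $Z$ is built from the Casimirs of $\mathfrak{g}_{\mathbb{C}}$ and of the diagonal $\mathfrak{g}'_{\mathbb{C}}$ together with scalars.
\item Then $T(Zu)$ equals a polynomial in $(\lambda_1,\lambda_2,\nu)$ times $Tu$, since $T$ intertwines $\mathfrak{g}'_{\mathbb{C}}$ and $\pi_\nu$ has infinitesimal character $\nu$.
\end{itemize}
This gives a universal rational function $C_{i,\delta}$, independent of the real form and of the representations.

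It remains to identify $C_{i,\delta}$. I will evaluate on the Zariski dense set of finite-dimensional modules $V_m\boxtimes V_n$ of $SU(2)\times SU(2)$, which are honest outer tensor products. On these Proposition~\ref{prop:25050220} applies and gives $C_{1,\delta}=c_{\delta\delta}$ and $C_{2,\delta}=c_{\delta,-\delta}$. Substituting into \eqref{eqn:c_formula}, for $i=1$ the numerator is
\[
(2\delta\lambda_1+1+2\nu)(2\delta\lambda_1+1-2\nu)=4\bigl(\lambda_1+\tfrac12\delta+\nu\bigr)\bigl(\lambda_1+\tfrac12\delta-\nu\bigr),
\]
and the denominator is $8\lambda'\lambda''=8(\lambda_1^2-\lambda_2^2)$, which yields \eqref{eqn:26032009}. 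For $i=2$ the same computation goes through with $\lambda_2$ in place of $\lambda_1$.

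I expect the main obstacle to be consistency of normalizations rather than structure. This means pinning down the Harish-Chandra parameters for $\mathfrak{o}(4)$ and $\mathfrak{o}(3)$, in particular whether $\lambda'''=2\nu$ or $\nu$, and tracking the factor $\tfrac12$ in \eqref{eqn:26022608} together with the scaling of $f_0$. I will check these first on the trivial representation of $SO(4)$ against $V_0\boxtimes V_0$.
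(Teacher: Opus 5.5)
Your route is the paper's. Its proof consists exactly of the dictionary $\lambda'=\lambda_1+\lambda_2$, $\lambda''=\lambda_1-\lambda_2$, $\lambda'''=2\nu$, the matching $\operatorname{pr}_{\lambda\to\lambda+\delta e_1}=\operatorname{pr}_{\delta\delta}$ and $\operatorname{pr}_{\lambda\to\lambda+\delta e_2}=\operatorname{pr}_{\delta,-\delta}$, and $C_{i,\delta}=c_{\delta\varepsilon}$. Your re-derivation of universality for modules that are not outer tensor products (e.g.\ for $G=SO(3,1)$) fills a point the paper passes over. It works because $U(\mathfrak{g}_{\mathbb{C}})^{\mathfrak{g}'_{\mathbb{C}}}$ is generated by $\Omega_1$, $\Omega_2$ and the diagonal Casimir.

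The gap is the final substitution for $i=2$. You take the values from Proposition~\ref{prop:25050220} and then plug in \eqref{eqn:c_formula}, but these two disagree. At $(\lambda',\lambda'';\lambda''')=(m+1,n+1;k+1)$ the numerator of \eqref{eqn:c_formula} equals $\delta\varepsilon\,g_{\delta\varepsilon}(m,n;k)$. For instance,
\[
(\lambda'-\lambda''+1+\lambda''')(\lambda'-\lambda''+1-\lambda''')=-(m-n+k+2)(n-m+k)=-g_{+-}(m,n;k).
\]
The $g$'s are the correct ones. Since $\sum_{\delta,\varepsilon}\operatorname{pr}_{\delta\varepsilon}=\operatorname{id}$ and $\operatorname{pr}_{F\to F'}$ fixes $F'$, the four coefficients must sum to $1$. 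Indeed $\sum g_{\delta\varepsilon}=8(m+1)(n+1)$, whereas the four values of \eqref{eqn:c_formula} sum to $(\lambda'^2+\lambda''^2+1-\lambda'''^2)/(2\lambda'\lambda'')$. So the true function is $c_{\delta\varepsilon}=\bigl((\delta\lambda'+\varepsilon\lambda''+1)^2-\lambda'''^2\bigr)/(8\,\delta\varepsilon\,\lambda'\lambda'')$. Carrying out your own plan therefore gives $C_{1,\delta}$ as stated, but
\[
C_{2,\delta}(\lambda_1,\lambda_2;\nu)=\frac{(\lambda_2+\nu+\tfrac12\delta)(\lambda_2-\nu+\tfrac12\delta)}{2(\lambda_2^2-\lambda_1^2)},
\]
which is the negative of \eqref{eqn:26032009}.

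So for $i=2$ the formula you set out to prove is false, and no argument can establish it. Here is a direct check for $SO(4)\supset SO(3)$. Take $\Pi=\mathbb{C}^4$ (so $\lambda=(2,0)$), $\pi=\mathbb{C}^3$ ($\nu=\tfrac32$), $T$ the orthogonal projection, and $u=f_1\perp f_0$.
\begin{itemize}
\item The symmetric part of $f_1\otimes f_0$ lies in $S^2_0\mathbb{C}^4$ (character $\lambda+e_1$) and contributes $\tfrac12 f_1$.
\item There is no trivial component.
\item The antisymmetric part splits evenly between $\Lambda^2_{\pm}\mathbb{C}^4$ (characters $\lambda\pm e_2$), each contributing $+\tfrac14 f_1$.
\end{itemize}
Formula \eqref{eqn:26032009} instead predicts $C_{2,\pm}=-\tfrac14$.

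Equivalently, \eqref{eqn:26032009} is not compatible with $W(D_2)$. Replacing the representative $(\lambda_1,\lambda_2)$ by $(\lambda_2,\lambda_1)$ must interchange $C_{1,\delta}$ and $C_{2,\delta}$, and this forces the denominator $2(\lambda_i^2-\lambda_{3-i}^2)$. Your planned normalization check on the trivial representation cannot detect this, since there $C_{2,\pm}=0$.

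With the denominator corrected to $2(\lambda_i^2-\lambda_{3-i}^2)$, your argument goes through as written. The same sign error sits in \eqref{eqn:c_formula} for $\delta\varepsilon=-1$, and the paper's one-line proof inherits it.
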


\begin{proof}
This is a reformulation of Theorem~\ref{thm:26031537}, 
 and we only outline the proof.

Via the isomorphism \eqref{eqn:sl2_o3}, 
 the infinitesimal characters
 $(\lambda', \lambda'')$ and $\lambda'''$
 for $({\mathfrak{sl}}_2 \oplus {\mathfrak{sl}}_2, \operatorname{diag}({\mathfrak{sl}}_2))$
 correspond to $(\lambda_1,\lambda_2)$ and $\nu$ by
\[
   \lambda'=\lambda_1+\lambda_2,\quad
   \lambda''=\lambda_1-\lambda_2,\quad
   \lambda'''=2\nu.  
\]
Moreover, the projections $\operatorname{pr}_{\delta\varepsilon}$
 correspond to $\operatorname{pr}_{i, \kappa}$
 according to the following correspondence:
\begin{table}[H]
\begin{center}
\begin{tabular}{c|cccc}
$\delta \varepsilon$
&$++$\,\,
&$+-$\,\,
&$-+$\,\,
&$--$
\\
\hline
$(i, \kappa)$
&$1+$
&$2+$
&$2-$
&$1-$
\\
\end{tabular}
\end{center}
\end{table}

Hence, 
 the rational functions
 $C_{i, \delta}(\lambda_1,\lambda_2;\nu)$
 in Theorem~\ref{thm:26032009} are obtained from $c_{\delta \varepsilon}(\lambda', \lambda'';\lambda''')$
 in Theorem~\ref{thm:26031537}
 via the relation
\[
   C_{i, \delta}(\lambda_1,\lambda_2;\nu)
   =
   c_{\delta \varepsilon}(\lambda', \lambda'';\lambda''').  
\]
This completes the proof of the theorem.
\end{proof}
This reformulation highlights the intrinsic nature of the translation principle,
showing that it depends only on the underlying complexified Lie algebras
and not on the choice of real forms.

\end{document}